\def\Dy#1{\Frac{\partial #1}{\partial y}}
\def\Dy_1y_1#1{\Frac{\partial^2 #1}{\partial y_1^2}}
\newtheorem{Theorem}{Theorem}[part]
\newtheorem{Definition}{Definition}[part]
\newtheorem{Proposition}{Proposition}[part]
\newtheorem{Assumption}{Assumption}[part]
\newtheorem{Lemma}{Lemma}[part]
\newtheorem{Remark}{Remark}[part]
\makeatletter \@addtoreset{equation}{section}
\def \Int{\displaystyle\int}
\def \Frac{\displaystyle\frac}
\def \Sup{\displaystyle\sup}
\def \be{\begin{eqnarray}}
\def \ee{\end{eqnarray}}
\def \b*{\begin{eqnarray*}}
\def \e*{\end{eqnarray*}}
\def \E{\mathbb{E}}
\def \L{\mathbb{L}}
\def \N{\mathbb{N}}
\def \P{\mathbb{P}}
\def \R{\mathbb{R}}
\def \W{\overleftarrow{W}}
\def \B{\overleftarrow{B}}
\def \[{[\,\!\![}
\def \]{]\,\!\!]}
\def \1{{\bf 1}}
\def \ep{\hbox{ }\hfill$\Box$}
\def\Fc{{\cal F}}
\def\Hc{{\cal H}}
\def\Lc{{\cal L}}
\def\Nc{{\cal N}}
\def\Lc{{\cal L}}
\def\Sc{{\cal S}}
\begin{document}

\begin{frontmatter}

\title{ Backward Doubly SDEs and  Semilinear Stochastic PDEs in a convex domain}
\runtitle{BDSDEs in a convex domain}


\author{\fnms{Anis} \snm{Matoussi}\ead[label=e1]{anis.matoussi@univ-lemans.fr}\thanksref{t2}}
\thankstext{t2}{The first author
 was  partially supported by  chaire Risques Financiers de la fondation du risque, CMAP-\'Ecole Polytechniques, Palaiseau-France  }
\address{University of Maine \\
Risk and Insurance Institute of Le Mans \\
Laboratoire  Manceau de Math\'ematiques\\ Avenue Olivier Messiaen\\
 \printead{e1}}
  \author{\fnms{Wissal} \snm{Sabbagh}\corref{}\ead[label=e2]{wissal.sabbagh@univ-lemans.fr}}
\address{ University of Maine \\
Risk and Insurance Institute of Le Mans \\
Laboratoire Manceau de Math\'ematiques\\ Avenue Olivier Messiaen \\ \printead{e2}}
 \affiliation{University of Le Mans}
 
\author{\fnms{Tusheng} \snm{Zhang}\corref{}\ead[label=e3]{tusheng.zhang@manchester.ac.uk}}
\address{ School of Mathematics, University of Manchester, \\
Oxford Road, Manchester M13 9PL, England, UK \\
 \printead{e3}}
 \affiliation{University of Manchester}


\runauthor{Matoussi, Sabbagh and Zhang }

\begin{abstract} 
This paper presents existence and uniqueness results for reflected  backward doubly stochastic differential equations (in short RBDSDEs) in a convex domain  $D$ without any regularity conditions on the boundary. Moreover,  using a stochastic flow approach a probabilistic interpretation for a system  of reflected SPDEs in a domain is given via such RBDSDEs. The solution is expressed as a pair  $(u,\nu) $ where 
$u$ is a predictable continuous process which takes values in a  Sobolev space and $\nu$ is a random regular measure. The bounded variation process $K $, the component of the  solution of the reflected BDSDE, controls the set when   $ u$ reaches the boundary of $D$. This bounded variation process determines the measure $\nu$ from a particular relation by using the inverse of the flow associated to the the diffusion operator.

\end{abstract}
\begin{keyword}
\kwd{Stochastic Partial Differential Equation, Reflected Backward Doubly Stochastic Differential Equation}
\kwd{Skorohod Problem}
\kwd{Convex Domains}
\kwd{Stochastic Flow }
\kwd{Flow of Diffeomorphisms}
\kwd{Regular Measure}
\end{keyword}

\begin{keyword}[class=AMS]
\kwd[Primary ]{60H15}
\kwd{60G46}
\kwd[; secondary ]{35H60}
\end{keyword}

\end{frontmatter}

\section{Introduction}
Our main interest is  the following  system of semilinear stochastic PDE with value in  $\R^k$,
\small
\begin{equation}
\begin{split}
\label{SPDE1}   du_t (x) + [\mathcal {L}u_t (x) +f_t(x,u_t (x),\nabla u_t\sigma (x))] dt +  h_t(x,u_t(x),\nabla u_t\sigma(x))\cdot d\W_t  = 0, \,
\end{split}
\end{equation}
over the time interval $[0,T]$. The final condition is given by $u_T = \Phi$, $f,$
$h$ are  non-linear random functions  and  $\mathcal L$ is the second order differential operator  which is defined
 component by component by
 \begin{equation}\label{operator conv}
 \begin{array}{lll}
 {\mathcal L}\varphi(x)&=&\displaystyle\sum_{i=1}^{d}b^i(x)\frac{\partial}{\partial
 x_i}\varphi(x)+\frac{1}{2}\sum_{i,j=1}^{d}a^{ij}(x)\frac{\partial^2}{\partial
 x_i\partial x_j}\varphi(x).
\end{array}
\end{equation}
The integral term with $d\W_t$
refers to the backward stochastic integral with respect to a $d$-dimensional Brownian motion on
$\big(\Omega, \mathcal{F},\mathbb{P}, (W_t)_{t\geq 0} \big)$. We use the backward notation  because in the proof we will employ the backwards doubly stochastic framework introduced by Pardoux and Peng \cite{pp1994}.\\
 Such SPDEs appear in various applications like pathwise stochastic control problems, the Zakai equations in filtering and stochastic control with partial observations.  It is well known now that  BSDEs give a probabilistic interpretation for the solution of a class of semi-linear PDEs. 
By introducing in  standard BSDEs a second nonlinear term driven by an external noise, we obtain Backward Doubly SDEs (BDSDEs in short) \cite{pp1994} (see also \cite{BM01}, \cite{MS02}), which can be seen as Feynman-Kac representation of SPDEs and provide a powerful tool for probabilistic  numerical  schemes \cite{matouetal13}  for such SPDEs. Several generalizations to investigate more general nonlinear SPDEs have been developed following different approaches of the notion of weak solutions, namely,  Sobolev's solutions  \cite{DS04, GR00, Krylov99, SSV03, Walsh},  and  stochastic viscosity  solutions \cite{lion:soug:98, lion:soug:00, lion:soug:01, buck:ma:10a, buck:ma:10b}. \\

 Given  a  convex domain $ D$ in $ \R^k$, our paper is concerned with the study of  weak solutions  to the reflection problem for  multidimensional SPDEs  \eqref{SPDE1}   in $D$ by introducing the associated BDSDE. \\ 
Inspired by the variational  formulation  of the obstacle problem for SPDEs and  Menaldi's work  \cite{M83}  on  reflected diffusion, we consider the  solution of the  ref{\color{blue}{l}}ection problem for the SPDEs \eqref{SPDE1}  as a  pair $ (u, \nu)$, where $ \nu$ is a random regular measure and $ u \in \mathbf{L}^2 \big(\Omega \times [0,T]; H^1 (\mathbb R^d)\big)$ satisfies the following relations :
\small
\begin{equation}
\begin{split}
\label{OSPDE1}
& (i) \; \;    u_t(x) \in \bar D , \quad d\mathbb{P}\otimes dt\otimes dx - \mbox{a.e.},  \\
& (ii)\;\; du_t (x) + \big[
 \;  \mathcal{L} u_t (x)  + f_t(x,u_t (x),\nabla u_t\sigma(x)) \, \big]\,  dt +  h_t(x,u_t(x),\nabla u_t\sigma(x))\cdot d\W_t  = - \nu (dt,dx), \quad a.s.,  \\
& (iii)\; \; \nu(u\notin \partial  D)=0 , a.s.,\\
 & (iv)  \; \;  u_T = \Phi, \quad dx-\mbox{a.e.}.
\end{split}
\end{equation}
$\nu$  is a random measure which  acts only when  the process $u$ reaches the boundary of  the domain $ D$.
 The rigorous  sense of the relation $(iii)$ will be based on the  probabilistic representation of the measure $\nu$  in terms of the bounded variation processes $K$, a component of the associated  solution of the reflected  BDSDE in the domain $D$. This  problem is  well known as a Skorohod  problem for SPDEs.\\  
In the case of diffusion processes  in a domain,  the reflection problem has been investigated by severals authors (see  \cite{S61}, \cite{W71},  \cite{ElkChM80},   \cite{LS84}). In the case of a convex domain this reflection problem was treated by Tanaka \cite{T79} and Menaldi \cite{M83} by using the variational inequality and the convexity properties of the domain.\\
In the one dimensional case, the reflection problem  for nonlinear PDEs (or SPDEs) has been studied by using different approaches.  
 The work of El Karoui et al \cite{Elk2} treats the obstacle problem for viscosity solution of  deterministic semilinear PDEs  within the framework of backward stochastic differential equations (BSDEs in short). 
This increasing process determines in fact the measure from the relation $(ii)$.  
Bally et al \cite{BCEF} (see also Matoussi and Xu \cite{MX08})  point out that the continuity of this process allows the classical notion of strong variational solution to be extended (see Theorem 2.2 of \cite{BensoussanLions78} p.238) and express the solution to the obstacle as a pair $(u, \nu)$ where $ \nu$ is supported by the set  $\{u=g\}$. \\
Matoussi and Stoica \cite{MS10} have proved an existence and uniqueness result for the obstacle problem of backward quasilinear stochastic PDE on the whole space $\R^d $ and driven by a finite dimensional Brownian motion. The method is based on the probabilistic interpretation of the solution by us- ing the backward doubly stochastic differential equation. They have also proved that the solution $u$ is a predictable continuous process which takes values in a proper Sobolev space and $\nu$ is a random regular measure satisfying the minimal Skohorod condition. In particular, they gave for the regular measure $\nu$ a probabilistic interpretation in terms of the continuous increasing process $K$ where $(Y,Z,K)$ is the solution of a reflected generalized BDSDE.

On the other hand, M.Pierre \cite{Pierre, Pierre80}  has studied parabolic PDEs with obstacles using the parabolic potential as a tool. He proved that the solution uniquely exists and is quasi-continuous with respect to the analytical capacity.  Moreover he gave a representation of the reflected measure $\nu$ in terms of the associated regular potential and the approach used  is based  on analytical quasi-sure analysis.  More recently,  Denis, Matoussi and  Zhang \cite{DMZ12} have extended this approach 
for the obstacle problem of quasilinear SPDEs when the obstacle is regular in some sense and controlled by the solution of a SPDE.\\

Nualart and Pardoux \cite{Nualart} have studied the obstacle problem for a nonlinear heat equation on the spatial interval $[0,1]$ with Dirichlet boundary conditions, driven by an additive space-time white noise. They proved the existence and uniqueness of the solution and their method relied heavily on the results for a deterministic variational inequality. Donati- Martin and Pardoux \cite{Donati-Pardoux} generalized the model of Nualart and Pardoux. The nonlinearity appears both in the drift and in the diffusion coefficients. They proved the existence of the solution by penalization method but they did not obtain the uniqueness result. And then in 2009, Xu and Zhang solved the problem of the uniqueness, see \cite{XZ09}.  We note also that Zhang established in \cite{Z11} the existence and uniqueness of solutions of system \eqref{OSPDE1} in the forward case when $x$ belongs to $[0,1]$. He approximated the system of SPDEs by a penalized system and used a number of a priori estimates to prove the convergence of the solutions.
However, in all their models,  they do not consider the case where the coefficients depend on $\nabla u$.\\

Our contributions in this paper are as following: first of all,  reflected BDSDEs in the convex domain $D$ are introduced and results of existence and uniqueness of  such RBDSDEs are established. Next, the existence and uniqueness results of the solution $(u, \nu)$ of the reflection  problem for \eqref{SPDE1} are given in Theorem \ref{existence:RSPDE}. Indeed, a probabilistic method based on reflected BDSDEs and stochastic flow technics   are investigated in our context (see e.g. \cite{BM01}, \cite{MS02}, \cite{K94a, K94b}  for these flow technics). 
The key element in \cite{BM01} is to use  
the inversion of stochastic flow which transforms the variational
formulation of the SPDEs to the associated BDSDEs. Thus it plays the same role as It\^o's formula in the case of the classical solution of SPDEs.  \\
We also mention the works \cite{CEK11}, \cite{HZ10} and  \cite{HT10} where they have studied  a  Reflected BSDEs  with oblique reflection in multi-dimensional case and their relations   to switching  problems.\\
  
  This paper is organized as following: in Section 2, first the basic assumptions and the definitions of the solutions
for  Reflected BDSDE in a convex domain are presented.  Then,  existence and  uniqueness of  solution of RBDSDE (Theorem \ref{existence:RBDSDE}) is given under only convexity assumption for the domain without any regularity on the boundary. This result  is  proved  by using  penalization approximation. Thanks to the convexity properties we prove several technical lemmas, in particular the  fundamental Lemma \ref{fundamental:lemma}. In Section 3, we study  semilinear SPDE's in a convex domain.  We first  provide useful results on stochastic flow associated with the forward SDEs, then in this
 setting  as in Bally and Matoussi \cite{BM01}, an equivalence norm result associated to the diffusion process  is given. The main result of this section  Theorem \ref{existence:RSPDE} is the existence and uniqueness results
   of the solution of reflected SPDEs in a convex domain. The proof of this result is  based on the probabilistic interpretation via the Reflected Forward-BDSDEs.  The uniqueness is a consequence of the variational formulation of the SPDEs  written with random test functions and the uniqueness of the solution of the Reflected  FBSDE.
    The existence of the solution is established by an approximation penalization  procedure, a priori estimates 
    and the equivalence norm results.  In the Appendix, technical lemmas for the existence of the solution of the Reflected BDSDEs and SPDEs in a convex domain are given.

\section{Backward Doubly Stochastic Differential Equations in a domain}
\subsection{Hypotheses and preliminaries}
\label{hypotheses}
The euclidean norm of a vector $x\in\R^k$ will be denoted by $|x|$, and for a $k\times k$ matrix $A$, we define $\|A\|=\sqrt{Tr AA^*}$. In what folllows let us fix a positive number $T>0$.\\
Let $(\Omega, \Fc,\P)$ be a probability product space, and let $\{W_s, 0\leq s\leq T\}$ and $\{B_s, 0\leq s\leq T\}$ be two mutually independent standard Brownian motion processes, with values respectively in $\R^d$ and in $\R^l$.
For each $t\in[0,T]$, we define
$$\Fc_t:=\Fc_t^B\vee\Fc_{t,T}^W\vee \Nc$$
where $\Fc_t^B=\sigma\{B_r, 0\leq r\leq t\}$, $\Fc_{t,T}^W=\sigma\{W_r-W_t, t\leq r\leq T\}$ and $\Nc$ the class of $\P$ null sets of $\Fc$.
Note that the collection $\{\Fc_t, t\in[0,T]\}$ is neither increasing nor decreasing, and it does not constitute a filtration.
\subsubsection{Convexity results}
\label{convexity:subsubsection}
Besides, we need to  recall properties related to the convexity of a nonempty domain $D$ in $\R^k$.  Let $\partial D$ denote the boundary of $D$ and  $\pi(x)$ the orthogonal  projection of $x\in\R^k$ on  the closure $\bar{D}$. We have the following properties:
\be
(x'-x)^*(x-\pi(x))\leq 0,  ~~ \forall x\in\R^{\color{red}{k}}, ~ \forall x'\in \bar{D}\label{prop1}
\ee
\be
(x'-x)^*(x-\pi(x))\leq (x'-\pi(x'))^*(x-\pi(x)),  ~~ \forall x, x'\in\R^k\label{prop2}
\ee
\be
\exists a \in D, \gamma > 0,\, \mbox{such that}\,\, (x-a)^*(x-\pi(x))\geq \gamma|x-\pi(x)|, ~~ \forall x\in\R^k. \label{prop3}
\ee
For $x\in\partial D$, we denote by $n(x)$ the set of outward normal unit vectors at the point $x$.\\

 To avoid technical complications, we will focus our study on domain $D$ which satisfy the following assumption:
 \begin{Assumption}\label{assumptiondomain}
 \begin{itemize}
 \item[(i)] $D$ is a regular domain (i.e. a convex domain with class $C^2$ boundary).\
 \item[(ii)] $0\in D$.
 \end{itemize}
  
 \end{Assumption}
 \begin{Remark}
 If not we can approximate our convex domain $D$ by regular convex domains
Indeed, we define a sequence of regular convex domains which approximate uniformly $D$. Indeed, the function $h(x)= d(x,D) :=\underset{y\in D}{\inf}|x-y|$ is convex and uniformly continuous in $\R^k$. If we denote $(g_\delta)_{0\leq \delta\leq\delta_0}$ the approximation identity with compact supports, then $h_\delta=g_\delta*h$ is a sequence of regular convex functions which tends uniformly to $h$ as $\delta\rightarrow 0$. For a fixed $\eta >0$, $\{x, h_\delta(x) <\eta\}$ are regular convex domains that converge uniformly in the Hausdroff metriic to $\{x, d(x,D) <\eta\}$ when $\delta$ tends to $0$.  Letting $\eta\rightarrow 0$, we conclude that for all $\varepsilon >0$ there exists a regular convex domain $D_\varepsilon$  such that
\be
\underset{x\in D}{\Sup}\, d(x,D_\varepsilon) <\varepsilon \quad \mbox{and} \quad \underset{x\in D_\varepsilon}{\Sup}\, d(x,D) <\varepsilon
\label{propappro}
\ee
\end{Remark}
One can find all these results in Menaldi \cite{M83}, page 737.
\subsubsection{Functional spaces and  assumptions}
Hereafter, let us define the spaces and the norms which will be needed for the formulation of the BDSDE in a domain.\\
\begin{description}
\item[-] $\mathbf{L}^p_k({\mathcal F}_T)$ the space of $k$-dimensional
${\mathcal F}_T$-measurable random variables $\xi$ such that
$$\begin{array}{ll}
          \|\xi\|_{L^p}^p:=\E(|\xi|^p)<+\infty;
  \end{array}$$

\item[-] ${\mathcal H}^2_{k\times d}([0,T])$ the set of (classes of $d\P\otimes dt$ a.e. equal) $k\times d$-dimensional jointly measurable processes 
such that $Z_t$ is $\Fc_t$-measurable for a.e. $t\in[0,T]$ and 
$$\begin{array}{ll}
          \|Z\|_{{\mathcal H}^2}^2:= \E[\Int_{0}^{T}|Z_t|^2dt]<+\infty;
  \end{array}$$

\item[-] ${\mathcal S}^2_k([0,T])$ the space of $\R^{k}$-valued processes $Y=(Y_t)_{t\leq T}$, with continuous paths such that $Y_t$ is $\Fc_t$-measurable and 
$$\|Y\|_{{\mathcal S}^2}^2:= \E[\,\underset{t\leq T}{\Sup}\,|Y_t|^2]<+\infty;$$

\item[-] ${\mathcal A}^2_k([0,T])$ the space of $\R^{k}$-valued processes $K=(K_t)_{t\leq T}$, with continuous and bounded variation paths such that $K_t$ is $\Fc_t$-measurable, $K_0=0$ and 
$$\|K\|_{{\mathcal A}^2}^2:= \E[\,\underset{t\leq T}{\Sup}\,|K_t|^2]<+\infty.$$
\end{description}
We next state our main assumptions on the terminal condition $\xi$ and the functions $f$ and $h$:
\begin{Assumption}\label{Ass1} 
 $\xi\in\mathbf{L}^2_k({\Fc}_T)$ and $\xi\in\bar{D}$ a.s.
 \end{Assumption}
 \begin{Assumption}\label{Ass2}
 $f:\Omega\times [0,T]\times\R^k\times\R^{k\times d}\rightarrow\R^k ~,~ h:\Omega\times [0,T]\times\R^k\times\R^{k\times d}\rightarrow\R^{k\times l}$ are two random functions verifying:
\begin{itemize}

\item[\rm{(i)}] For all $(y,z)\in\R^k\times\R^{k\times d}$, $f_t(\omega,y,z)$ and $h_t(\omega,y,z)$ are $\Fc_t$- measurable.

 \item [\rm{(ii)}]$\E\big[\Int_0^T |f_t(0,0)|^2dt\big] < +\infty$ \quad,\quad $\E\big[\Int_0^T\|h_t(0,0)\|^2dt\big] < +\infty.$
 \item [\rm{(iii)}] There exist constants $c>0$ and $0<\alpha<1$ such that for any $(\omega,t)\in\Omega\times[0,T]~;~
(y_1,z_1),(y_2,z_2)\in\R^k\times\R^{k\times d}$
\b*
|f_t(y_1,z_1)-f_t(y_2,z_2)|^2 &\leq & c\big(|y_1-y_2|^2+\|z_1-z_2\|^2\big)\\
\|h_t(y_1,z_1)-h_t(y_2,z_2)\|^2 &\leq & c|y_1-y_2|^2+\alpha \|z_1-z_2\|^2.
\e*
\end{itemize}
\end{Assumption}
We denote by $f_t^0:=f_t(\omega,0,0)$ and $h_t^0:=h_t(\omega,0,0)$.\\

\noindent We add the following  further assumption:
\begin{Assumption}\label{Ass3}
\begin{itemize}
\item[\rm{(i)}] $\xi\in \mathbf{L}^4_k({\Fc}_T).$
\item[\rm{(ii)}] There exist $c>0$ and $0\leq \beta < 1$ such that for all $(t,y,z)\in [0,T]\times\R^k\times\R^{k\times d}$
$$h_t \, h_t^* (y,z)\leq c (Id_{\R^k}+ yy^*)+ \beta  \, zz^*.$$
\item[\rm{(iii)}] $f$ and $h$ are uniformly bounded in $(y,z)$.
\end{itemize}
\end{Assumption}
\begin{Remark}
\begin{enumerate}
\item The Assumption \ref{Ass3} \rm{(i)} and \rm{(ii)} are needed to prove the uniform $L^4$-estimate for $(Y^n,Z^n)$ solution of BDSDE \eqref{BDSDEpen} (see estimate \eqref{estimateL4} in the Appendix \ref{Proof of Lemma}). This is crucial for our proof of the fundamental {\color{blue}{L}}emma \ref{fundamental:lemma}.
\item  The Assumption \ref{Ass3} \rm{(iii)} is only added for simplicity and it can be removed by standard technics of BSDEs. The natural condition instead of \rm{(iii)} is $f^0$ and $h^0$ in $\mathbf{L}^4(\Omega, \Fc ,\P).$
\end{enumerate}

\end{Remark}

\noindent Now we introduce the definition of the solution of BDSDEs in a domain.
\begin{Definition}\label{definition:RBSDE}
The triplet of processes $(Y_t,Z_t,K_t)_{\{0\leq t\leq T\}} $ is a solution of the backward doubly stochastic differential equation in a convex domain $D$, with
terminal condition $\xi$ and coefficients $f$ and $h$, if the following hold:
\begin{description}
 \item[(i)]  $Y\in{\Sc}^2_k([0,T]) ~,~ Z\in{\Hc}^2_{k\times d}([0,T])$ and $K \in {\mathcal A}^2_k([0,T])$,
 \item[(ii)] \begin{equation} 
\label{RBDSDE} Y_t  = \xi +\Int_t^T f_s(Y_s,Z_s)ds +\Int_t^T h_s(Y_s,Z_s)d\W_s -\Int_t^T Z_sdB_s +K_T-K_t ~,~ 0\leq t\leq T \,\,a.s.
\end{equation}
 \item[(iii)] $Y_t\in\bar{D}~ ,~ 0\leq t\leq T,~ a.s.$
 \item[(iv)] for any continuous progressively measurable process $ (z_t)_{0 \leq t \leq T}$ valued in $\bar{D}$, 
 \begin{equation}
 \label{skorohod1}
 \Int_0^T (Y_t-z_t)^* dK_t \leq 0, \;  a.s.
 \end{equation}
\end{description}

The triplet $(Y_t,Z_t,K_t)_{\{0\leq t\leq T\}}$ is called a solution of RBDSDE with data $(\xi,f,h)$.
\end{Definition}

\vspace{0.2cm}
\begin{Remark}  From Lemma 2.1 in \cite{GPP95}, the condition \eqref{skorohod1} implies that the bounded variation process $K$ acts only when $Y$ reaches the boundary of the convex  domain $D$ and the so-called Skorohod condition  is satisfied:
\begin{equation}
\label{skorohod2}
 \int_0^T \1_{ \{Y_t \in D\} } dK_t =0.
 \end{equation}
Moreover there exits an $ \mathcal F_t$-measurable process $ (\alpha_t)_{0 \leq t \leq T}$ valued in $ \mathbb R^k$ such that $$ K_t = \Int_0^t \alpha_s d\|K_s\|_{VT} \quad \mbox{and} \; 
 - \alpha_s \in n(Y_s).$$
\end{Remark}
\vspace{0.1cm}
\noindent In the following, $C$ will denote a positive constant which doesn't depend  on $n$ and can vary from line to line.
\subsection{Existence and uniqueness of the solution}
\label{existenceBDSDE:section}
In this section we establish existence  and uniqueness results for RBDSDE \eqref{RBDSDE}.

\begin{Theorem}\label{existence:RBDSDE}
Let the Asumptions \ref{assumptiondomain}, \ref{Ass1}, \ref{Ass2} and \ref{Ass3} hold. Then, the RBDSDE \eqref{RBDSDE} has a unique solution $(Y,Z,K) \in {\mathcal S}^2_{k}([0,T]) \times {\mathcal H}^2_{k\times d}([0,T]) \times {\mathcal A}^2_{k}([0,T])$.
\end{Theorem}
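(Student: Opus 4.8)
The plan is to construct the solution by a penalization scheme, in the spirit of Menaldi \cite{M83} and Gegout--Petit--Pardoux \cite{GPP95} for reflected diffusions in convex domains, adapted to the doubly stochastic framework of Pardoux--Peng. For each integer $n\geq 1$ I would consider the penalized BDSDE
\begin{equation*}
Y^n_t = \xi + \Int_t^T f_s(Y^n_s,Z^n_s)\,ds - \Int_t^T n\big(Y^n_s-\pi(Y^n_s)\big)\,ds + \Int_t^T h_s(Y^n_s,Z^n_s)\,d\W_s - \Int_t^T Z^n_s\,dB_s,
\end{equation*}
and set $K^n_t := -\Int_0^t n\big(Y^n_s-\pi(Y^n_s)\big)\,ds$. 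Since the projection $\pi$ onto the convex set $D$ is $1$-Lipschitz, the driver $(y,z)\mapsto f_s(y,z)-n(y-\pi(y))$ is Lipschitz in $(y,z)$ while $h$ still obeys the contraction condition of Assumption \ref{Ass2}(iii) with $\alpha<1$, so the classical Pardoux--Peng theory yields a unique solution $(Y^n,Z^n)\in\Sc^2_k([0,T])\times\Hc^2_{k\times d}([0,T])$ for every $n$.

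Next I would derive a priori estimates uniform in $n$. Applying It\^o's formula to $|Y^n_t-a|^2$, with $a\in D$ and $\gamma>0$ furnished by the convexity property \eqref{prop3}, the penalization contributes the term $-2n\Int_t^T (Y^n_s-a)^*(Y^n_s-\pi(Y^n_s))\,ds \leq -2n\gamma\Int_t^T |Y^n_s-\pi(Y^n_s)|\,ds$. Because this term has a favourable sign, the standard BDSDE estimates (using Assumption \ref{Ass2}(iii) to absorb the $\Int\|h\|^2$ contribution from the backward quadratic variation via $\alpha<1$, followed by Gronwall) give a uniform bound on $\|Y^n\|_{\Sc^2}$ and $\|Z^n\|_{\Hc^2}$, and simultaneously control the total variation $\Int_0^T n|Y^n_s-\pi(Y^n_s)|\,ds$ of $K^n$ in $L^1$. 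Upgrading this to the uniform bound $\E[\sup_t|K^n_t|^2]<\infty$ needed for membership in $\Ac^2_k$ is where the $L^4$ hypotheses of Assumption \ref{Ass3} enter: one first establishes the uniform fourth--moment estimate \eqref{estimateL4} for $(Y^n,Z^n)$ and then squares the variation bound.

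The main obstacle is the passage to the limit, since --- unlike the one--dimensional obstacle case --- there is no monotonicity of $K^n$ in $n$, so I would instead prove that $(Y^n,Z^n)$ is Cauchy. Applying It\^o to $|Y^n_t-Y^m_t|^2$ produces the cross term $2\Int_t^T (Y^n_s-Y^m_s)^*\,d(K^n_s-K^m_s)$, and the crux is to dominate it by quantities tending to $0$. Writing $Y^n_s-Y^m_s = (Y^n_s-\pi(Y^n_s)) - (Y^m_s-\pi(Y^m_s)) + (\pi(Y^n_s)-\pi(Y^m_s))$ and invoking \eqref{prop1}--\eqref{prop2} (so that, e.g., $(\pi(Y^m_s)-Y^n_s)^*(Y^n_s-\pi(Y^n_s))\leq 0$), this cross term is bounded by the penetration depths $n|Y^n_s-\pi(Y^n_s)|$ and $m|Y^m_s-\pi(Y^m_s)|$ paired against the small quantities $|Y^m_s-\pi(Y^m_s)|$, $|Y^n_s-\pi(Y^n_s)|$, which vanish as $n,m\to\infty$ by the a priori control combined with the fundamental Lemma \ref{fundamental:lemma} (whose proof relies precisely on the $L^4$--estimate). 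The Cauchy property then delivers limits $Y^n\to Y$ in $\Sc^2_k$, $Z^n\to Z$ in $\Hc^2_{k\times d}$ and $K^n\to K$ in $\Ac^2_k$; since $n|Y^n_s-\pi(Y^n_s)|$ stays bounded in $L^1$ while $n\to\infty$, the penetration $|Y^n-\pi(Y^n)|\to 0$, giving $Y_t\in\bar D$, so condition (iii) of Definition \ref{definition:RBSDE} holds.

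Finally I would check the Skorohod condition and uniqueness. For \eqref{skorohod1}, given any continuous $\bar D$--valued process $(z_t)$, property \eqref{prop1} gives $(Y^n_s-z_s)^*(Y^n_s-\pi(Y^n_s))\geq 0$, hence $\Int_0^T (Y^n_s-z_s)^*\,dK^n_s = -\Int_0^T n(Y^n_s-z_s)^*(Y^n_s-\pi(Y^n_s))\,ds\leq 0$ already at the penalized level; passing to the limit yields $\Int_0^T (Y_s-z_s)^*\,dK_s\leq 0$. For uniqueness, given two solutions $(Y^1,Z^1,K^1)$ and $(Y^2,Z^2,K^2)$, I would apply It\^o to $|Y^1_t-Y^2_t|^2$: the Lipschitz bound on $f$ and the $\alpha<1$ bound on $h$ control the driver and martingale terms, while the reflection term $2\Int_t^T (Y^1_s-Y^2_s)^*\,d(K^1_s-K^2_s)$ is nonpositive by applying \eqref{skorohod1} to each solution against the other's $\bar D$--valued path ($z=Y^2$ for the first, $z=Y^1$ for the second). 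Gronwall's lemma then forces $Y^1=Y^2$ and $Z^1=Z^2$, and hence $K^1=K^2$.
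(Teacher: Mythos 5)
Your proposal is correct and follows essentially the same route as the paper's proof: penalization with $K^n_t=-n\int_0^t(Y^n_s-\pi(Y^n_s))\,ds$, uniform $\Sc^2\times\Hc^2$ estimates via It\^o applied to $|Y^n_t-a|^2$ and property \eqref{prop3}, the $L^4$-based bound $\E\big[\big(n\int_0^T d(Y^n_s,D)\,ds\big)^2\big]\leq C$ for the variation of $K^n$, the Cauchy argument for $(Y^n,Z^n)$ in which the cross penalization term is dominated by $d(Y^n,D)\,d(Y^m,D)$ through the convexity inequalities and then killed by Lemma \ref{fundamental:lemma}, the Skorohod condition obtained by passing to the limit in $\int_0^T(Y^n_t-z_t)^*\,dK^n_t\leq 0$, and uniqueness via It\^o plus the minimality condition. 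The only step you compress is the almost sure convergence of the Stieltjes integrals $\int_0^T(Y^n_t-z_t)^*\,dK^n_t$, which the paper justifies by extracting a.s.\ uniformly convergent subsequences and invoking Lemma 5.8 of \cite{GPP95}; this is routine given the uniform variation bound you have already established.
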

\begin{proof}\\
{\bf{a) Uniqueness}}: Let $(Y^1,Z^1,K^1)$ and $(Y^2,Z^2,K^2)$ be two solutions of  the RBDSDE  \eqref{RBDSDE}. Applying generalized It\^o'{\color{blue}{s}} formula (Lemma 1.3 in \cite{pp1994} p.213) yields
\be \label{itouni}
|Y_t^1-Y_t^2|^2&+&\Int_t^T \|Z_s^1-Z_s^2\|^2ds  =  2\Int_t^T (Y_s^1-Y_s^2)^*(f_s(Y_s^1,Z_s^1)-f_s(Y_s^2,Z_s^2))ds\nonumber\\
& + & 2\Int_t^T (Y_s^1-Y_s^2)^*(h_s(Y_s^1,Z_s^1)-h_s(Y_s^2,Z_s^2))d\W_s -2\Int_t^T (Y_s^1-Y_s^2)(Z_s^1-Z_s^2)dB_s\nonumber\\
&+& 2\Int_t^T (Y_s^1-Y_s^2)^*(dK_s^1-dK_s^2) + \Int_t^T \|h_s(Y_s^1,Z_s^1)-h_s(Y_s^2,Z_s^2)\|^2ds. 
\ee
Moreover, under the minimality condition (iv) we have 
\be\label{minest}
\Int_t^T (Y_s^1-Y_s^2)^*(dK_s^1-dK_s^2)\leq 0 ,\quad \text{for all}~~ t\in[0,T].
\ee
Then, plugging (\ref{minest}) in (\ref{itouni}) and taking expectation we obtain
\end{proof}
\b* 
\E[|Y_t^1-Y_t^2|^2]&+&\E[\Int_t^T \|Z_s^1-Z_s^2\|^2ds] \leq  2\E[\Int_t^T (Y_s^1-Y_s^2)^*(f_s(Y_s^1,Z_s^1)-f_s(Y_s^2,Z_s^2))ds]\nonumber\\
&+& \E[\Int_t^T \|h_s(Y_s^1,Z_s^1)-h_s(Y_s^2,Z_s^2)\|^2ds]. 
\e*
Hence from the Lipschitz Assumption on $h$ and the inequality $2ab\leq \epsilon a^2+\epsilon^{-1}b^2$, for all $\epsilon>0$, it follows that 
\b* 
\E[|Y_t^1-Y_t^2|^2]&+&\E[\Int_t^T \|Z_s^1-Z_s^2\|^2ds]\leq  (c+\epsilon) \E[\Int_t^T |Y_s^1-Y_s^2|^2ds]\\
&+&\epsilon^{-1}\E[\Int_t^T |f_s(Y_s^1,Z_s^1)-f_s(Y_s^2,Z_s^2)|^2ds]
+\alpha \E[\Int_t^T \|Z_s^1-Z_s^2\|^2ds],
\e*
where $0<\alpha <1$. Choosing $\epsilon=\Frac{2 c}{1-\alpha}$ and using the Lipschitz Assumption on $f$, we get
\b* 
\E[|Y_t^1-Y_t^2|^2]&+&\E[\Int_t^T \|Z_s^1-Z_s^2\|^2ds]\leq  (c+\Frac{2 c}{1-\alpha}+\Frac{1-\alpha}{2}) \E[\Int_t^T |Y_s^1-Y_s^2|^2ds]\\
&+&\Frac{1-\alpha}{2}\E[\Int_t^T \|Z_s^1-Z_s^2\|^2ds]
+\alpha \E[\Int_t^T \|Z_s^1-Z_s^2\|^2ds].
\e*
Consequently
$$\E[|Y_t^1-Y_t^2|^2]+(\Frac{1-\alpha}{2})\E[\Int_t^T \|Z_s^1-Z_s^2\|^2ds]\leq  (c+\Frac{2 c}{1-\alpha}+\Frac{1-\alpha}{2}) \E[\Int_t^T |Y_s^1-Y_s^2|^2ds].$$
From Gronwall's lemma, $\E[|Y_t^1-Y_t^2|^2] = 0,~ 0\leq t\leq T$, and $\E[\Int_0^T \|Z_s^1-Z_s^2\|^2ds] =0$.\\[0.3cm]
{\bf{b) Existence}}: The existence of a solution will be proved by penalisation method. For $n\in \mathbb{N}$, we consider for
all $t\in [0,T]$,
\begin{align}
Y_{t}^{n}=\xi+\Int_{t}^{T}f_s(Y_s^n,Z_s^n)ds+\Int_{t}^{T}h_s(Y_s^n,Z_s^n)d\W_s-n
\Int_{t}^{T}(Y_{s}^{n}- \pi(Y_s^n))ds
-\int_{t}^{T}Z_{s}^{n}dB_{s}.
\label{BDSDEpen}
\end{align}
From Pardoux and Peng \cite{pp1994} (Theorem 1.1), we know that the above equation admits a unique solution with coefficient $f^n$ and $h$, where $
f^{n}(t,x,y)=f(t,x,y,z)-n(y-\pi(y))$.\\
Denote by $K^n_t:=-n\Int_0^t(Y_{s}^{n}- \pi(Y_s^n))ds$. In order to prove the convergence of the sequence $(Y^n,Z^n,K^n)$ to the solution of our RBDSDE \eqref{existence:RBDSDE}, we need several lemmas.\\

\noindent We start with the following lemma:  
\begin{Lemma} \label{lem}There exists a constant $C>0$ such that
\be 
\forall n\in\N \qquad  \E[\Int_0^T d^2(Y_s^n,D)ds]\leq \Frac{C}{n}.
\ee
\end{Lemma}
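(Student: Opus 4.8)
The plan is to apply the generalized (doubly stochastic) It\^o formula to the convex function $\phi(x):=\tfrac12 d^2(x,D)=\tfrac12|x-\pi(x)|^2$ along the penalized solution $Y^n$ of \eqref{BDSDEpen}. For a closed convex set the squared--distance function is of class $C^{1,1}$ and convex, with gradient $\nabla\phi(x)=x-\pi(x)$ and Hessian satisfying $0\le D^2\phi\le I$ (because $x\mapsto x-\pi(x)$ is $1$-Lipschitz, projection onto a convex set being firmly non-expansive). Writing $P^n_s:=Y^n_s-\pi(Y^n_s)$, equation \eqref{BDSDEpen} reads in forward differential form $dY^n_s=-f_s(Y^n_s,Z^n_s)\,ds-h_s(Y^n_s,Z^n_s)\,d\W_s+nP^n_s\,ds+Z^n_s\,dB_s$, and since $\xi\in\bar D$ by Assumption \ref{Ass1} we have $\phi(\xi)=0$.

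Applying It\^o's formula (Lemma 1.3 of \cite{pp1994}) to $\phi(Y^n_\cdot)$ between $0$ and $T$, using $\nabla\phi(Y^n_s)=P^n_s$ and $\nabla\phi(Y^n_s)^*(nP^n_s)=n|P^n_s|^2$, and then taking expectations so that the $dB$ and $d\W$ martingale parts drop out, I would obtain
\begin{align*}
n\,\E\!\int_0^T |P^n_s|^2\,ds
&= -\,\E\,\phi(Y^n_0)+\E\!\int_0^T (P^n_s)^* f_s(Y^n_s,Z^n_s)\,ds \\
&\quad +\tfrac12\E\!\int_0^T \mathrm{tr}\big(D^2\phi(Y^n_s)\,h_sh_s^*\big)\,ds-\tfrac12\E\!\int_0^T \mathrm{tr}\big(D^2\phi(Y^n_s)\,Z^n_s(Z^n_s)^*\big)\,ds.
\end{align*}
The crucial point, specific to the doubly stochastic setting, is the sign of the second--order corrections: the forward integral $\int Z^n\,dB$ contributes $-\tfrac12\mathrm{tr}(D^2\phi\,Z^n(Z^n)^*)\le0$, which together with $-\E\phi(Y^n_0)\le0$ may simply be discarded, whereas the backward integral $\int h\,d\W$ contributes the nonnegative term $+\tfrac12\mathrm{tr}(D^2\phi\,h_sh_s^*)$, which must be retained. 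Since $0\le D^2\phi\le I$ and $h$ is bounded by Assumption \ref{Ass3}(iii), this term is bounded by $\tfrac12\E\int_0^T\|h_s\|^2\,ds\le C_1$ uniformly in $n$; this is exactly what generates the $1/n$ contribution.

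It then remains to estimate $\E\int_0^T (P^n_s)^* f_s\,ds\le\E\int_0^T d(Y^n_s,D)\,|f_s|\,ds$ by Young's inequality with weight $n$, namely $d\,|f|\le\tfrac n2 d^2+\tfrac1{2n}|f|^2$, and to absorb the resulting $\tfrac n2\E\int_0^T d^2(Y^n_s,D)\,ds$ into the left-hand side. Using again the boundedness of $f$ to control $\E\int_0^T|f_s(Y^n_s,Z^n_s)|^2\,ds\le C_2$ uniformly in $n$, I am left with $\tfrac n2\E\int_0^T d^2(Y^n_s,D)\,ds\le\tfrac{C_2}{2n}+C_1$, which is precisely the claimed bound $\E\int_0^T d^2(Y^n_s,D)\,ds\le C(\tfrac1n+\tfrac1{n^2})$. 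The main obstacle is the rigorous justification of the It\^o formula, since $\phi$ is only $C^{1,1}$ and not $C^2$: I would mollify $\phi$ into smooth convex functions $\phi_\varepsilon=\phi*\rho_\varepsilon$ (which preserve $0\le D^2\phi_\varepsilon\le I$), apply the $C^2$ formula of \cite{pp1994}, and pass to the limit $\varepsilon\to0$, exploiting that only an inequality is needed so that the favorably--signed Hessian and initial terms can be dropped before taking the limit. The only other delicate point is tracking the opposite signs of the two quadratic--variation corrections, which is what distinguishes this BDSDE estimate from its BSDE analogue and is responsible for the genuinely present $1/n$ term.
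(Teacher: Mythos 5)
Your proof is correct and follows essentially the same route as the paper's: apply the Pardoux--Peng generalized It\^o formula to the squared distance $d^2(\cdot,D)$ along $Y^n$, use $\xi\in\bar D$ to kill the terminal term, discard the favorably-signed forward ($dB$) quadratic correction by convexity, bound the backward ($d\W$) correction by a constant via the boundedness of $h$ and of the Hessian, and absorb the $f$-term into the penalization term by Young's inequality with weight $n$. The only difference is cosmetic: the paper sidesteps the $C^{1,1}$-versus-$C^2$ issue by its standing simplifying assumption that $D$ has $C^2$ boundary (Remark after Assumption \ref{Ass3}), whereas you handle a general convex $D$ by mollifying $\phi$ while preserving $0\le D^2\phi_\varepsilon\le I$ --- a legitimate refinement, not a different argument.
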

\begin{proof}
For the sake of simplicity, we treat only the case where $D$ is a convex set with class $C^2$ boundary implying that the function $\rho(x)=d^2(x,D)=|x-\pi(x)|^2$ belongs to $C^2$ (see Subsection \ref{convexity:subsubsection} and Appendix \ref{Some properties of convexity}). Thus, we apply generalized It\^o's formula (Lemma 1.3 in \cite{pp1994} p.213) to $\rho(Y_t^n)$ to obtain
\be\begin{split} 
\rho(Y_t^n)&+ \Frac{1}{2}\Int_t^T trace[Z_s^nZ_s^{n *}Hess \rho(Y_s^n)]ds = \rho(\xi)
+ \Int_t^T (\nabla \rho(Y_s^n))^* f_s(Y_s^n,Z_s^n)ds\\&- \Int_t^T (\nabla \rho(Y_s^n))^* Z_s^n dB_s + \Int_t^T (\nabla \rho(Y_s^n))^* h_s(Y_s^n,Z_s^n)d\W_s\\
& + \Frac{1}{2}\Int_t^T trace[(h_sh_s^*)(Y_s^n,Z_s^n)Hess \rho(Y_s^n)]ds - 2n \Int_t^T (Y_s^n- \pi(Y_s^n))^*(Y_s^n-\pi(Y_s^n))ds.
\label{itorho}
\end{split}
\ee 
Since $\xi\in \bar{D} ~a.s.$, we have that $\rho(\xi)=0$. 
We get from the fact that $|\nabla\rho(x)|^2=4\rho(x)$ and the boundedness of $h$ and the Hessian of $\rho$
\be
\begin{split}
\rho(Y_t^n)&+ \Frac{1}{2}\Int_t^T trace[Z_s^nZ_s^{n *}Hess \rho(Y_s^n)]ds  + 2 n\Int_t^T d^2(Y_s^n,D)ds\\
&\leq 2 \Int_t^T (\rho(Y_s^n))^{1/2} |f_s(Y_s^n,Z_s^n)|ds - 2 \Int_t^T (Y_s^n- \pi(Y_s^n))^* Z_s^n dB_s\\
&+2 \Int_t^T (Y_s^n- \pi(Y_s^n))^* h_s(Y_s^n,Z_s^n)d\W_s  +C.
\end{split}
\ee
Now the inequality $2ab\leq a^2+b^2$ with $a=\sqrt{\Frac{n}{2}\rho(Y_s^n)}$ yields 
\b*
(\rho(Y_s^n))^{1/2} |f_s(Y_s^n,Z_s^n)|&\leq &\Frac{n}{4}\rho(Y_s^n)+\Frac{1}{n} |f_s(Y_s^n,Z_s^n)|^2.
\e*
Then it follows that, 
\be
\begin{split}
\rho(Y_t^n)&+ \Frac{1}{2}\Int_t^T trace[Z_s^nZ_s^{n *}Hess \rho(Y_s^n)]ds  + \Frac{3n}{2}\Int_t^T d^2(Y_s^n,D)ds\\
&\leq  2\Int_t^T\Frac{1}{n}|f_s(Y_s^n,Z_s^n)|^2ds - 2 \Int_t^T (Y_s^n- \pi(Y_s^n))^* Z_s^n dB_s \\
&+2 \Int_t^T (Y_s^n- \pi(Y_s^n))^* h_s(Y_s^n,Z_s^n)d\W_s
+ C.
\label{estY}
\end{split}
\ee
By taking expectation and using the boundedness of $f$, we have 
\be
\E[\rho(Y_t^n)]+\Frac{1}{2}\E[\Int_t^T trace[Z_s^nZ_s^{n *}Hess \rho(Y_s^n)]ds] + \Frac{3n}{2}\E[\Int_t^T d^2(Y_s^n,D)ds]\leq  C\big(1+\Frac{1}{n}\big).
\label{estdist}
\ee
Hence, the required result is obtained.
\ep 
\end{proof} 
\vspace{0.5cm}
\noindent The next lemma plays a crucial role to prove the strong convergence of $(Y^n,Z^n,K^n)$.  
\begin{Lemma}
\label{fundamental:lemma}
\be 
\E\Big[\underset{0\leq t \leq T}{\Sup}(d(Y_t^n,D))^4\Big]\underset{n\rightarrow +\infty}{\longrightarrow} 0.
\label{dist}
\ee
\end{Lemma}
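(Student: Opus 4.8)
The plan is to apply the generalized It\^o formula (Lemma 1.3 in \cite{pp1994}) to $\phi(y):=d^4(y,D)=\rho^2(y)$, where $\rho(y)=d^2(y,D)=|y-\pi(y)|^2$ as in the proof of Lemma \ref{lem}, and to exploit the convexity of $D$ recalled in Section \ref{convexity:subsubsection}: the projection $\pi$ is nonexpansive and $\rho$ is convex, so that $0\le \mathrm{Hess}\,\rho$ and $\|\mathrm{Hess}\,\rho\|\le C$. A direct computation gives $\nabla\phi=4\,\rho\,(y-\pi(y))$ and $\mathrm{Hess}\,\phi=8(y-\pi(y))(y-\pi(y))^*+2\rho\,\mathrm{Hess}\,\rho$, hence $(\nabla\phi(y))^*(y-\pi(y))=4\,d^4(y,D)$ and, since $Z^nZ^{n*}\ge0$ and $\mathrm{Hess}\,\rho\ge0$,
\begin{equation*}
\Frac12\,\mathrm{trace}[Z^nZ^{n*}\mathrm{Hess}\,\phi(Y^n)]=4\,|Z^{n*}(Y^n-\pi(Y^n))|^2+\rho(Y^n)\,\mathrm{trace}[Z^nZ^{n*}\mathrm{Hess}\,\rho(Y^n)]\ge 4\,|Z^{n*}(Y^n-\pi(Y^n))|^2\ge 0.
\end{equation*}
Writing the It\^o formula for $\phi(Y_t^n)$ with the same structure as \eqref{itorho}, the penalization produces the coercive term $4n\Int_t^T d^4(Y_s^n,D)\,ds$ on the left, while the boundedness of $f$ and $h$ (Assumption \ref{Ass3}(iii)) bounds the drift term by $C\Int_t^T d^3(Y_s^n,D)\,ds$ and the $hh^*$--Hessian term by $C\Int_t^T d^2(Y_s^n,D)\,ds$.

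First I would record the a priori control obtained by taking expectations, which annihilates the two martingale integrals. Dropping all nonnegative left-hand terms except the coercive one, using Young's inequality $d^3\le\Frac{\epsilon}{2}d^4+\Frac{1}{2\epsilon}d^2$ and absorbing the $d^4$--contribution into $4n\Int d^4$ (choose $\epsilon$ with $C\epsilon<8n$), Lemma \ref{lem} yields the two estimates
\begin{equation*}
\E\Big[\Int_0^T |Z_s^{n*}(Y_s^n-\pi(Y_s^n))|^2\,ds\Big]\le \Frac{C}{n},\qquad \E\Big[\Int_0^T d^4(Y_s^n,D)\,ds\Big]\le\Frac{C}{n^2}.
\end{equation*}
The first estimate, which controls the \emph{normal} component of $Z^n$ weighted by the distance, is the quantity that makes the argument close.

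Next, returning to the pathwise It\^o identity, I would drop the nonnegative coercive and trace terms on the left (after absorbing the $d^4$--part of $d^3$ as above), take the supremum over $t\in[0,T]$ and then expectations, and apply the Burkholder--Davis--Gundy inequality to the two martingale parts $\Int_t^T(\nabla\phi(Y_s^n))^*Z_s^n\,dB_s$ and $\Int_t^T(\nabla\phi(Y_s^n))^*h_s\,d\W_s$. The decisive observation is that $(\nabla\phi(Y^n))^*Z^n=4\,d^2(Y^n,D)\,(Y^n-\pi(Y^n))^*Z^n$, so $|(\nabla\phi(Y^n))^*Z^n|^2=16\,d^4(Y^n,D)\,|Z^{n*}(Y^n-\pi(Y^n))|^2$. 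Setting $A_n:=\E[\,\underset{0\le t\le T}{\Sup}\,d^4(Y_t^n,D)]$, BDG and Cauchy--Schwarz give
\begin{equation*}
\E\Big[\underset{0\le t\le T}{\Sup}\Big|\Int_t^T(\nabla\phi(Y_s^n))^*Z_s^n\,dB_s\Big|\Big]\le C\,A_n^{1/2}\Big(\E\Big[\Int_0^T|Z_s^{n*}(Y_s^n-\pi(Y_s^n))|^2ds\Big]\Big)^{1/2}\le C\,A_n^{1/2}n^{-1/2},
\end{equation*}
and similarly $\E[\underset{t}{\Sup}|\Int_t^T(\nabla\phi(Y_s^n))^*h_s\,d\W_s|]\le C\,A_n^{1/2}n^{-1/2}$, using $|(\nabla\phi(Y^n))^*h|^2\le C\,d^6(Y^n,D)$ together with the boundedness of $h$ and Lemma \ref{lem}. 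Combined with $C\,\E[\Int_0^T d^2(Y_s^n,D)\,ds]\le C/n$ this gives
\begin{equation*}
A_n\le \Frac{C}{n}+C\,A_n^{1/2}\,n^{-1/2}.
\end{equation*}
Since $A_n<\infty$ for each fixed $n$ by the uniform $L^4$--estimate \eqref{estimateL4} (which yields $\E[\Sup_t|Y_t^n|^4]\le C$, hence $A_n\le C$), Young's inequality $C\,A_n^{1/2}n^{-1/2}\le\Frac12 A_n+\Frac{C}{n}$ lets me absorb $\Frac12 A_n$ on the left, leaving $A_n\le C/n\to0$, which is \eqref{dist}.

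The main obstacle, and the reason this does not reduce to a routine moment estimate, is the $Z^n$--martingale term. The naive bound $|(\nabla\phi(Y^n))^*Z^n|\le 4\,d^3(Y^n,D)\,|Z^n|$ forces one to control $\E[\Int_0^T d^2|Z^n|^2ds]$, which by Cauchy--Schwarz is only $\le C\,A_n^{1/2}$ with an $O(1)$ constant; feeding this back produces $A_n\le C/n+C\,A_n^{3/4}$, which gives boundedness of $A_n$ but \emph{not} convergence to $0$. The resolution is to retain the normal-component structure $|Z^{n*}(Y^n-\pi(Y^n))|^2$ instead of dominating it by $d^2|Z^n|^2$, and to observe that precisely this quantity is controlled with the rate $1/n$ by the nonnegative second-order term $\mathrm{trace}[Z^nZ^{n*}\mathrm{Hess}\,\rho]\ge0$ coming from the convexity of $D$. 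This coupling between the gradient appearing in the stochastic integral and the coercive Hessian term is the crux; the remaining steps are standard Young and BDG manipulations.
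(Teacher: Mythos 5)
Your proof is correct and follows essentially the same route as the paper: It\^o's formula applied to $\varphi=d^4(\cdot,D)$, expectation estimates yielding $\E\big[\Int_0^T d^4(Y_s^n,D)ds\big]\leq C/n^2$ and, crucially, control of the normal component $\E\big[\Int_0^T |Z_s^{n*}(Y_s^n-\pi(Y_s^n))|^2ds\big]\leq C/n$ through the positive semidefinite rank-one part $2\nabla\rho\,(\nabla\rho)^*$ of $Hess\,\varphi$, followed by Burkholder--Davis--Gundy and absorption of $\E[\underset{0\leq t\leq T}{\Sup}\varphi(Y_t^n)]$ into the left-hand side. The differences are only cosmetic: you track explicit $1/n$ rates and apply Cauchy--Schwarz then Young at the end where the paper applies Young inline (estimates \eqref{estimate1}--\eqref{estimate2}), and you make explicit the finiteness of $\E[\underset{0\leq t\leq T}{\Sup}d^4(Y_t^n,D)]$ via the $L^4$-estimate \eqref{estimateL4}, which the paper invokes only implicitly.
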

\begin{proof} 
We denote by $\rho(x)=d^2(x,D)$ and $\varphi(x)= \rho^2(x)$.
By applying It\^o's formula to $\varphi(Y_t^n)=d^4(Y_t^n,D)$, we obtain that
\be\begin{split} 
\varphi(Y_t^n)&+ \Frac{1}{2}\Int_t^T trace[Z_s^nZ_s^{n *}Hess \varphi(Y_s^n)]ds = \varphi(\xi)
+ \Int_t^T (\nabla \varphi(Y_s^n))^* f_s(Y_s^n,Z_s^n)ds\\&- \Int_t^T (\nabla \varphi(Y_s^n))^* Z_s^n dB_s + \Int_t^T (\nabla \varphi(Y_s^n))^* h_s(Y_s^n,Z_s^n)d\W_s\\
& + \Frac{1}{2}\Int_t^T trace[(h_sh_s^*)(Y_s^n,Z_s^n)Hess \varphi_(Y_s^n)]ds - n \Int_t^T (\nabla \varphi(Y_s^n))^*(Y_s^n-\pi(Y_s^n))ds.
\label{ito}
\end{split}
\ee 
Since $\xi\in \bar{D} ~a.s.$, we have that $\varphi(\xi)=0$ and the chain rule of differentiation gives that
 \be
\nabla \varphi(x)&=&2\rho(x)\nabla\rho(x)=4\rho(x)(x-\pi(x)) \label{gradphi} \\
Hess \varphi(x)&=&2\nabla\rho(x)(\nabla\rho(x))^*+2\rho(x) Hess \rho(x)\label{hessphi}.
\ee
Then it follows that
\be\label{Ito}
\begin{split} 
\varphi(Y_t^n)&+ \Frac{1}{2}\Int_t^T trace[Z_s^nZ_s^{n *}Hess \varphi(Y_s^n)]ds = 4\Int_t^T (\rho(Y_s^n)(Y_s^n-\pi(Y_s^n))^* f_s(Y_s^n,Z_s^n)ds\\&- 4\Int_t^T (\rho(Y_s^n)(Y_s^n-\pi(Y_s^n))^* Z_s^n dB_s + 4\Int_t^T (\rho(Y_s^n)(Y_s^n-\pi(Y_s^n))^* h_s(Y_s^n,Z_s^n)d\W_s\\
& + \Frac{1}{2}\Int_t^T trace[(h_sh_s^*)(Y_s^n,Z_s^n)Hess \varphi(Y_s^n)]ds - 4n \Int_t^T \rho^2(Y_s^n)ds.
\end{split}
\ee 
By taking expectation we have 
\be
\begin{split}\label{estvarphi} 
\E[\varphi(Y_t^n)]&+ \Frac{1}{2}\E\big[\Int_t^T trace[Z_s^nZ_s^{n *}Hess \varphi(Y_s^n)]ds\big]+4n \E\big[\Int_t^T \varphi(Y_s^n)ds\big]\\
& = 4\E\big[\Int_t^T (\rho(Y_s^n)(Y_s^n-\pi(Y_s^n))^* f_s(Y_s^n,Z_s^n)ds]\\
 &+ \Frac{1}{2}\E\Big[\Int_t^T trace[(h_sh_s^*)(Y_s^n,Z_s^n)Hess \varphi(Y_s^n)]ds].
\end{split}
\ee
For the last term, we get from the fact that $|\nabla\rho(x)|^2=4\rho(x)$ and the boundedness of $h$ and $Hess\rho$ 
\be \label{esth}
\begin{split}
\E\Big[\Int_t^T trace[(h_sh_s^*)(Y_s^n,Z_s^n)&Hess \varphi(Y_s^n)]ds]\leq 2\E\Big[\Int_t^T\langle h_s(Y_s^n,Z_s^n),\nabla\rho(Y_s^n)\rangle^2 ds\Big]\\
& + \E\Big[\Int_t^T 2\rho(Y_s^n)trace[(h_sh_s^*)(Y_s^n,Z_s^n)Hess \rho(Y_s^n)]ds\Big]\\
&\leq C\E\Big[\Int_t^T|\nabla\rho(Y_s^n)|^2 ds\Big] + C\E\Big[\Int_t^T \rho(Y_s^n)ds\Big]\\
&\leq C  \E\Big[\Int_0^T(d(Y_s^n,D))^{2}ds\Big].
\end{split}
\ee 
Now the inequality $2ab\leq a^2+b^2$ with $a=(d(Y_s^n,D))^{2}$ and the boundedness of $f$ yield 
\be
\begin{split}\label{estf}
4(d(Y_s^n,D))^{3} |f_s(Y_s^n,Z_s^n)|&\leq 2 (d(Y_s^n,D))^{4}+ 2 (d(Y_s^n,D))^{2}|f_s(Y_s^n,Z_s^n)|^2\\
&\leq 2 \varphi(Y_s^n) + 2C(d(Y_s^n,D))^{2} .
\end{split}
\ee
By plugging the estimate (\ref{estf}) and \eqref{esth} in (\ref{estvarphi}), we obtain thanks to lemma \ref{lem}
\be\label{estphi}
\begin{split} 
\E[\varphi(Y_t^n)]+ \Frac{1}{2}\E\big[\Int_t^T trace [Z_s^nZ_s^{n *}& Hess \varphi(Y_s^n)]ds\big]+(4n-2) \E\big[\Int_t^T \varphi(Y_s^n)ds\big]\\
& \leq C\E\big[\Int_0^T (d(Y_s^n,D))^{2}ds\big]\leq C\big(\Frac{1}{n}+\Frac{1}{n^2}\big).
\end{split}
\ee
Notice also that  Hessian of $ \varphi(Y_s^n)$ is a positive semidefinite matrix since $ \varphi$ is a convex function, so we get that
$ \E\big[\Int_t^T trace [Z_s^nZ_s^{n *} Hess \varphi(Y_s^n)]ds\big] \geq 0$ and consequently,
\be\label{unifY}
\underset{0\leq t\leq T}{\Sup}\E[\varphi(Y_t^n)]\leq C\big(\Frac{1}{n}+\Frac{1}{n^2}\big).
\ee
Moreover, we can deduce from \eqref{estphi} that, for every $t\in[0,T]$
\be\label{estZ}
 \E\big[\Int_0^T trace [Z_s^nZ_s^{n *} Hess \varphi(Y_s^n)]ds\big]\longrightarrow 0, \, \text{as}\, n\rightarrow \infty.
\ee
On the other hand, taking the supremum over $t$ in the equation \eqref{Ito}, by Burkholder-Davis-Gundy's inequlity and the previous calculations it follows that
\be\label{uniformestimate}
\begin{split}
\E[\underset{0\leq t\leq T}{\Sup}\varphi(Y_t^n)]&\leq C \E[\Int_0^T\varphi(Y_s^n) ds]+C\E\Big[\Int_0^T (d(Y_s^n,D))^{2}ds\Big]\\
&+C\E\Big[\underset{0\leq t\leq T}{\Sup}\Int_t^T (\rho(Y_s^n)\nabla\rho(Y_s^n))^* Z_s^n dB_s\Big]\\
&+ C \E\Big[\underset{0\leq t\leq T}{\Sup}\Int_t^T (\rho(Y_s^n)\nabla\rho(Y_s^n))^* h_s(Y_s^n,Z_s^n)d\W_s\Big]\\
&\leq C \E[\Int_0^T\varphi(Y_s^n) ds]+C\E\Big[\Int_0^T (d(Y_s^n,D))^{2}ds\Big]\\&+C\E\Big[\Big(\Int_0^T (\rho(Y_s^n))^2\langle\nabla\rho(Y_s^n), Z_s^n\rangle^2 ds\Big)^{1/2}\Big]\\
&+ C\E\Big[\Big(\Int_0^T (\rho(Y_s^n))^2\langle\nabla\rho(Y_s^n), h_s(Y_s^n,Z_s^n)\rangle^2 ds\Big)^{1/2}\Big].
\end{split}
\ee
From the boundedness of $h$ and the fact that $|\nabla\rho(x)|^2=4\rho(x)$, we have
\be \label{estimate1}
\begin{split}
\E\Big[\Big(\Int_0^T (\rho(Y_s^n))^2&\langle\nabla\rho(Y_s^n)), h_s(Y_s^n,Z_s^n)\rangle^2 ds\Big)^{1/2}\Big]\leq C \E\Big[\Big(\Int_0^T (\rho(Y_s^n))^2\rho(Y_s^n) ds\Big)^{1/2}\Big]\\
&\leq C \E\Big[\underset{0\leq s\leq T}{\Sup}\big(\varphi(Y_s^n)\big) ^{1/2}\Big(\Int_0^T \rho(Y_s^n) ds\Big)^{1/2}\Big]\\
&\leq  \Frac{1}{4}\E\Big[\underset{0\leq s\leq T}{\Sup}\varphi(Y_s^n)\Big]+C^2 \E\Big[\Int_0^T (d(Y_s^n,D))^2ds\Big].
\end{split}
\ee
By the Holder's inequality, we obtain
\be \label{estimate2}
\begin{split}
\E\Big[\Big(\Int_0^T (\rho(Y_s^n))^2\langle\nabla\rho(Y_s^n)), Z_s^n\rangle^2 ds\Big)^{1/2}\Big]&\leq C \E\Big[\underset{0\leq s\leq T}{\Sup}\big(\varphi(Y_s^n)\big) ^{1/2}\Big(\Int_0^T\langle\nabla\rho(Y_s^n)), Z_s^n\rangle^2 ds\Big)^{1/2}\Big]\\
&\leq  \Frac{1}{4}\E\Big[\underset{0\leq s\leq T}{\Sup}\varphi(Y_s^n)\Big]+C^2 \E\Big[\Int_0^T\langle\nabla\rho(Y_s^n), Z_s^n\rangle^2 ds\Big].
\end{split}
\ee
Substituting \eqref{estimate1} and \eqref{estimate2} in \eqref{uniformestimate} leads to
\be \label{estimate3}
\begin{split}
\E[\underset{0\leq t\leq T}{\Sup}\varphi(Y_t^n)]&\leq C \E[\Int_0^T\varphi(Y_s^n) ds]+C\E\Big[\Int_0^T (d(Y_s^n,D))^{2}ds\Big]\\
&+C^2 \E\Big[\Int_0^T\langle\nabla\rho(Y_s^n), Z_s^n\rangle^2 ds\Big].
\end{split}
\ee
In the other hand, Hessian of $ \rho(Y_s^n)$ is a positive semidefinite matrix since $ \rho$ is a convex function, so we get that
$ \E\big[\Int_t^T trace [Z_s^nZ_s^{n *} \rho(Y_s^n)Hess \rho(Y_s^n)]ds\big] \geq 0$. By the equation \eqref{hessphi}, we can deduce that 
\be
2\E\Big[\Int_0^T\langle\nabla\rho(Y_s^n), Z_s^n\rangle^2 ds\Big]\leq \E\Big[\Int_0^T trace[Z_s^nZ_s^{n *} Hess \varphi(Y_s^n)]ds],
\ee   and from \eqref{estZ}, we get 
\b*
\E\Big[\Int_0^T\langle\nabla\rho(Y_s^n), Z_s^n\rangle^2 ds\Big]\longrightarrow 0 \, \text{as}\, n\rightarrow\infty.
\e*

Finally, by using \eqref{unifY}, \eqref{estimate3} and Lemma \ref{lem}, we get the desired result.\ep
\end{proof}
\begin{Remark}
Contrary to Gegout-Petit and Pardoux \cite{GPP95}, we have to prove the fundamental lemma with power $4$ instead of $2$. In fact, if we apply the genelized It\^o's formula to $\rho(Y_t^n)=d^2(Y_t^n,D)$, we obtain that
\be\begin{split} 
\rho(Y_t^n)&+ \Frac{1}{2}\Int_t^T trace[Z_s^nZ_s^{n *}Hess \rho(Y_s^n)]ds = \rho(\xi)
+ \Int_t^T (\nabla \rho(Y_s^n))^* f_s(Y_s^n,Z_s^n)ds\\&- \Int_t^T (\nabla \rho(Y_s^n))^* Z_s^n dB_s + \Int_t^T (\nabla \rho(Y_s^n))^* h_s(Y_s^n,Z_s^n)d\W_s\\
& + \Frac{1}{2}\Int_t^T trace[(h_sh_s^*)(Y_s^n,Z_s^n)Hess \rho(Y_s^n)]ds - n \Int_t^T (\nabla \rho(Y_s^n))^*(Y_s^n-\pi(Y_s^n))ds.
\end{split}
\ee
To prove the fundamental lemma, we need to estimate all the terms in the right hand side of the  above equation in terms of quantities which depend on $n$. But, since $Hess\rho$ is bounded we get 
\b* 
\begin{split}
\E\Big[\Int_t^T trace[(h_sh_s^*)(Y_s^n,Z_s^n)Hess \rho(Y_s^n)]ds]\leq C \E\Big[\Int_t^T\| h_s(Y_s^n,Z_s^n)\|^2 ds\Big].
\end{split}
\e*
Then, there is no hope to obtain the required convergence to $0$.
\end{Remark}
\begin{Lemma} \label{conv}
The sequence $(Y^n,Z^n)$ is a Cauchy sequence in ${\mathcal S}^2_{k}([0,T]) \times {\mathcal H}^2_{k\times d}([0,T])$, i.e.
$$\E[\underset{0\leq t\leq T}{\Sup}|Y_t^n- Y_t^m|^2 + \Int_0^T \|Z_t^n-Z_t^m\|^2dt]\longrightarrow 0 \,\text{as}\,\, n,m \rightarrow +\infty.$$
\end{Lemma}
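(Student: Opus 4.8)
The plan is to run the usual difference\,--of\,--solutions estimate on the penalized equations \eqref{BDSDEpen}, isolating the two finite variation terms $K^n,K^m$ as the only real difficulty. Fix $n,m\in\N$ and apply the generalized It\^o formula (Lemma 1.3 in \cite{pp1994}) to $|Y^n_t-Y^m_t|^2$, exactly as in the uniqueness identity \reff{itouni} (the terminal contribution vanishes since $Y^n_T=Y^m_T=\xi$). Using the Lipschitz Assumption \ref{Ass2} on $f$ and $h$ together with $2ab\le\eps a^2+\eps^{-1}b^2$ and the choice $\eps=2c/(1-\alpha)$, the $f$\,--\ and $h$\,--\ contributions are absorbed precisely as in the uniqueness part, so that after taking expectations one is reduced to
\b*
\E[|Y^n_t-Y^m_t|^2]+\Frac{1-\alpha}{2}\,\E\Big[\Int_t^T\|Z^n_s-Z^m_s\|^2ds\Big] &\leq& C\,\E\Big[\Int_t^T|Y^n_s-Y^m_s|^2ds\Big]\\
&&+\,2\,\E\Big[\Int_t^T(Y^n_s-Y^m_s)^*(dK^n_s-dK^m_s)\Big].
\e*
Everything hinges on showing the last (measure) term tends to $0$ as $n,m\to\infty$, after which Gronwall's lemma yields $\Sup_{t\le T}\E[|Y^n_t-Y^m_t|^2]\to0$ and $\E\int_0^T\|Z^n_s-Z^m_s\|^2ds\to0$, and a final Burkholder\,--Davis\,--Gundy estimate on the martingale terms of the It\^o identity upgrades the $Y$\,--\ convergence to the $\Sc^2$ sup\,--norm.

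For the cross term I would exploit convexity. Since $dK^n_s=-n(Y^n_s-\pi(Y^n_s))ds$ points along $Y^n_s-\pi(Y^n_s)$, splitting $Y^n_s-Y^m_s=(Y^n_s-\pi(Y^m_s))+(\pi(Y^m_s)-Y^m_s)$ and using $\pi(Y^m_s)\in\bar D$ in \reff{prop1} gives $(Y^n_s-\pi(Y^m_s))^*dK^n_s\le0$, whence $(Y^n_s-Y^m_s)^*dK^n_s\le d(Y^m_s,D)\,|dK^n_s|$; symmetrically $-(Y^n_s-Y^m_s)^*dK^m_s\le d(Y^n_s,D)\,|dK^m_s|$. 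Integrating and using $d(Y^m_s,D)\le\Sup_{s\le T}d(Y^m_s,D)$,
\b*
\E\Big[\Int_0^T(Y^n_s-Y^m_s)^*(dK^n_s-dK^m_s)\Big]\le \E\Big[\Sup_{s\le T}d(Y^m_s,D)\Int_0^T|dK^n_s|\Big]+\E\Big[\Sup_{s\le T}d(Y^n_s,D)\Int_0^T|dK^m_s|\Big].
\e*
Lemma \ref{fundamental:lemma} already gives $\E[\Sup_{s\le T}d^2(Y^m_s,D)]\to0$, so by Cauchy\,--Schwarz each term vanishes \emph{provided} the total variations $\Int_0^T|dK^n_s|$ are bounded in $L^2$ uniformly in $n$.

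That uniform bound on the total variation is the main obstacle, and I would obtain it from \reff{prop3}. With the interior point $a\in D$ and constant $\gamma>0$ of \reff{prop3}, $(Y^n_s-a)^*(-dK^n_s)=n(Y^n_s-a)^*(Y^n_s-\pi(Y^n_s))ds\ge\gamma\,n\,d(Y^n_s,D)ds=\gamma\,|dK^n_s|$, hence $\Int_0^T|dK^n_s|\le\gamma^{-1}\Int_0^T(Y^n_s-a)^*(-dK^n_s)$. Applying the generalized It\^o formula to $|Y^n_t-a|^2$ and isolating $\Int_0^T(Y^n_s-a)^*(-dK^n_s)$ expresses it through the bounded terminal value, the two (negatively signed) terms $-|Y^n_0-a|^2$ and $-\Int_0^T\|Z^n_s\|^2ds$, and the $f,h$ and stochastic\,--integral terms; squaring, taking expectations, and using Burkholder\,--Davis\,--Gundy together with the boundedness of $f,h$ (Assumption \ref{Ass3}(iii)) and the uniform $L^4$ a priori estimates behind Lemma \ref{fundamental:lemma} (cf. \eqref{estimateL4}, which control $\E[\Sup_{s\le T}|Y^n_s|^4]$ and $\E[(\Int_0^T\|Z^n_s\|^2ds)^2]$) yields $\Sup_n\E[(\Int_0^T|dK^n_s|)^2]\le C$. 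Plugging this into the previous display closes the estimate uniformly in the indices, and Gronwall plus Burkholder\,--Davis\,--Gundy finish the proof. The delicate point is thus not the convexity manipulation but securing the uniform $L^2$ control of the penalized total variation, which is exactly where the fourth\,--moment hypotheses of Assumption \ref{Ass3} are used.
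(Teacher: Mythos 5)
Your proposal is correct and follows essentially the same route as the paper: the same It\^o identity for $|Y^n_t-Y^m_t|^2$, reduction to the penalization cross term, a Cauchy--Schwarz bound combining the uniform $L^2$ control of the total variation $n\int_0^T d(Y^n_s,D)\,ds$ (this is exactly the paper's Lemma \ref{extraestimate}, which you re-derive via \reff{prop3} and the $L^4$ estimates just as in the paper's appendix) with the vanishing of $\E[\sup_s d^2(Y^m_s,D)]$ from Lemma \ref{fundamental:lemma}, then Gronwall and Burkholder--Davis--Gundy. The only cosmetic difference is that you handle the cross term by splitting $Y^n_s-Y^m_s$ through $\pi(Y^m_s)$ and invoking \reff{prop1}, whereas the paper applies \reff{prop2} directly; both manipulations produce the identical bound $d(Y^m_s,D)\,n\,d(Y^n_s,D)\,ds$.
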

\begin{proof}
For all $n,m\geq 0$, we apply It\^o formula to $|Y_t^n-Y_t^m|^2$
\be \label{itoexis}
\begin{split}
|Y_t^n-Y_t^m|^2&+\Int_t^T \|Z_s^n-Z_s^m\|^2ds  =  2\Int_t^T (Y_s^n-Y_s^m)^*(f_s(Y_s^n,Z_s^n)-f_s(Y_s^m,Z_s^m))ds\\
& +2\Int_t^T (Y_s^n-Y_s^m)^*(h_s(Y_s^n,Z_s^n)-h_s(Y_s^m,Z_s^m))d\W_s - 2\Int_t^T (Y_s^n-Y_s^m)(Z_s^n-Z_s^m)dB_s\\
& +\Int_t^T \|h_s(Y_s^n,Z_s^n)-h_s(Y_s^m,Z_s^m)\|^2ds- 2n\Int_t^T (Y_s^n-Y_s^m)^*(Y_s^n-\pi(Y_s^n))ds\\
&+2m\Int_t^T (Y_s^n-Y_s^m)^*(Y_s^m-\pi(Y_s^m))ds.
\end{split}
\ee
By the property (\ref{prop2}), we have 
\begin{eqnarray}
\begin{split}
- 2n\Int_t^T (Y_s^n-Y_s^m)^*(Y_s^n-\pi(Y_s^n))ds &+ 2m\Int_t^T (Y_s^n-Y_s^m)^*(Y_s^m-\pi(Y_s^m))ds\\
&\leq 2(n+m)\Int_t^T (Y_s^n-\pi(Y_s^n))^*(Y_s^m-\pi(Y_s^m))ds.\\
&
\end{split}
\end{eqnarray}
Hence, from the Lipschitz continuity on $f$ and $h$, and taking expectation yields
\begin{eqnarray}\label{itoestimate}
\begin{split}
\E[|Y_t^n-Y_t^m|^2] + \E[\Int_t^T \|Z_s^n-Z_s^m\|^2ds]&\leq  2\E[\Int_t^T C(|Y_s^n-Y_s^m|^2+ |Y_s^n-Y_s^m|\|Z_s^n-Z_s^m\|)ds]\\
&+\E[\Int_t^T C(|Y_s^n-Y_s^m|^2+ \alpha\|Z_s^n-Z_s^m\|{\color{blue}{^2}})ds]\\
&+ 2(n+m)\E[\Int_t^T (Y_s^n-\pi(Y_s^n))^*(Y_s^m-\pi(Y_s^m))ds].
\end{split}
\end{eqnarray}
For the last term, we need the following lemma whose proof is postponed in the Appendix.
\begin{Lemma}\label{extraestimate}
There exists a constant $C>0$ such that, for each $n\geq 0$,
\be 
 \E\big[\Big(n\Int_0^T d(Y_s^n,D) ds\Big)^2\big]\leq C
 \label{boundvariation}
\ee
\end{Lemma}
\vspace{0.5cm}
\noindent Now we can deduce from the H\"older inequality and Lemma \ref{extraestimate} that 
\be \label{estdistance}
\begin{split} 
n\E[\Int_t^T &(Y_s^n-\pi(Y_s^n))^*(Y_s^m-\pi(Y_s^m))ds] \leq  n\E[\Int_t^T  d(Y_s^n,D)d(Y_s^m,D))ds]\\
&\leq n\E[\underset{0\leq s\leq T}{\Sup}d(Y_s^m,D)\Int_t^T  d(Y_s^n,D)ds)]\\
 &\leq \Big(\E\big[\Big(n\Int_0^T d(Y_s^n,D) ds\Big)^2\big]\Big)^{1/2} \Big(\E[\underset{0\leq s\leq T}{\Sup}d^2(Y_s^m,D)]\Big)^{1/2}\\
&\leq C\Big(\E[\underset{0\leq s\leq T}{\Sup}d^2(Y_s^m,D)]\Big)^{1/2}.
\end{split}
\ee
Substituting (\ref{estdistance}) in the previous inequality \eqref{itoestimate}, we have
\b*
\begin{split}
\E[|Y_t^n-Y_t^m|^2] &+ (1-\alpha-C\gamma)\E[\Int_t^T \|Z_s^n-Z_s^m\|^2ds]\leq  C(1+\Frac{1}{\gamma})\E[\Int_t^T |Y_s^n-Y_s^m|^2ds]\\
&+ C\Big(\E[\underset{0\leq s\leq T}{\Sup}d^2(Y_s^n,D)]\Big)^{1/2}+ C\Big(\E[\underset{0\leq s\leq T}{\Sup}d^2(Y_s^m,D)]\Big)^{1/2}.
\end{split}
\e*
Choosing $1-\alpha-C\gamma>0$, by Gronwall's lemma, we obtain
\be
\underset{0\leq t\leq T}{\Sup}\E[|Y_t^n-Y_t^m|^2]\leq C\Big(\E[\underset{0\leq s\leq T}{\Sup}d^2(Y_s^m,D)]\Big)^{1/2}+ C\Big(\E[\underset{0\leq s\leq T}{\Sup}d^2(Y_s^n,D)]\Big)^{1/2}.
\ee
We deduce similarly
\be
 \E[\Int_0^T \|Z_s^n-Z_s^m\|^2ds]\leq C\Big(\E[\underset{0\leq s\leq T}{\Sup}d^2(Y_s^m,D)]\Big)^{1/2}+ C\Big(\E[\underset{0\leq s\leq T}{\Sup}d^2(Y_s^n,D)]\Big)^{1/2}.
\label{estunifZ}
\ee
Next, by \eqref{itoexis}, the Burkholder-Davis-Gundy and the Cauchy-Schwarz inequalities we get 
\begin{align*}
&\E[\underset{0\leq t\leq T}{\Sup}|Y_t^n-Y_t^m|^2]\leq C\E[\Int_0^T | Y_s^n-Y_s^m||f(s,Y_s^n,Z_s^n)-f(s,Y_s^m,Z_s^m)|ds]\\
&+C\E\big(\Int_0^T | Y_s^n-Y_s^m|^2\|h_s(Y_s^n,Z_s^n)-h_s(Y_s^m,Z_s^m)\|^2ds\big)^{1/2}+C\E\big(\Int_0^T | Y_s^n-Y_s^m|^2\| Z_s^n-Z_s^m\|^2ds\big)^{1/2}\\&+\E[\Int_0^T C(|Y_s^n-Y_s^m|^2+ \alpha\|Z_s^n-Z_s^m\|^2)ds]+  2(n+m)\E[\Int_0^T (Y_s^n-\pi(Y_s^n))^*(Y_s^m-\pi(Y_s^m))ds].
\end{align*}
Then, it follows by the Lipschitz Assumption \ref{Ass2} on $f$ and $h$ and  (\ref{estdistance}) that for any $n,m\geq 0$
\b*
\begin{split}
\E[\underset{0\leq t\leq T}{\Sup}|Y_t^n-Y_t^m|^2]&\leq  C\Big(\E[\underset{0\leq s\leq T}{\Sup}d^2(Y_s^m,D)]\Big)^{1/2}+C\Big(\E[\underset{0\leq s\leq T}{\Sup}d^2(Y_s^n,D)]\Big)^{1/2}\\
&+ C\E(\underset{0\leq t\leq T}{\Sup}|Y_t^n-Y_t^m|^2\Int_0^T \|Z_s^n-Z_s^m\|^2ds)^{1/2}\\
&\leq  C\Big(\E[\underset{0\leq s\leq T}{\Sup}d^2(Y_s^m,D)]\Big)^{1/2}+C\Big(\E[\underset{0\leq s\leq T}{\Sup}d^2(Y_s^n,D)]\Big)^{1/2}\\
&+ C\varepsilon\E(\underset{0\leq t\leq T}{\Sup}|Y_t^n-Y_t^m|^2)+ C\varepsilon^{-1}\E(\Int_0^T \|Z_s^n-Z_s^m\|^2ds).
\end{split}
\e*
Choosing $1-C\varepsilon>0$ and from the inequality (\ref{estunifZ}) we conclude that
\b*
\begin{split}
\E[\underset{0\leq t\leq T}{\Sup}|Y_t^n-Y_t^m|^2] &\leq C\Big(\E[\underset{0\leq s\leq T}{\Sup}d^2(Y_s^m,D)]\Big)^{1/2}+C\Big(\E[\underset{0\leq s\leq T}{\Sup}d^2(Y_s^n,D)]\Big)^{1/2}\\
&\leq C\Big(\E[\underset{0\leq s\leq T}{\Sup}d^4(Y_s^m,D)]\Big)^{1/4}+C\Big(\E[\underset{0\leq s\leq T}{\Sup}d^4(Y_s^n,D)]\Big)^{1/4}\longrightarrow 0,
\end{split}
\e*
as $n,m\rightarrow \infty$, where Lemma \ref{fundamental:lemma} has been used.\ep
\end{proof}
\vspace{0.6cm}
\noindent Finally, we conclude that $(Y^n,Z^n)$ is a Cauchy sequence in ${\Sc}^2_k([0,T])\times {\Hc}^2_{k\times d}([0,T])$ and therefore there exists a unique pair $(Y_t,Z_t)$  of $\Fc_t$- measurable processes which valued in $\R^k\times\R^{k\times d}$, satisfying
\be
\E(\underset{0\leq t\leq T}{\Sup} |Y_t^n-Y_t|^2 + \Int_0^T |Z_t^n-Z_t|^2 dt) \rightarrow 0 \quad \mbox{as}~~ n\rightarrow\infty.
\label{converg}
\ee
Consequently, since for any $n\geq 0$ and $0\leq t\leq T$,
\be
\begin{split}
K_t^n-K_t^m &= Y_0^n-Y_0^m-Y_t^n-Y_t^m-\Int_0^t (f_s(Y_s^n,Z_s^n)-f_s(Y_s^m,Z_s^m))ds\\
&-\Int_0^t (h_s(Y_s^n,Z_s^n)-h_s(Y_s^m,Z_s^m))d\W_s
 +\Int_0^t (Z_s^n-Z_s^m) dB_s.
\end{split}
\ee
we obtain from (\ref{converg}) and Burkholder-Davis-Gundy inequality,
\be
\E(\underset{0\leq t\leq T}{\Sup} |K_t^n-K_t^m|^2)\rightarrow 0 \quad \mbox{as}~~ n, m\rightarrow\infty.
\ee
Hence, there exists a $\Fc_t$- adapted continuous process $(K_t)_{0\leq t\leq T}$ ( with $K_0=0$) such that $$\E(\underset{0\leq t\leq T}{\Sup} |K_t-K_t^n|^2 )\rightarrow 0 \quad \mbox{as}~~ n\rightarrow\infty.$$
Furthermore, \eqref{boundvariation} shows that the total variation of $K^n$ is uniformly bounded. Thus, $K$ is also of uniformly bounded variation.
Passing to the limit in \eqref{BDSDEpen}, the processes $(Y_t,Z_t,K_t)_{0\leq t\leq T}$ satisfy 
$$Y_t  = \xi +\Int_t^T f_s(Y_s,Z_s)ds +\Int_t^T h_s(Y_s,Z_s)d\W_s -\Int_t^T Z_sdB_s +K_T-K_t ~,~ 0\leq t\leq T.$$
Since we have from Lemma \ref{fundamental:lemma} that $Y_t$ is in $\bar{D}$, it remains to check the minimality property for $ (K_t)$, namely i.e., for any continuous progressively measurable process $(z_t)$ valued in $\bar{D}$, $$\Int_0^T (Y_t-z_t)^* dK_t \leq 0.$$
We note that (\ref{prop1}) gives us 
$$\Int_0^T (Y_t^n-z_t)^*dK_t^n= -n\Int_0^T (Y_t^n-z_t)^*(Y_t^n-\pi(Y_t^n))dt \leq 0.$$
Therefore, we will show that we can extract a subsequence such that $\Int_0^T (Y_t^n-z_t)^* dK_t^n$ converge a.s. to $\Int_0^T (Y_t-z_t)^* dK_t.$ Following the proof of Lemma \ref{estapriori} in Appendix, we have
\be
2\gamma \|K^n\|_{VT}&\leq & |\xi -a|^2 +2\Int_0^T (Y_s^n-a)f_s(Y_s^n,Z_s^n)ds + 2\Int_0^T  (Y_s^n-a) h_s(Y_s^n,Z_s^n)d\W_s\nonumber\\
&+& \Int_0^T  \|h_s(Y_s^n,Z_s^n)\|^2ds - 2\Int_0^T(Y_s^n-a)Z_s^n dB_s.
\ee
Notice that the right hand side tends in probability as $n$ goes to infinity  to $$|\xi -a|^2 +2\Int_0^T (Y_s-a)f_s(Y_s,Z_s)ds + 2\Int_0^T  (Y_s-a) h_s(Y_s,Z_s)d\W_s + \Int_0^T \|h_s(Y_s,Z_s)\|^2ds - 2\Int_0^T(Y_s-a)Z_s dB_s.$$ 
Thus, there exists a subsequence $(\phi(n))_{n\geq 0}$ such that the convergence is almost surely and $\|K^{\phi(n)}\|_{VT}$ is bounded. Moreover, due to the convergence in $\L^2$ of $\underset{0\leq t\leq T}{\Sup} |Y_t^n-Y_t|^2$ to $0$, we can extract a subsequence  from $(\phi(n))_{n\geq 0}$ such that $Y^{\phi(\psi(n))}$ converges uniformly to $Y$. Hence, we apply  Lemma 5.8 in \cite{GPP95} and we obtain
$$\Int_0^T(Y_t^{\phi(\psi(n))}-z_t)^*dK_t^{\phi(\psi(n)))}\longrightarrow\Int_0^T(Y_t -z_t)^*dK_t\quad a.s. ~~\mbox{as}~~ n\rightarrow\infty$$
which is  the required result.
\ep
\section{Weak solution of semilinear SPDE in a convex domain}
\label{section:SPDE}
The aim of this section is to give a Feynman-Kac's formula for the weak solution of a  semilinear reflected SPDEs  \eqref{OSPDE1}  in a given convex domain $D$
via Markovian class of RBDSDEs  studied in the last section.  As explained in the introduction, the solution  of such SPDEs  is expressed as a pair  $(u,\nu) $ where 
$u$ is a predictable continuous process which takes values in a  Sobolev space and $\nu$ is a signed Radon regular measure. The bounded variation processes $K $ component of the  solution of the reflected BDSDE  controls the set when   $ u$ reaches the boundary of $D$. In fact, this bounded variation process determines the measure $\nu$ from a particular relation by using the inverse of the flow associated to the  diffusion operator. 
\subsection{Notations and Hypothesis}
Let us first introduce some notations:\\
- $C^n_{l,b}(\R^p,\R^q)$ the set of $C^n$-functions which grow at most linearly at infinity and whose partial derivatives of order less than or equal to $n$ are bounded.\\
- $\mathbf{L}_{w}^2\left( \mathbb{{R}}^d\right) $ will be a Hilbert with the inner product,
$$ \left( u,v\right)_{w} =\int_{\mathbb{R}^d}u\left( x\right) v\left(
x\right) w (x) dx,\;\left\| u\right\| _2=\left(
\int_{\mathbb{R}^d}u^2\left( x\right) w (x) dx\right) ^{\frac
12}. $$ 
\vspace{0.1cm}
\noindent
\begin{Assumption}\label{assweight}
We assume that $ w$ is the weight function that satisfy the following conditions:
\begin{itemize}
\item $w$ is a continuous positive function.
\item $w$ is integrable and $ \Frac{1}{w}$ is locally integrable.
\end{itemize}
\end{Assumption}
\vspace{0.2cm}
\noindent In general, we shall use for the usual $L^2$-scalar product
$$(u,v)=\Int_{\mathbb{R}^d} u(x)v(x)\, dx,$$
where $u$, $v$ are measurable functions defined in $\mathbb{R }^d$
and $uv \in \mathbf{L}^1 (\mathbb{R}^d )$.\\
Our evolution problem will be considered over a fixed time interval
$[0,T]$ and the norm for an element of $\mathbf{L}_{w}^2\left(
[0,T] \times \mathbb{{R}}^d\right) $ will be denoted by
$$\left\| u\right\| _{2,2}=\left(\Int_0^T  \int_{\mathbb{R}^d} |u (t,x)|^2 w(x)dx dt \right)^{\frac 12}. $$
We assume the following hypotheses :
\begin{Assumption}\label{assxi}
$\Phi:\R^d\rightarrow\R^k$ belongs to $\mathbf{L}_{w}^2(\R^d)$ and $\Phi(x)\in\bar{D}~~ a.e. ~\forall x\in\R^d$;
\end{Assumption}
\begin{Assumption} \label{assgener}
\begin{itemize} 
\item[\rm{(i)}] $f:[0,T]\times \R^d\times \R^k\times \R^{k\times
d}\rightarrow\R{\color{blue}{^k}}$ and $h:[0,T]\times \R^d\times \R^k\times \R^{k\times
d}\rightarrow\R^{k\times l}$ are measurable in $(t,x,y,z)$ and
satisfy $ f^0, h^0 \in \mathbf{L}_{w}^2\left( [0,T] \times
\mathbb{{R}}^d\right) $ where $f_t^0 (x) := f (t,x,0, 0)$, $h_t^0 := h (t,
,x,0, 0)$.
\item[\rm{(ii)}] There exist constants $c>0$ and $0<\alpha<1$ such that for any $(t,x)\in[0,T]\times\R^d~;~
(y_1,z_1),(y_2,z_2)\in\R^k\times\R^{k\times d}$
\b*
|f_t(x,y_1,z_1)-f_t(x,y_2,z_2)|^2 &\leq & c\big(|y_1-y_2|^2+\|z_1-z_2\|^2\big)\\
\|h_t(x,y_1,z_1)-h_t(x,y_2,z_2)\|^2 &\leq & c|y_1-y_2|^2+\alpha \|z_1-z_2\|^2.
\e*
\end{itemize}
\end{Assumption}
\begin{Assumption}\label{assdiff}
The coefficients $b$ and $\sigma$ of the second order differential operator $\Lc$ (\ref{operator conv}) satisfy $b\in C^2_{l,b}(\R^d;\R^d)$, $\sigma\in C^3_{l,b}(\R^d;\R^{d\times d}).$
\end{Assumption}
\begin{Assumption}\label{assint}
\begin{itemize}
\item[\rm{(i)}] $\Phi\in \mathbf{L}_{w}^4(\R^d).$
\item[\rm{(ii)}] There exist $c>0$ and  $0\leq \beta < 1$ such that for all $(t,x,y,z)\in [0,T]\times\R^k\times\R^{k\times d}$
$$h_t \, h_t^* (x,y,z)\leq c (Id_{\R^k}+ yy^*)+ \beta  \, zz^*.$$
\item[\rm{(iii)}] $f$ and $h$ are uniformly bounded in $(x,y,z)$.
\end{itemize}
\end{Assumption}
\subsection{Weak formulation for a solution of Stochastic  PDEs}
\label{definition:solution}
The space of test functions which we employ in the definition of
weak solutions of the evolution equations  \eqref{SPDE1} is $
\mathcal{D}_T  = \mathcal{C}^{\infty} (\left[0,T]\right) \otimes
\mathcal{C}_c^{\infty} \left(\mathbb{R}^d\right)$, where
\begin{itemize}
\item $\mathcal{C}^{\infty} \left([0,T]\right)$ denotes the space of real
functions which can be extended as infinitely differentiable functions
in the neighborhood of $[0,T]$, 
\item $\mathcal{C}_c^{\infty}\left(\mathbb{R}^d\right)$ is the space of
infinite differentiable functions with compact supports in
$\mathbb{R}^d$.
\end{itemize}
We denote by  $ {\mathcal H}_T$ the space of  $ \Fc_{t,T}^W$-progressively measurable  processes  $(u_t ) $ with values  in the weighted Sobolev space $ H_{w} ^1 (\R^d)$ where 
$$ H_{w} ^1 (\R^d):=\{v \in \mathbf{L}_{w}^2(\R^d) \; \big|\; \nabla v\sigma\in \mathbf{L}_{w}^2(\R^d)\} $$
endowed with the norm
$$\begin{array}{ll}
\|u\|_{{\mathcal H}_T}^2=
 \E \,  \big[\underset{ 0 \leq t \leq T}{\Sup} \|u_s \|_2^2 +   \Int_{ \mathbb{R}^d} \Int_0^T  |\nabla
u_s (x)\sigma(x)|^2 ds w(x)dx \big],
\end{array}
$$
where we denote the gradient by $\nabla u (t,x) = \big(\partial_1 u
(t,x), \cdot \cdot \cdot, \partial_d u (t,x) \big)$. Here, the derivative is defined in the weak sense (Sobolev sense).
\begin{Definition}[{\textbf{Weak solution of regular SPDE}}]
We say that $ u \in \mathcal{H}_T $ is a Sobolev solution of SPDE  $\left(
\ref{SPDE1}\right) $ if the following
relation holds, for each $\varphi \in \mathcal{D}_T ,$
\begin{equation}\label{wspde1}
\begin{array}{ll}
\displaystyle\int_t^T(u(s,x),\partial_s\varphi(s,x))ds+(u(t,x),\varphi(t,x))-(\Phi(x),\varphi(T,x))
-\int_t^T( u(s,x),\mathcal L^\ast \varphi(s,x))ds
\\=\displaystyle\int_t^T(f_s(x,u(s,x),\nabla u(s,x)\sigma(x)),\varphi(s,x))ds +\displaystyle\int_t^T(h_s(x,u(s,x),\nabla u(s,x)\sigma(x)),\varphi(s,x))d\W_s.
\end{array}
\end{equation}
where ${\mathcal L}^\ast$ is the adjoint operator of ${\mathcal L}$.
We denote by $ u:=\mathcal{U }(\Phi, f,h)$ the solution of SPDEs with data $(\Phi,f,h)$.
\end{Definition}
\vspace{0.2cm}
\noindent The existence and uniqueness  of weak solution  for SPDEs \eqref{wspde1} is ensured by Theorem 3.1 in Bally and Matoussi \cite{BM01} or Denis and Stoica \cite{DS04}.
\subsection{Stochastic flow of diffeomorphisms and random test functions}
\label{Flow} We are concerned in this part with solving our problem by developing a stochastic flow method which was first introduced in Kunita \cite{K84}, \cite{K90} and Bally, Matoussi
\cite{BM01}. We recall that  $\{X_{t,s}(x), t\leq s\leq T\}$ is the diffusion process starting from $x$ at time $t$ and is the strong solution  of the equation:
 \begin{equation}\label{sde}
  X_{t,s}(x)=x+\Int_{t}^{s}b(X_{t,r}(x))dr+\Int_{t}^{s}\sigma(X_{t,r}(x))dB_r.
\end{equation}
The existence and uniqueness of this solution was proved in Kunita \cite{K84}. Moreover, we have the following properties:
\begin{Proposition}\label{estimatesde}
Under Assumption \ref{assdiff} and for each $t>0$, there exists a version of $\{X_{t,s}(x);\,x\in
\R^d,\,s\geq t\}$ such that $X_{t,s}(\cdot)$ is a $C^2(\R^d)$-valued
continuous process which satisfi{\color{blue}{es}} the flow property: $X_{t,r}(x)=X_{s,r} \circ X_{t,s} (x)$, $0\leq t<s<r$.
Furthermore,
for all $p\geq 2$, there exists $M_p$ such that for all $0\leq t<s$,
$x,x'\in\R^d$, $h,h'\in\R\backslash{\{0\}}$,
$$\begin{array}{ll}
\E(\underset{t\leq r\leq s}{\Sup}|X_{t,r}(x)-x|^p)\leq
M_p(s-t)(1+|x|^p),\\
\E(\underset{t\leq r\leq s}{\Sup}|X_{t,r}(x)-X_{t,r}(x')-(x-x')|^p)\leq
M_p(s-t)(|x-x'|^p),\\
\E(\underset{t\leq r\leq s}{\Sup}|\Delta_h^i[X_{t,r}(x)-x]|^p)\leq
M_p(s-t),\\
\E(\underset{t\leq r\leq s}{\Sup}|\Delta_h^i X_{t,r}(x)-\Delta_{h'}^i
X_{t,r}(x')|^p)\leq M_p(s-t)(|x-x'|^p+|h-h'|^p),
\end{array}$$
where $\Delta_h^ig(x)=\frac{1}{h}(g(x+he_i)-g(x))$, and
$(e_1,\cdots,e_d)$ is an orthonormal basis of $\R^d$.
\end{Proposition}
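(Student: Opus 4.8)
The plan is to treat Proposition \ref{estimatesde} as a collection of standard facts about stochastic flows and to prove them by combining $L^p$-moment estimates (obtained via the Burkholder-Davis-Gundy and Gronwall inequalities) with Kolmogorov's continuity criterion, following Kunita \cite{K84}. Under Assumption \ref{assdiff} the coefficients $b$ and $\sigma$ are globally Lipschitz with linear growth, so for each fixed $(t,x)$ the SDE \eqref{sde} admits a unique strong solution $\{X_{t,s}(x),\, s\geq t\}$ by the classical Picard iteration argument; the whole point of the proposition is then to control the dependence on the starting point $x$.

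First I would establish the four moment bounds. For the first one, I would write $X_{t,r}(x)-x=\int_t^r b(X_{t,u}(x))\,du+\int_t^r\sigma(X_{t,u}(x))\,dB_u$, raise the supremum over $r\in[t,s]$ to the power $p$, take expectations, and control the stochastic-integral term by the Burkholder-Davis-Gundy inequality; the linear growth of $b$ and $\sigma$ then yields a Gronwall inequality for $\E[\sup_{t\le r\le s}|X_{t,r}(x)-x|^p]$ that closes and produces the factor $(s-t)(1+|x|^p)$. For the second estimate I would set $\xi_{t,r}=X_{t,r}(x)-X_{t,r}(x')-(x-x')$, note that $\xi_{t,t}=0$, and observe that $\xi_{t,r}$ solves an SDE whose integrands are the increments $b(X_{t,u}(x))-b(X_{t,u}(x'))$ and $\sigma(X_{t,u}(x))-\sigma(X_{t,u}(x'))$; using the Lipschitz bound on these increments together with the preliminary estimate $\E[\sup_{t\le r\le s}|X_{t,r}(x)-X_{t,r}(x')|^p]\le C|x-x'|^p$ (itself obtained by the same BDG--Gronwall scheme) gives the bound $M_p(s-t)|x-x'|^p$.

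The third and fourth estimates concern the difference quotients $\Delta_h^iX_{t,r}(x)$, which satisfy SDEs obtained by subtracting the equations for $X_{t,r}(x+he_i)$ and $X_{t,r}(x)$ and dividing by $h$. Since $b\in C^2_{l,b}$ and $\sigma\in C^3_{l,b}$ have bounded derivatives, the resulting integrands are controlled through mean-value expansions of the first derivatives of $b$ and $\sigma$, and the same BDG--Gronwall machinery yields the uniform bound for the third estimate and the joint Hölder-type bound in $(x,h)$ for the fourth. Throughout, the constants depend on $p$ but not on $(t,x,x',h,h')$, which is what allows the subsequent passage to a continuous version.

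Finally, I would combine these bounds with Kolmogorov's continuity criterion. The second and fourth estimates provide Hölder control, in the spatial variable, of $X_{t,r}(x)$ and of its difference quotients; since $p$ is arbitrary, this produces a version $x\mapsto X_{t,r}(x)$ that is jointly continuous in $(r,x)$ and of class $C^2$, with differentiability following because the difference quotients converge (in $L^p$, and uniformly after passage to the continuous version) to the solution of the linearized SDE. The flow property $X_{t,r}(x)=X_{s,r}\circ X_{t,s}(x)$ for $t<s<r$ is then a direct consequence of strong uniqueness for \eqref{sde}: both sides solve \eqref{sde} on $[s,r]$ started from $X_{t,s}(x)$ at time $s$. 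I expect the main obstacle to be the spatial regularity, namely upgrading the $L^p$ difference-quotient bounds to an almost-sure $C^2$ version and verifying that the limiting derivative process is genuinely the derivative of the flow; this is precisely the step where the higher smoothness $\sigma\in C^3_{l,b}$, $b\in C^2_{l,b}$ and a careful application of Kolmogorov's theorem are indispensable.
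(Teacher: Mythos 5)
Your proposal is essentially correct, but it should be said up front that the paper itself contains no proof of this proposition: the statement is quoted as a known result, with existence, uniqueness and all four moment estimates attributed to Kunita \cite{K84} (the surrounding text explicitly says the proof "can be found in \cite{K84}"). What you have written is therefore a reconstruction of the argument in the cited reference rather than an alternative to anything in the paper, and as such it follows the standard route faithfully: Picard iteration for strong well-posedness, Burkholder--Davis--Gundy plus H\"older plus Gronwall for the four $L^p$ bounds, and Kolmogorov's continuity criterion in the spatial variables to produce the regular version, with the flow property coming from strong uniqueness. Two refinements are worth recording. First, the factor $(s-t)$ (rather than $(s-t)^{p/2}$) in your BDG estimates only comes out after absorbing $(s-t)^{p/2-1}\leq T^{p/2-1}$ into $M_p$, i.e.\ it uses the bounded horizon $[0,T]$; also, your third estimate needs no separate SDE argument, since $\Delta_h^i[X_{t,r}(x)-x]$ is exactly the quantity in the second estimate with $x\mapsto x+he_i$, $x'\mapsto x$, divided by $h$. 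Second, and more substantively, the four listed inequalities control only first-order difference quotients, so Kolmogorov's criterion applied to them yields a $C^1$ (indeed $C^{1,\beta}$) version, not yet the $C^2(\R^d)$-valued process claimed; to reach $C^2$ one must iterate the whole scheme for the derivative flow $\nabla X_{t,\cdot}(x)$, i.e.\ prove the analogous moment bounds for its difference quotients, which is where $b\in C^2_{l,b}$, $\sigma\in C^3_{l,b}$ from Assumption \ref{assdiff} are genuinely consumed. You flag this as the main obstacle, correctly, but your sketch does not carry out that second iteration; in the paper's logic this gap is immaterial because the whole proposition is delegated to \cite{K84}, where precisely this iteration is performed.
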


\noindent {  For  all $v\in \mathbf{L}^2(\R^d)$, the process $(v\circ X_{t,s} (x))_{t \leq s \leq T}$ defined as the composition of $v$
with the stochastic flow  $ (X_{t,s}(x))_{t \leq s \leq T}$  is   well defined thanks to  the  following  equivalence norms results
(see  \cite{BL97} and \cite{BM01} for the proofs. 
\begin{Proposition}\label{equivalence:normes conv}
There exists two constants $c>0$ and $C>0$ such that for every
$t\leq s\leq T$ and $\varphi\in L^1(\R^d,dx)$,
\begin{equation}\label{equi1 conv} c\Int_{\R^d}|\varphi(x)|w(x)dx\leq
\Int_{\R^d}\E(|\varphi(X_{t,s}(x))|)w(x)dx\leq
C\Int_{\R^d}|\varphi(x)|w(x)dx. 
\end{equation} 
Moreover, for
every $\Psi\in L^1([0,T]\times\R^d,dt\otimes dx)$,
\begin{equation} \label{equi2 conv}
c\Int_{\R^d}\Int_t^T|\Psi(s,x)|ds w(x)dx \leq
\Int_{\R^d}\int_t^T\E(|\Psi(s,X_{t,s}(x))|)ds w(x)dx\leq
C\Int_{\R^d}\Int_t^T|\Psi(s,x)|ds w(x)dx.
\end{equation}
 \end{Proposition}
 This  equivalence of norms result  plays also  an important role in the proof of the existence of the solution for SPDE as a connection between the functional norms for such solutions and random norms from  BDSDE solutions.
 
Under regular conditions (Assumption \ref{assdiff}) on the diffusion, it is known that the stochastic flow  solution of a continuous SDE satisfies the homeomorphic property (see Bismut \cite{b}, Kunita \cite{K84}, \cite{K90}). We have  the following result whose proof can be found in  \cite{K84}.
\begin{Proposition}\label{flow}
Let Assumption \ref{assdiff} holds. Then
$\{X_{t,s}(x); x \in \mathbb{R}^d \}$ is a $C^2$-diffeomorphism a.s.
stochastic flow. Moreover the inverse of the flow satisfies the
following backward SDE
\begin{equation}\label{inverse:flow}
\begin{split}
X_{t,s}^{-1}(y) &  = y - \int_t^s \widehat{b}(X_{r,s}^{-1}(y)) dr  -
\int_t^s \sigma (X_{r,s}^{-1} (y)) d\B_r .
\end{split}
\end{equation}
for any  $t<s$,  where
\begin{equation}
\label{drift:backward}
\begin{split}
\widehat{b}(x) =  b(x) -  \sum_{i,j}\frac{\partial \sigma^j (x)
}{\partial x_i}  \sigma^{ij} (x) .
\end{split}
\end{equation}
\end{Proposition}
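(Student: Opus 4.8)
The plan is to treat the two assertions of Proposition \ref{flow} separately. For the diffeomorphism property I would rely on Kunita's flow theory. Under Assumption \ref{assdiff} the coefficients $b\in C^2_{l,b}$ and $\sigma\in C^3_{l,b}$ are smooth with bounded derivatives of the required order, so the moment bounds recorded in Proposition \ref{estimatesde} show that $X_{t,s}(\cdot)$ admits a version that is $C^2$ in space, uniformly on compacts in $(s,x)$. To upgrade this to a global $C^2$-diffeomorphism I would verify the three hypotheses of Kunita's homeomorphism theorem: first, injectivity, from a lower bound on $|X_{t,s}(x)-X_{t,s}(x')|$ obtained by applying It\^o's formula to $|X_{t,s}(x)-X_{t,s}(x')|^{-p}$ together with the second estimate of Proposition \ref{estimatesde}; second, properness and surjectivity onto $\R^d$, from the linear-growth bound $\E\sup_r|X_{t,r}(x)-x|^p\le M_p(s-t)(1+|x|^p)$; and third, smoothness of the inverse, which follows pathwise from the inverse function theorem once one knows that the Jacobian $\nabla X_{t,s}(x)$ solves a linear matrix SDE and is therefore almost surely invertible. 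All of this is classical and I would cite \cite{K84,K90,b}.

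The substantive point is the backward equation \eqref{inverse:flow}. The difficulty is that for fixed $s$ the process $\eta_t:=X_{t,s}^{-1}(y)$ anticipates the driving noise on $[t,s]$ and hence is not a forward semimartingale; the natural calculus is backward It\^o integration against $d\B$. My preferred conceptual route is the Stratonovich one. I would first rewrite the forward equation \eqref{sde} in Stratonovich form, $dX=b^S(X)\,ds+\sigma(X)\circ dB$, with $b^{S,m}=b^m-\tfrac12\sum_{i,j}\partial_i\sigma^{mj}\,\sigma^{ij}$. Because Stratonovich calculus obeys the ordinary chain rule, the inverse flow solves, in the backward variable, the negated time-reversed Stratonovich equation $d\eta=-b^S(\eta)\,dt-\sigma(\eta)\circ d\B$. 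Converting this backward Stratonovich integral back to a backward It\^o integral introduces a correction of sign opposite to the forward one, so that the two half-corrections add rather than cancel; this produces exactly the full drift $\widehat b=b-\sum_{i,j}\partial_i\sigma^{\cdot j}\,\sigma^{ij}$ of \eqref{drift:backward}, yielding \eqref{inverse:flow}.

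Alternatively, and this is the verification I would actually write down rigorously, I would take \eqref{inverse:flow} as the definition of a process $\eta$: since $\widehat b$ and $\sigma$ inherit the Lipschitz and linear-growth bounds from Assumption \ref{assdiff}, the backward SDE \eqref{inverse:flow} has a unique solution $\eta_t$ with terminal value $\eta_s=y$. It then suffices to prove $X_{t,s}(\eta_t)=y$ for all $t\le s$. I would establish this by applying a generalized It\^o--Wentzell formula to the composition $t\mapsto X_{t,s}(\eta_t)$, in which both the backward semimartingale field $t\mapsto X_{t,s}$ and the backward semimartingale $t\mapsto\eta_t$ must be differentiated, and by checking that the quadratic-covariation term between the field and $\eta$ exactly cancels the gap between $b$ and $\widehat b$, leaving $dX_{t,s}(\eta_t)=0$. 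Since the value at $t=s$ equals $y$, injectivity of the forward flow then forces $\eta_t=X_{t,s}^{-1}(y)$.

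The main obstacle is precisely this cross-variation bookkeeping: one must keep careful track of the sign and of the index contractions in the It\^o--Wentzell correction (equivalently, of the two $\tfrac12$-corrections in the Stratonovich argument) to confirm that their sum reproduces the single term $\sum_{i,j}\partial_i\sigma^{\cdot j}\,\sigma^{ij}$ and not half of it. Everything else, namely existence and uniqueness for \eqref{inverse:flow} and the regularity needed to apply the generalized It\^o formula, is routine given Assumption \ref{assdiff} and the estimates of Proposition \ref{estimatesde}.
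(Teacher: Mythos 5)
Your proposal is correct, but there is nothing in the paper to compare it against step by step: the paper does not prove Proposition \ref{flow} at all, it simply states the result and points to Kunita \cite{K84} (see also \cite{K90}, \cite{b}). What you have written is in effect a faithful reconstruction of the standard argument from that literature, and the crux --- the sign bookkeeping --- is right. In the forward direction the It\^o-to-Stratonovich correction to the drift is $-\tfrac12\sum_{i,j}\partial_i\sigma^{\cdot j}\,\sigma^{ij}$; the inverse flow solves the negated backward Stratonovich equation; and because the backward Stratonovich-to-It\^o conversion carries the sign opposite to the forward one, the two half-corrections accumulate instead of cancelling, yielding exactly the full drift $\widehat b=b-\sum_{i,j}\partial_i\sigma^{\cdot j}\,\sigma^{ij}$ of \eqref{drift:backward}. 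A quick sanity check confirms this: for $d=1$, $b=0$, $\sigma(x)=x$ one has $X_{t,s}(x)=x\exp\big(B_s-B_t-\tfrac12(s-t)\big)$, hence $X_{t,s}^{-1}(y)=y\exp\big(-(B_s-B_t)+\tfrac12(s-t)\big)$, and this process does solve \eqref{inverse:flow} with $\widehat b(x)=-x$, which is what \eqref{drift:backward} predicts. One small inaccuracy in your verification route: under Assumption \ref{assdiff} the coefficient $\widehat b$ does \emph{not} inherit a global Lipschitz bound. Indeed $\partial^2\sigma$ is bounded but $\sigma$ itself only grows linearly, so the derivative of $\partial_i\sigma^{\cdot j}\,\sigma^{ij}$ contains a term $\partial^2\sigma\cdot\sigma$ of linear growth; $\widehat b$ is only locally Lipschitz with linear growth. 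This still gives strong existence and uniqueness without explosion for the time-reversed equation, so your second argument goes through, but the claim as stated should be weakened accordingly.
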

\vspace{0.2cm}
\noindent We denote by $J(X_{t,s}^{-1}(x))$ the determinant of the Jacobian
matrix of $X_{t,s}^{-1}(x)$, which is positive and
$J(X_{t,t}^{-1}(x))=1$. For $\varphi\in C_c^{\infty}(\R^d)$, we define
a process $\varphi_t:\,\Omega\times [t,T]\times \R^d\rightarrow \R$ by
\begin{equation}
\label{random:testfunction conv}
\varphi_t(s,x):=\varphi(X_{t,s}^{-1}(x))J(X_{t,s}^{-1}(x)).
\end{equation}

By a change of variable formula, we have for  all $v\in \mathbf{L}^2(\R^d)$,   $$(v\circ X_{t,s} (\cdot),
\varphi)=\Int_{\R^d}v(X_{t,s}
(x))\varphi(x)dx=\Int_{\R^d}v(y)\varphi(X_{t,s}^{-1}(y))J(X_{t,s}^{-1}(y))dy
=(v,\varphi_t(s,\cdot)), \; \, \P-a.s.$$ 
Since  $ (\varphi_t(s,x))_{ t\leq s}$ is a
process,  we may not use it directly as a test function because 
$\Int_t^T(u(s,\cdot),\partial_s\varphi_t(s,\cdot)){\color{blue}{ds}}$ has no sense. However
$\varphi_t(s,x)$ is a semimartingale and we have the following
decomposition of $\varphi_t(s,x)$ where the proof can be found  in \cite{BM01} (proof of Lemma 2.1. p.135), see also Kunita \cite{K94a}, \cite{K94b} for the use of such random test functions.
\begin{Lemma}\label{decomposition conv}
For every function $\varphi\in C_c^{\infty}(\R^d),$
\begin{equation}\label{decomp conv}\begin{array}{ll}
\varphi_t(s,x)&=\varphi(x)+\displaystyle\Int_t^s{\mathcal
L}^\ast\varphi_t(r,x)dr-\sum_{j=1}^{d}\Int_t^s\left(\sum_{i=1}^{d}\frac{\partial}{\partial
x_i}(\sigma^{ij}(x)\varphi_t(r,x))\right)dB_r^j,
 \end{array} \end{equation}
where ${\mathcal L}^\ast$ is the adjoint operator of ${\mathcal L}$.
\end{Lemma}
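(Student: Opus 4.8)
The plan is to avoid differentiating the composition $\varphi(X_{t,s}^{-1}(x))\,J(X_{t,s}^{-1}(x))$ directly. Instead I would exploit the duality identity $(v\circ X_{t,s},\varphi)=(v,\varphi_t(s,\cdot))$ recalled just above the statement, and transport the (easy) It\^o dynamics of the \emph{forward} flow onto the test function by integration by parts. In effect I regard $\varphi_t(s,\cdot)$ as the push-forward of $\varphi\,dx$ under $X_{t,s}$ and show that it solves the dual (Fokker--Planck type) equation whose generator is $\mathcal{L}^\ast$.

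First I would apply the classical It\^o formula to $v(X_{t,s}(x))$ for a fixed $v\in C_c^\infty(\R^d)$, using the forward SDE $(\ref{sde})$, namely $dX_{t,s}(x)=b(X_{t,s}(x))\,ds+\sigma(X_{t,s}(x))\,dB_s$; this yields
\begin{equation*}
v(X_{t,s}(x))=v(x)+\int_t^s \mathcal{L}v(X_{t,r}(x))\,dr+\sum_{i,j}\int_t^s \partial_i v(X_{t,r}(x))\,\sigma^{ij}(X_{t,r}(x))\,dB_r^j .
\end{equation*}
Multiplying by $\varphi(x)$, integrating in $x$ over $\R^d$, and then interchanging the spatial integral with the time and stochastic integrals, I would obtain
\begin{equation*}
(v\circ X_{t,s},\varphi)=(v,\varphi)+\int_t^s (\mathcal{L}v\circ X_{t,r},\varphi)\,dr+\sum_{j}\int_t^s \Big(\textstyle\sum_i \partial_i v\,\sigma^{ij}\circ X_{t,r},\varphi\Big)\,dB_r^j .
\end{equation*}

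Next I would rewrite each inner product of the form $(w\circ X_{t,r},\varphi)$ as $(w,\varphi_t(r,\cdot))$ through the change-of-variables identity, and then integrate by parts in the space variable to move the derivatives off $v$ and onto $\varphi_t(r,\cdot)$. Because $\varphi$ has compact support and $X_{t,r}$ is a $C^2$-diffeomorphism (Proposition \ref{flow}), the function $\varphi_t(r,\cdot)$ again has compact support, so no boundary terms arise. This turns $(\mathcal{L}v,\varphi_t(r,\cdot))$ into $(v,\mathcal{L}^\ast\varphi_t(r,\cdot))$ and each first-order term $(\sum_i\partial_i v\,\sigma^{ij},\varphi_t(r,\cdot))$ into $-(v,\sum_i \partial_i(\sigma^{ij}\varphi_t(r,\cdot)))$, which is exactly the divergence expression in $(\ref{decomp conv})$. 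Since the resulting identity holds for every $v$ in the dense class $C_c^\infty(\R^d)$, I would conclude that $(\ref{decomp conv})$ holds in $\mathbf{L}^2$, i.e. for a.e.\ $x$.

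The step that requires genuine care is the stochastic Fubini theorem used to exchange $\int_{\R^d}(\cdot)\varphi(x)\,dx$ with $\int_t^s(\cdot)\,dB_r^j$, together with the adaptedness needed for the forward It\^o integrals: the integrand $(\sum_i \partial_i v\,\sigma^{ij}\circ X_{t,r},\varphi)$ is $\mathcal{F}_{t,r}^B$-measurable, so the duality indeed produces a bona fide forward stochastic integral against $dB_r^j$. To justify the interchange and the integrability of every term I would invoke the moment and spatial-regularity estimates for the flow in Proposition \ref{estimatesde}, the boundedness of $\sigma$ and its derivatives under Assumption \ref{assdiff}, and the compact support of $\varphi$; these give the $\mathbf{L}^2(\Omega\times[t,T]\times\R^d)$ bounds needed to apply the stochastic Fubini theorem and to pass the integration by parts through the stochastic integral. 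This is the main technical obstacle; once it is cleared, the algebraic identification of the drift as $\mathcal{L}^\ast\varphi_t$ and of the martingale part as the stated divergence term is routine.
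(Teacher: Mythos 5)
Your proof is correct, and in fact the paper itself offers no proof of this lemma at all: it is quoted as a known result from the stochastic-flow literature (Kunita \cite{K94a}, \cite{K94b}, Bally--Matoussi \cite{BM01}), so there is nothing in the text to compare against line by line. Your duality argument --- push the forward It\^o formula for $v(X_{t,s}(x))$ through the identity $(v\circ X_{t,s},\varphi)=(v,\varphi_t(s,\cdot))$, integrate by parts to move $\mathcal{L}$ onto $\varphi_t$ as $\mathcal{L}^\ast$ and the first-order terms into the divergence form, then use density of $v\in C_c^\infty(\R^d)$ --- is one of the two standard routes, and the signs and the structure of the martingale term come out exactly as in \eqref{decomp conv}. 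The alternative route, which is what Kunita's theory of flows suggests and what the paper's Proposition \ref{flow} is set up for, is a direct computation: apply the (backward) It\^o--Kunita formula to $\varphi(X_{t,s}^{-1}(x))$ using the backward SDE \eqref{inverse:flow} for the inverse flow, derive the corresponding equation for the Jacobian determinant $J(X_{t,s}^{-1}(x))$, and combine the two by the product rule. The direct route produces the semimartingale decomposition pointwise in $x$ but requires differentiating the inverse flow and its Jacobian; your route avoids that entirely at the cost of two applications of the stochastic Fubini theorem and yields the identity in the a.e./distributional sense in $x$ --- which is precisely the form in which the lemma is used later (in Proposition \ref{weak:Itoformula1 conv} it is only ever integrated against $u$). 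Two small points you should make explicit if you write this up: (i) the moment bounds you invoke from Proposition \ref{estimatesde} concern the forward flow, whereas the integrands after the change of variables involve $X_{t,r}^{-1}$ and $J(X_{t,r}^{-1})$, so you need the analogous Kunita estimates for the inverse flow (available since it solves the backward SDE \eqref{inverse:flow} with coefficients of the same regularity); (ii) under Assumption \ref{assdiff} the flow is only a $C^2$-diffeomorphism, so $\varphi_t(r,\cdot)$ is $C^1$ and $\mathcal{L}^\ast\varphi_t$ must be read weakly --- your formulation tested against $v$ is exactly the right way to interpret the statement, but that interpretive remark deserves a sentence.
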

\noindent Thanks to the above lemma, we can replace $\partial_s \varphi\, ds$ by the It\^o stochastic integral with respect to $d\varphi_t(s,x)$. This allows us to give the following
\begin{Definition}
For every $s\in[t,T]$, $u\in\Hc_T$ and $ \varphi\in C_c^{\infty}(\R^d),$ we define 
\begin{equation}\label{decomp conv}\begin{array}{ll}
\Int_s^T \left(u_r,d\varphi_t(r,.)\right)&=\displaystyle\Int_s^T \left(u_r,{\mathcal
L}^\ast\varphi_t(r,.)\right)dr-\sum_{j=1}^{d}\Int_s^T\left(\sum_{i=1}^{d}\left(u_r,\frac{\partial}{\partial x_i}(\sigma^{ij}(x)\varphi_t(r,.))\right)\right)dB_r^j,
 \end{array} \end{equation}
\end{Definition} 
\subsection{Existence and uniqueness of solutions for the reflected SPDE}
\label{subsection:SPDE}
In order  to provide a probabilistic representation to the solution of the RSPDEs \eqref{OSPDE1}, we introduce the following  Markovian RBDSDE:
\begin{equation}
\label{rbsde1}
 \left\lbrace
\begin{aligned}
&(i)~ Y_{s}^{t,x}
 =\Phi(X_{t,T}(x))+
\Int_{s}^{T}f_r(X_{t,r}(x),Y_{r}^{t,x},Z_{r}^{t,x})dr+\Int_{s}^{T}h_r(X_{t,r}(x),Y_{r}^{t,x},Z_{r}^{t,x})d\W_r+K_{T}^{t,x}-K_{s}^{t,x}\\
&\hspace{2.5cm}
 -\Int_{s}^{T}Z_{r}^{t,x}dB_{r},\;
\P\text{-}a.s. , \; \forall \,  s \in [t,T]  \\
& (ii)~ Y_{s}^{t,x} \in \bar{D} \, \, \quad \P\text{-}a.s.\\
& (iii) \Int_0^T (Y_{s}^{t,x}-v_s(X_{t,s}(x)))^* dK_{s}^{t,x}\leq 0., \, \, \P\text{-}a.s., \\
&~ \text{for any continuous }\, \Fc_t -\text{random function}\, v \, : \,[0,T] \times \Omega \times \mathbb R^d \longrightarrow \,  \bar{D}.
\end{aligned}
\right.
\end{equation}

\noindent Moreover, using Assumptions \ref{assxi} and \ref{assgener} and  the equivalence of norm results  (\ref{equi1 conv}) and (\ref{equi2 conv}), we get
\b*
\begin{split}
&\Phi(X_{t,T}(x)) \in \mathbf{L}^2({\cal F}_T) \, \,  \mbox{and} \, \,    \Phi(X_{t,T}(x)) \in \bar{D},\\
 &f_s^0(X_{t,s}(x)) \in \mathcal {H}_{k}^2(t,T)\,\mbox{ and }\,h_s^0(X_{t,s}(x)) \in \mathcal {H}_{k\times d}^2(t,T).\\
\end{split}
\e*
Therefore  under Assumption \ref{assxi}-\ref{assint} and according to Theorem \ref{existence:RBDSDE},  there exists a unique triplet 
$ (Y^{t,x},Z^{t,x},K^{t,x}) $ solution of the RBDSDE \eqref{rbsde1} associated to $ (\Phi, f, h)$.\\

\noindent We  now consider  the  following definition  of weak solutions for the reflected SPDE (\ref{OSPDE1}):
\begin{Definition}[{\textbf{Weak solution of RSPDE}}]
\label{o-pde}  We say that $(u,\nu ):= (u^i,\nu^i )_{1\leq i\leq k}$ is the weak solution of the reflected SPDE (\ref{OSPDE1}) associated to $(\Phi,f,h)$, if for each $1\leq i\leq k$ 
\begin{itemize}
\item[(i)] $ u\in\mathcal H_T$, $u_t(x)\in \bar{D}, dx\otimes dt\otimes d\P~a.e.$, and  $u(T,x)=\Phi(x)$.
\item[(ii)] $\nu^i $ is a signed \textit{Random measure} on $[0,T]\times\R^d$  concentrated  on $ \{u\in \partial D\} $ such that:
\begin{itemize}
\item[a)] $\nu^i $ is adapted in the sense that for any measurable and $|\nu^i|$-integrable function $\psi:[0,T]\times \R^d\longrightarrow\R^d$ and for each $s\in[t,T]$,$\Int_{s}^{T}\!\Int_{\mathbb{R}^{d}}\!\psi (r,x)\nu^i(dr,dx)$ is $\Fc_{s,T}^W$-measurable.
\item[b)] $\E\big[\Int_{0}^{T}\Int_{\mathbb{R}^{d}} w (x)|\nu^i| (dt,dx)\big]<\infty.$

\end{itemize} 
\item[(iii)] For every $\varphi \in \mathcal D_T$, 
\begin{align}\label{OPDE}
\nonumber &\Int_{t}^{T}\!\!\Int_{\mathbb{R}^{d}}\!u^i(s,x)\partial _{s}\varphi(s,x)dxds+\Int_{\mathbb{R}^{d}}\!\!(u^i(t,x )\varphi (t,x
)-\Phi^i(x )\varphi (T,x))dx-\Int_{t}^{T}\!\!\Int_{\mathbb{R}^{d}}\!u^i(s,x)\mathcal{L}^*\varphi(s,x)dxds\\
\nonumber &=\Int_{t}^{T}\!\!\Int_{\mathbb{R}^{d}}\!\!f_s(x ,u(s,x),\nabla u(s,x)\sigma(x))\varphi(s,x)dxds+\Int_{t}^{T}\!\!\Int_{\mathbb{R}^{d}}\!\!h_s(x ,u(s,x),\nabla u(s,x)\sigma(x))\varphi(s,x)dxd\W_s\\
& +\Int_{t}^{T}\!\!\Int_{\mathbb{R}^{d}}\!\!\varphi (s,x)1_{\{u\in \partial D\}}(s,x)\nu^i(ds,dx), \; \, \P-a.s. 
\end{align}
\end{itemize}
For the sake of simplicity we will omit in the sequel the subscript $i$.
\end{Definition}
\vspace{0.2cm}
\noindent The main result of this section is the following:
\begin{Theorem}
\label{existence:RSPDE}
Let  Assumptions \ref{assumptiondomain} and \ref{assxi}-\ref{assint}   hold and  $w (x)=(1+\left| x\right|)^{-p}$ with $p >  d+1 $. Then
there exists a weak solution $(u,\nu
)$ of  the reflected SPDE (\ref{OSPDE1}) associated to $(\Phi,f,h)$ such that, $ u (t, x) := Y_t^{t,x}$,   $dt\otimes
d\P\otimes w(x)dx-a.e.$,  and  \be\label{con-pre}
 Y_{s}^{t,x}=u(s,X_{t,s}(x)), \quad \quad Z_{s}^{t,x}=(\nabla
u\sigma)(s,X_{t,s}(x)), \quad ds\otimes
d\P\otimes w(x)dx-a.e.. 
\ee
Moreover, $\nu^i $ is a \textit{regular measure} in the following sense: for every measurable bounded and positive functions $\varphi $ and $\psi $,
\begin{align}
\nonumber &\Int_{\mathbb{R}^{d}}\Int_{t}^{T}\varphi (s,X^{-1}_{t,s}(x))J(X^{-1}
_{t,s}(x))\psi (s,x)1_{\{u\in \partial D\}}(s,x)\nu^i (ds,dx)\\
&=\Int_{\mathbb{R}^{d}}\Int_{t}^{T}\varphi (s,x)\psi (s,X_{t,s}(x))dK_{s}^{t,x,i}dx\text{, a.s.}
\label{con-k}
\end{align}
where $(Y_{s}^{t,x},Z_{s}^{t,x},K_{s}^{t,x})_{t\leq s\leq T}$ is the
solution of RBDSDE (\ref{rbsde1}) and satisfying the probabilistic interpretation \eqref{con-k}.\\
If $(\overline{u},\overline{\nu })$ is another solution of the reflected SPDE (\ref{OSPDE1}) such that $\overline{\nu }$ satisfies (\ref{con-k}) with some $%
\overline{K}$ instead of $K$, where $(\overline{K}_s^{t,x})_{t\leq s\leq T}$ is a continuous finite variation process for all $(t,x)$, then $\overline{u}=u$ and $\overline{\nu }=\nu $.\\
In other words, there is a unique Randon regular measure with support $\{u\in\partial D\}$ which satisfies (\ref{con-k}).
\end{Theorem}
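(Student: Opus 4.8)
The plan is to realize $u$ through the stochastic flow approach of Bally and Matoussi \cite{BM01}, coupling it with the penalization scheme and the a priori estimates already proved in Section \ref{existenceBDSDE:section}. First I would penalize at the SPDE level: for each $n\in\N$ consider the non-reflected semilinear SPDE obtained by adding the drift term $-n\big(u^n_t(x)-\pi(u^n_t(x))\big)$ to \eqref{SPDE1}. By the classical theory (Theorem 3.1 in \cite{BM01}, or \cite{DS04}) this penalized equation has a unique weak solution $u^n\in\Hc_T$, and its Feynman-Kac representation yields $Y^{n,t,x}_s=u^n(s,X_{t,s}(x))$ and $Z^{n,t,x}_s=(\nabla u^n\sigma)(s,X_{t,s}(x))$, where $(Y^{n,t,x},Z^{n,t,x})$ solves the penalized BDSDE \eqref{BDSDEpen} driven by the diffusion $X_{t,\cdot}(x)$. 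The equivalence of norms (Proposition \ref{equivalence:normes conv}) is the decisive bridge here: it converts the Cauchy property of $(Y^n,Z^n)$ in $\Sc^2_k\times\Hc^2_{k\times d}$ obtained in Lemma \ref{conv} into the Cauchy property of $(u^n,\nabla u^n\sigma)$ in $\Hc_T$. Passing to the limit produces $u\in\Hc_T$, and the identifications \eqref{con-pre} follow by taking limits in the Feynman-Kac formulas.

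Next I would build the random measure $\nu$ from the bounded variation process $K^{t,x}$ by inverting the flow. On the SPDE side the penalization drift defines $\nu^n(ds,dx)=-n\big(u^n_s(x)-\pi(u^n_s(x))\big)\,ds\,dx$, while $dK^{n,t,x}_s=-n\big(Y^{n,t,x}_s-\pi(Y^{n,t,x}_s)\big)\,ds$. Using $Y^{n,t,x}_s=u^n(s,X_{t,s}(x))$ and the change of variables $y=X_{t,s}(x)$ (whose Jacobian is $J(X^{-1}_{t,s}(y))$ by Proposition \ref{flow}), one checks that the pair $(\nu^n,K^{n,t,x})$ satisfies relation \eqref{con-k} exactly. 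Then \eqref{con-k} is read as the \emph{defining} relation for $\nu$: its validity for all bounded positive $\varphi,\psi$ specifies a Radon measure. The uniform total-variation bound of Lemma \ref{extraestimate} together with the convergence $K^n\to K$ established at the end of Theorem \ref{existence:RBDSDE} guarantees that $\nu$ is well defined, adapted, and satisfies $\E\big[\Int_0^T\Int_{\R^d}\rho(x)|\nu|(dt,dx)\big]<\infty$, using $\rho(x)=(1+|x|)^{-p}$ with $p>d+1$ and the equivalence of norms. The Skorohod condition \eqref{skorohod2} for $K$ forces $\nu$ to be supported on $\{u\in\partial D\}$, which is exactly condition (iii) in Definition \ref{o-pde}.

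For uniqueness I would run the flow argument backwards. Given any solution $(\overline u,\overline\nu)$ satisfying \eqref{con-k} with a continuous process $\overline K$, I would test the weak formulation \eqref{OPDE} against the random test function $\varphi_t(s,x)=\varphi(X^{-1}_{t,s}(x))J(X^{-1}_{t,s}(x))$. Applying the semimartingale decomposition of Lemma \ref{decomposition conv}, the deterministic and martingale terms in \eqref{OPDE} recombine to reproduce the BDSDE dynamics, while the measure term $\Int\Int\varphi\,1_{\{u\in\partial D\}}\,\overline\nu$ is transformed by \eqref{con-k} into the increment $d\overline K^{t,x}_s$. This shows that $\big(\overline u(s,X_{t,s}(x)),(\nabla\overline u\sigma)(s,X_{t,s}(x)),\overline K^{t,x}_s\big)$ solves the RBDSDE \eqref{rbsde1}; the uniqueness part of Theorem \ref{existence:RBDSDE} then gives $\overline u=u$, and \eqref{con-k} applied once more yields $\overline\nu=\nu$.

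The hard part will be the rigorous construction of $\nu$ and the verification of \eqref{con-k}. In contrast with the one-dimensional obstacle case, where the reflecting force is a scalar increasing process, here $K$ is a vector-valued bounded variation process whose direction lies in the normal cone $n(Y_s)$, so transporting it through the flow inversion into a genuine signed regular measure supported on $\{u\in\partial D\}$ demands careful control of the Jacobian $J(X^{-1}_{t,s})$ and a consistency check that the right-hand side of \eqref{con-k} is independent of the chosen test functions $\varphi,\psi$. The $\rho$-integrability of $\nu$ is exactly where the weight $\rho(x)=(1+|x|)^{-p}$ with $p>d+1$ and the two-sided bound of Proposition \ref{equivalence:normes conv} enter essentially.
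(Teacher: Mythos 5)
Your skeleton is the paper's own: penalize, represent the penalized solutions via the Feynman--Kac formula of Bally--Matoussi, use the equivalence of norms (Proposition \ref{equivalence:normes conv}) to convert the Cauchy property of $(Y^n,Z^n)$ into the Cauchy property of $(u^n,\nabla u^n\sigma)$ in $\Hc_T$, relate $K$ and $\nu$ by inverting the flow, and prove uniqueness by testing \eqref{OPDE} against the random test functions $\varphi_t(s,x)=\varphi(X^{-1}_{t,s}(x))J(X^{-1}_{t,s}(x))$ through Proposition \ref{weak:Itoformula1 conv} and Lemma \ref{decomposition conv}, then invoking uniqueness of the RBDSDE \eqref{rbsde1}. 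Your uniqueness paragraph matches the paper's part b) essentially verbatim and is fine.

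The genuine gap is in the existence part, in how $\nu$ is produced. You take \eqref{con-k} as the \emph{definition} of $\nu$ (the Jacobian-corrected image of $dK^{t,x}_s\otimes dx$ under the flow). That makes \eqref{con-k} tautological, but then nothing in your plan shows that this pair $(u,\nu)$ satisfies the weak formulation \eqref{OPDE} --- and that is what makes $(u,\nu)$ a solution at all. The paper's logic runs in the opposite direction: the penalization measures $\nu_n(dt,dx)=-n(u_n-\pi(u_n))(t,x)dtdx$ and $\pi_n=\rho\,\nu_n$ are shown to have uniformly bounded total mass \eqref{est-measure} and to be tight (Lemma \ref{tight}, which exploits the independence of $W$ and $B$ and exponential estimates on the flow); a weak limit $\pi$ is extracted, $\nu:=\rho^{-1}\pi$ is defined, and \eqref{OPDE} follows by passing to the limit in the penalized formulation \eqref{o-equa1}; only \emph{afterwards} is \eqref{con-k} proved, as a limit identity, from the weak convergence $dK^{n,t,x}\to dK^{t,x}$, the identity $dK^{t,x}_s=1_{\{u\in\partial D\}}(s,X_{t,s}(x))dK^{t,x}_s$, and the algebra-separation argument together with $J(X^{-1}_{t,s}(x))>0$ (this is also where the support property of $\nu$ comes from, not from Definition \ref{o-pde}(iii), which is the weak formulation itself). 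If you insist on defining $\nu$ through $K$, you must still prove that $\nu_n\to\nu$ weakly in order to pass to the limit in \eqref{o-equa1}; that requires the uniform bound of Lemma \ref{extraestimate} integrated in $x$ against $\rho$, an exchange of limit and $dx$-integration, and a density argument --- i.e., exactly the content of the paper's tightness step, which in your proposal is invoked only to assert that $\nu$ is ``well defined.'' A further point you leave open: your definition fixes one initial time of the flow, while \eqref{con-k} must hold for \emph{every} $t$; checking this consistency needs the flow property and the uniqueness of the Markovian RBDSDE across initial times. Finally, note that you run the penalization in one step with the full $h(x,y,z)$, whereas the paper deliberately proceeds in two steps (Step 1 with $h$ free of $(y,z)$; Step 2 freezing $H_s(x):=h(x,Y^{s,x}_s,Z^{s,x}_s)$ and identifying the solution via uniqueness of the RBDSDE). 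Your one-step route is plausible, since strong convergence in $\Hc_T$ plus the Lipschitz property of $h$ allows passage to the limit in the backward It\^o integral, but this step must be argued rather than assumed.
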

\begin{Remark}
The expression (\ref{con-k}) gives us the probabilistic
interpretation (Feymamn-Kac's formula) for the measure $\nu $ via
the nondecreasing process $K^{t,x}$ of the RBDSDE. This formula was
first introduced in Bally et al. \cite{BCEF} (see also \cite{MX08}) in the context of obstacle problem  for PDEs. Here we adapt this notion to the case of SPDEs in a convex domain.
\end{Remark}
\vspace{0.2cm}
\noindent We give now the following result which allows us to link in a natural way the solution of RSPDE with the associated RBDSDE.
 Roughly speaking,  for each  test function  $ \varphi \in C_c^{\infty}(\R^d) $,  the variational formulation \eqref{OPDE}  written with  the random test functions $\varphi_t(\cdot,\cdot)$  gives the RBDSDE \label{rbsde1}  integrated against the test function $ \varphi $ , this dictionary can  be understood as the dual formulation of the variational equation for SPDE.  Pardoux and Peng  \cite{pp1994} have proved the probabilistic representation of classical solution $u$  for semilinear SPDE's  via BDSDEs by using the classical Itô's formula  for  $u (s, X_{t,s} (x))$.  However, since we consider Sobolev solutions for RSPDEs, the following proposition plays the rule of Itô's formula applied to the random test function $\varphi_t(s,x)$.  
    \begin{Proposition}
\label{weak:Itoformula1 conv} Let  Assumptions \ref{assxi}-\ref{assint} hold and $(u,\nu)$ be a weak solution of the reflected 
SPDE (\ref{OSPDE1}) associated to $(\Phi,f,h)$, 
then for $s\in[t,T]$ and $\varphi\in
C_c^{\infty}(\R^d)$, 
\begin{equation}\label{wspde2}
\begin{array}{ll}
\displaystyle\int_{\R^d}\int_s^Tu(r,x)d\varphi_t(r,x)dx+(u(s,\cdot),\varphi_t(s,\cdot))-(\Phi(\cdot),\varphi_t(T,\cdot))
-\Int_{\R^d}\int_s^T u(r,x)\mathcal L^\ast \varphi_t(r,x))drdx\\
=\displaystyle\int_{\R^d}\int_s^Tf_r(x,u(r,x),\nabla
u(r,x)\sigma(x))\varphi_t(r,x)drdx+\displaystyle\int_{\R^d}\int_s^T h_r(x,u(r,x),\nabla
u(r,x)\sigma(x))\varphi_t(r,x)d\W_rdx\\
+\displaystyle\int_{\mathbb{R}^{d}}\int_{s}^{T}\varphi _{t}(r,x)1_{\{u\in\partial D\}}(r,x)\nu (dr,dx) \quad a.s.
\end{array}
\end{equation}
where  $\Int_{\R^d}\Int_s^Tu(r,x)d\varphi_t(r,x)dx $ is well defined in the semimartingale decomposition result (Lemma \ref{decomposition conv}).
\end{Proposition}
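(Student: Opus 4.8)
The statement to prove is Proposition~\ref{weak:Itoformula1 conv}: if $u$ is a weak solution of the reflected SPDE~\eqref{OSPDE1} in the sense of Definition~\ref{o-pde}, then plugging in the random test function $\varphi_t(s,x)=\varphi(X_{t,s}^{-1}(x))J(X_{t,s}^{-1}(x))$ in place of a deterministic $\varphi$ yields the identity~\eqref{wspde2}, which is the spatially-integrated, flow-transported version of~\eqref{OPDE}. Essentially I need to justify that the deterministic variational formulation~\eqref{OPDE}, valid for every fixed $\varphi\in\mathcal D_T$, continues to hold when $\varphi$ is replaced by the semimartingale-valued process $\varphi_t(\cdot,\cdot)$, with the extra Itô-type cross term being absorbed into the $d\varphi_t$ term via the decomposition of Lemma~\ref{decomposition conv}.

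Let me sketch the approach. The plan is to proceed by approximation in time. First I would fix $\varphi\in C_c^\infty(\R^d)$ and, for a partition $t=s_0<s_1<\cdots<s_N=T$ of $[s,T]$, freeze the process $\varphi_t(r,\cdot)$ at the left endpoints, i.e. write $\varphi_t(r,x)\approx\sum_i \varphi_t(s_i,x)\1_{[s_i,s_{i+1})}(r)$. On each subinterval $\varphi_t(s_i,\cdot)$ is a fixed (random but $\Fc_{s_i}$-measurable) element of a suitable space, so I can apply the deterministic weak formulation~\eqref{OPDE} with this frozen test function on $[s_i,s_{i+1}]$ and sum. The key technical point is that~\eqref{OPDE} holds for deterministic $C_c^\infty$ test functions, and $\varphi_t(s_i,\cdot)$ has the required regularity for each fixed $i$ (it is $C^1_c$ by the diffeomorphism property, Proposition~\ref{flow}), so by a density/linearity argument the relation extends to such frozen functions; the independence structure of $\Fc_t=\Fc_t^B\vee\Fc_{t,T}^W\vee\Nc$ and the $\Fc_{s_i}$-measurability of the coefficients on $[s_i,s_{i+1}]$ make the backward stochastic integrals well-defined on each piece.

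Next I would pass to the limit as the mesh of the partition tends to zero. Here the decomposition of $\varphi_t(r,x)$ from Lemma~\ref{decomposition conv} is essential: it shows $\varphi_t(r,\cdot)$ is a continuous semimartingale with drift $\mathcal L^\ast\varphi_t$ and a $dB_r$ martingale part. The sum over subintervals of the terms $(u(s_i,\cdot),\varphi_t(s_{i+1},\cdot)-\varphi_t(s_i,\cdot))$ converges, as the mesh vanishes, to the stochastic-integral term $\int_{\R^d}\int_s^T u(r,x)\,d\varphi_t(r,x)\,dx$ appearing on the left of~\eqref{wspde2}, which is exactly what the statement means by ``$\int_{\R^d}\int_s^T u\,d\varphi_t\,dx$ is well defined in the semimartingale decomposition result.'' The drift part of $d\varphi_t$ produces the $\mathcal L^\ast\varphi_t$ term, while the forcing, noise, and reflection-measure terms converge to their continuous counterparts on the right of~\eqref{wspde2} by the integrability bounds $u\in\mathcal H_T$, the Lipschitz/growth Assumptions~\ref{assgener}, and the regularity~(b) of $\nu$ in Definition~\ref{o-pde}. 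I would use the moment estimates of Proposition~\ref{estimatesde} for the flow and its derivatives to control $\varphi_t(r,\cdot)$ and its gradient uniformly in $r$, and the equivalence-of-norms Proposition~\ref{equivalence:normes conv} to transfer spatial $\mathbf L^2_\rho$ bounds through the flow.

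The main obstacle, as I see it, is the rigorous convergence of the Riemann–Stieltjes-type sum $\sum_i (u(s_i,\cdot),\varphi_t(s_{i+1},\cdot)-\varphi_t(s_i,\cdot))$ to $\int_{\R^d}\int_s^T u\,d\varphi_t\,dx$ together with the handling of the backward Itô correction. Because $\varphi_t$ carries a genuine $dB$-martingale component and $u$ itself is only a process in $\mathcal H_T$ (not a priori a semimartingale in time in a pointwise sense), one cannot simply invoke Itô's formula; instead the integral against $d\varphi_t$ must be interpreted through the semimartingale decomposition of Lemma~\ref{decomposition conv}, and one must check that the quadratic-covariation contribution between $u$ and the martingale part of $\varphi_t$ is accounted for correctly. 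I would control this by combining the Burkholder–Davis–Gundy inequality on the martingale parts with the $\mathbf L^2_\rho$-integrability of $u$ and $\nabla u\sigma$, using the representation~\eqref{con-k} of $\nu$ through $K^{t,x}$ to identify the reflection term after transport by the flow. The remaining steps — verifying that each frozen-in-time application of~\eqref{OPDE} is legitimate and that all error terms from freezing vanish — are routine given Assumptions~\ref{assxi}--\ref{assint} and the flow estimates, so I would relegate those to standard estimates.
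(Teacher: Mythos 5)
The paper gives no proof of Proposition \ref{weak:Itoformula1 conv}: it is stated and then used directly in the uniqueness part of Theorem \ref{existence:RSPDE}, the argument being understood as the random-test-function lemma of Bally and Matoussi \cite{BM01} adapted to include the reflection measure. Your proposal must therefore stand on its own. It does follow the expected route (freeze $\varphi_t(\cdot,\cdot)$ on a partition of $[s,T]$, apply \eqref{OPDE} piecewise with the frozen random test functions, pass to the limit via the semimartingale decomposition of Lemma \ref{decomposition conv}), but it has a genuine gap at exactly the point you yourself flag as the main obstacle.

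The gap is your treatment of the cross-variation. Summing the frozen formulations by parts leaves the discrete covariation $\sum_i \int_{\R^d}\big(u(s_{i+1},x)-u(s_i,x)\big)\big(\varphi_t(s_{i+1},x)-\varphi_t(s_i,x)\big)\,dx$, and the tools you propose --- Burkholder--Davis--Gundy plus the $\mathbf{L}^2_\rho$ bounds on $u$ and $\nabla u\sigma$ --- cannot make it vanish: each factor is of order $(\Delta s_i)^{1/2}$ in $\mathbf{L}^2$, so Cauchy--Schwarz only bounds the sum by a constant of order $\sum_i \Delta s_i = T-s$, which does not go to zero. The reason this term actually vanishes (equivalently, the reason \eqref{wspde2} carries no It\^o correction at all) is structural and is never invoked in your sketch: $u\in\mathcal{H}_T$ means $u(r,\cdot)$ is $\Fc_{r,T}^W$-measurable, with martingale part driven by the backward noise $\W$ (plus the finite-variation contributions of the drift and of $\nu$), while $\varphi_t(r,\cdot)$ is measurable with respect to $\sigma\{B_v-B_t,\, t\le v\le r\}$, with martingale part the forward $dB$-integral of Lemma \ref{decomposition conv}; since $W$ and $B$ are independent, the two martingale parts are orthogonal and the discrete covariation tends to $0$ in $\mathbf{L}^1$. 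This same independence is also what makes $\int_{\R^d}\int_s^T u(r,x)\,d\varphi_t(r,x)\,dx$ well defined in the first place, since $u$ is not $\Fc^B$-adapted: one must enlarge the filtration of $B$ by the independent $\sigma$-field generated by $W$, under which $B$ remains a Brownian motion. Without this orthogonality argument your limit does not close. A secondary flaw: you propose to use the representation \eqref{con-k} of $\nu$ through $K^{t,x}$ inside the proof. The proposition is stated for an arbitrary weak solution $(u,\nu)$ in the sense of Definition \ref{o-pde} and makes no reference to \eqref{con-k}; in the uniqueness argument it is applied first, and \eqref{con-k} is brought in only afterwards, through the change of variables. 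The reflection term of \eqref{OPDE} passes to \eqref{wspde2} verbatim, using only the integrability condition (ii) b) of Definition \ref{o-pde} to control it along the partition; invoking \eqref{con-k} there both restricts the statement and risks circularity.
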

The proof of the proposition is  the same as the proof of Proposition 2.3, p. 137 in Bally and Matoussi  \cite{BM01}.  This latter is based on Lemma 4.1 p.147 and Lemma 4.2. p.148  which involve the Wong-Zakai approximation of the Itô stochastic integral appearing in the semimartingale decomposition of the random test functions  given by \eqref{random:testfunction conv}  (see \cite{iw}, chap. 6, section 7 , p.480-517). The main idea is to use  $(\varphi_t(s,x)) $ as a test function in the  \eqref{OPDE}. The problem is that $(\varphi_t(s,x)) $  is not differentiable in the time variable $s$,  so that   $ \int \int_t^T u_s \partial_s \varphi_t  (s,x) ds dx $ has no sense. However     $(\varphi_t(s,x)) $ a semimartingale and one can use  Wong-Zakai approximation (see \cite{BM01},  Lemma 4.2 p.148) to handel with this point . Of course, the WZ approximation converges to Stratonovitch stochastic integral, but thanks to regularity assumption on the diffusion coefficient $ \sigma (x)$, one can write  the result explicitly as Itô's stochastic integral with drift term which disappear with a part of the drift term in the semimartingale decomposition of  $ (X^{-1}_{t,s}(x)) $.  
 We note that  Kunita  (\cite{K94a}, \cite{K94b}) has developed a theory of distribution valued semimartingales to study a class of linear SPDEs using these random test functions.  

\vspace{0.5em}
\noindent\textbf{Proof of Theorem \ref{existence:RSPDE}.}\\
\noindent\textbf{a) Existence}: The existence of a solution
will be proved in two steps. For the first step, we suppose that
$h$ doesn't depend on $y,z$, then we are able to apply the classical penalization method. 
In the second step, we study the case when $h$ depends on $y,z$ with the result obtained in the first step.\\[0.1cm] 

\textit{Step 1} :
We will use the penalization method. For $n\in \mathbb{N}$, we consider for
all $s\in [t,T]$,
\begin{align*}
Y_{s}^{n,t,x}=\Phi(X_{t,T}(x))&+\int_{s}^{T}f_r(X_{t,r}(x),Y_{r}^{n,t,x},Z_{r}^{n,t,x})dr+\int_{s}^{T}h_r(X_{t,r}(x))d\W_r\\
&-n
\int_{s}^{T}(Y_{r}^{n,t,x}-\pi(Y_{r}^{n,t,x}))dr
-\int_{s}^{T}Z_{r}^{n,t,x}dB_{r}.
\end{align*}

\noindent From Theorem 3.1 in Bally and Matoussi \cite{BM01}, we know that $u_{n}(t,x):=Y_{t}^{n,t,x}$, is a solution of the SPDE $(\Phi,f_{n},h)$ (\ref{SPDE1}), with $
f_{n}(t,x,y)=f(t,x,y,z)-n(y-\pi(y))$, i.e. for every $\varphi \in\mathcal{D}_T$
\begin{align}\label{o-equa1}
 \nonumber\Int_{t}^{T}(u^{n}(s,\cdot),\partial
_{s}\varphi(s,\cdot) )ds & +(u^{n}(t,\cdot ),\varphi
(t,\cdot ))-(\Phi(\cdot ),\varphi (T,\cdot))-\Int_{t}^{T}
(u^{n}(s,\cdot),\mathcal{L}^*\varphi(s,\cdot))ds\\
\nonumber&=\Int_{t}^{T}(f_s(\cdot,u^{n}(s,\cdot),\sigma ^{*}\nabla
u_{n}(s,\cdot)),\varphi(s,\cdot))ds+\Int_{t}^{T}(h_s(\cdot),\varphi(s,\cdot))d\W_s\\
&-n\int_{t}^{T}((u^{n}-\pi(u^{n}))(s,\cdot ),\varphi(s,\cdot))ds.
\end{align}
Moreover from Theorem 3.1 in Bally and Matoussi \cite{BM01}, we also have
\begin{align}\label{rep1}
\nonumber &Y_{s}^{n,t,x}=u_{n}(s,X_{t,s}(x))\,\,\,,\,\,\,Z_{s}^{n,t,x}=(\nabla
u_{n}\sigma)(s,X_{t,s}(x)),\,\, ds\otimes
d\P\otimes w(x)\,dx-a.e. \\
\end{align}
Set $K_{s}^{n,t,x}=-n \displaystyle
\Int_{t}^{s}(Y_{r}^{n,t,x}-\pi(Y_{r}^{n,t,x}))dr$. Then by
(\ref{rep1}), we have that 
\be
K_{s}^{n,t,x}=-n \displaystyle
\Int_{t}^{s}(u_{n}-\pi(u_{n}))(r,X_{t,r}(x))dr.\label{defKn}\ee
Following the estimates and convergence results for
$(Y^{n,t,x},Z^{n,t,x},K^{n,t,x})$ in Section 2 and  estimate (\ref{estunif}),  we get :
\begin{eqnarray}
\label{K-estimate}
 &\underset{n}{\Sup}\, \E\left[\underset{t\leq s\leq T}{\Sup}\left| Y_{s}^{n,t,x}\right| ^{2}+\int_{t}^{T}\left\|
Z_{s}^{n,t,x}\right\|^2ds + \|K^{n,t,x}\|_{VT}\right] \leq C\left( T,x\right),
\end{eqnarray} 
where  $$ C (T,x):=  {\color{blue}{C}}\,\E \Big[ \left| \Phi (X_{t,T} (x))\right|^2 + \Int_t^T  \big( \left| f_s^0 (X_{t,s} (x))\right|^2 \,  + \left| h_s^0 (X_{t,s} (x))\right|^2 \, \big)\, ds \Big],$$
and
\begin{align*}
& \E[\underset{t\leq s\leq T}{\Sup}\left| Y_{s}^{n,t,x}-Y_{s}^{m,t,x}\right|
^{2}]+\E[\Int_{t}^{T}\left\| Z_{s}^{n,t,x}-Z_{s}^{m,t,x}\right\|
^{2}ds]\\
&+\E[\underset{t\leq s\leq T}{\Sup}\left|
K_{s}^{n,t,x}-K_{s}^{m,t,x}\right| ^{2}]\quad\longrightarrow 0,~\text{as}~ n,m \longrightarrow +\infty.
\end{align*}
Moreover, the equivalence of norms results (\ref{equi2 conv}) yield:
\begin{eqnarray*}
&&\Int_{\mathbb{R}^{d}}\Int_{t}^{T} w(x)(\left|
u_{n}(s,x)-u_{m}(s,x)\right| ^{2}+\left| (\nabla u_{n}\sigma)(s,x)-(\nabla u_{m}\sigma)(s,x)\right| ^{2})dsdx \\
&\leq &\frac{1}{k_{2}}\Int_{\mathbb{R}^{d}} w(x)\E\Int_{t}^{T}(\left|
Y_{s}^{n,t,x}-Y_{s}^{m,t,x}\right| ^{2}+\left\|
Z_{s}^{n,t,x}-Z_{s}^{m,t,x}\right\| ^{2})dsdx\longrightarrow 0.
\end{eqnarray*}
Thus $(u_{n})_{n\in\mathbb N}$ is a Cauchy sequence in $\mathcal{H}_T$, and the limit $%
u=\underset{n\rightarrow \infty }{\lim}u_{n}$ belongs to  $\mathcal{H}_T$.\\
Moreover, using standard computations from BSDEs and SPDEs technics we can prove that $\E[\underset{s\in[0,T]}{\sup} \|u_s^n-u_s^m\|_2^2]\longrightarrow 0 \,\text{as}\, n,m\rightarrow \infty$.\\
Denote $\nu _{n}(dt,dx)= -n(u_{n}-\pi(u_{n}))(t,x)dtdx$ and $\pi _{n}(dt,dx)=w
(x)\nu _{n}(dt,dx)$, then by the equivalence norm result (\ref{equi2 conv}) we get 
\begin{eqnarray*}
\E\big[|\pi _{n}|([t,T]\times \mathbb{R}^{d})\big] &=&\Int_{\mathbb{R}^{d}}\int_{t}^{T}\E\big[ n|(u_{n}-\pi(u_{n}))(s,x)| \big]ds w
(x)dx \\
&\leq &\frac{1}{k_{2}}\Int_{\mathbb{R}^{d}}\int_{t}^{T}\E\big[ n|(u_{n}-\pi(u_{n}))(s,X_{t,s}(x))| \big]ds w
(x)dx.
\end{eqnarray*}
Finally, using \eqref{defKn} and \eqref{K-estimate}, we obtain
\begin{eqnarray*}
\E\big[|\pi _{n}|([0,T]\times \mathbb{R}^{d})\big]&\leq &\frac{1}{k_{2}}\Int_{\mathbb{R}^{d}} w(x)\E\left\|
K^{n,0,x}\right\|_{VT} dx\leq C\Int_{\mathbb{R}^{d}} w(x)dx<\infty .
\end{eqnarray*}
It follows that
\begin{equation}
\underset{n}{\Sup}\, \E\big[|\pi _{n}|([0,T]\times \mathbb{R}^{d})\big]<\infty .  \label{est-measure}
\end{equation}
Moreover  by Lemma \ref{tight} (see Appendix \ref{appendix:tight}),  the sequence of measures $(\pi _{n}(\omega,ds,dx))_{n  \in \mathbb N}$  is tight
$\P$-a.s. in $\omega\in\Omega$.  Therefore, there exits a subsequence such that $(\pi _{n}(\omega,ds,dx))_{n  \in \mathbb N}$  converges  weakly to a measure $\pi (\omega,ds,dx)$. Define $\nu:=w^{-1}\pi$, it remains now to prove that the limit $\nu (\omega,ds,dx)$ is measurable with respect to $\omega$ and satisfying Definition \ref{o-pde}-(ii).  We note first  that from \eqref{est-measure} and Fatou's Lemma, we get $\E\big[|\pi|([0,T]\times \mathbb{R}^{d})\big]<\infty$. We have also for $\varphi \in \mathcal{D}_T$ with compact support in $x$,%
$$
\Int_{\mathbb{R}^{d}}\Int_{t}^{T}\varphi d\nu _{n}=\Int_{\mathbb{R}%
^{d}}\Int_{t}^{T}\frac{\phi }{w }d\pi _{n}\rightarrow \Int_{\mathbb{R}
^{d}}\Int_{t}^{T}\frac{\phi }{w }d\pi =\Int_{\mathbb{R}
^{d}}\Int_{t}^{T}\varphi d\nu .
$$
Passing to the limit in the SPDE $(\Phi,f_{n},h)$ (\ref{o-equa1}), we get that $(u,\nu )$
satisfies the following equation, i.e. for every $\varphi \in
\mathcal{D}_T$, we have
\begin{eqnarray}
&&\Int_{t}^{T}(u(s,\cdot),\partial _{s}\varphi(s,\cdot) )ds+(u(t,\cdot ),\varphi (t,\cdot))-(\Phi(\cdot),\varphi (T,\cdot))-\Int_{t}^{T}(u(s,\cdot),\mathcal{L}^*\varphi(s,\cdot))ds
\nonumber \\
&-&\Int_{t}^{T}(f_s(\cdot, u(s,\cdot),\sigma^{*}\nabla u(s,\cdot)),\varphi(s,\cdot) )ds-\Int_{t}^{T}(h_s(\cdot),\varphi(s,\cdot) )d\W_s=\Int_{t}^{T}\Int_{\mathbb{R}
^{d}}\varphi (s,x)\nu (ds,dx),\, \P-a.s.\nonumber\\
  \label{equa1} 
\end{eqnarray}
Thus, the term in the right hand side of the last equation is measuble with respect to $\omega$ since it is the sum of measurable terms obtained as the limit of sequence of measurable processes. Therefore, the pair $(u,\nu )$ satisfies the reflected SPDE associated to  $(\Phi,f,h)$.\\
The last point is to prove that $\nu $ satisfies the probabilistic interpretation (\ref{con-k}). Since $K^{n,t,x}$ converges to $K^{t,x}$ uniformly in $t$, the
measure $dK^{n,t,x}$ converges to $dK^{t,x}$ weakly in probability.
Fix two continuous functions $\varphi $, $\psi $ : $[0,T]\times \mathbb{R}%
^{d}\rightarrow \mathbb{R}^{+}$ which have compact support in $x$ and a
continuous function with compact support $\theta :\mathbb{R}^{d}\rightarrow %
\mathbb{R}^{+}$, from Bally et al \cite{BCEF} (The proof of Theorem 4), we have (see also Matoussi and Xu \cite{MX08})
\begin{eqnarray*}
&&\int_{\mathbb{R}^{d}}\int_{t}^{T}\varphi (s,X^{-1}_{t,s}(x))J(X
^{-1}_{t,s}(x))\psi (s,x)\theta (x)\nu (ds,dx) \\
&=&\lim_{n\rightarrow \infty }-\int_{\mathbb{R}^{d}}\int_{t}^{T}\varphi (s,
X^{-1}_{t,s}(x))J(X^{-1}_{t,s}(x))\psi (s,x)\theta
(x)n(u_{n}-\pi(u_{n}))(s,x)dsdx \\
&=&\lim_{n\rightarrow \infty }-\int_{\mathbb{R}^{d}}\int_{t}^{T}\varphi
(s,x)\psi (s,X_{t,s}(x))\theta
(X_{t,s}(x))n(u_{n}-\pi(u_{n}))(t,X_{t,s}(x))dtdx \\
&=&\lim_{n\rightarrow \infty }\int_{\mathbb{R}^{d}}\int_{t}^{T}\varphi
(s,x)\psi (s,X_{t,s}(x))\theta (X_{t,s}(x))dK_{s}^{n,t,x}dx \\
&=&\int_{\mathbb{R}^{d}}\int_{t}^{T}\varphi (s,x)\psi (s,X_{t,s}(x))\theta
(X_{t,s}(x))dK_{s}^{t,x}dx.
\end{eqnarray*}
We take $\theta =\theta _{R}$ to be the regularization of the indicator
function of the ball of radius $R$ and pass to the limit with $R\rightarrow
\infty $, to get that
\begin{equation}\label{con-k1}
\Int_{\mathbb{R}^{d}}\int_{t}^{T}\varphi (s,X^{-1}_{t,s}(x))J(X^{-1}
_{t,s}(x))\psi (s,x)\nu (ds,dx)=\int_{\mathbb{R}^{d}}\int_{t}^{T}\varphi
(s,x)\psi (s,X_{t,s}(x))dK_{s}^{t,x}dx.
\end{equation}
From Section 2, it follows that $dK_{s}^{t,x}=1_{\{u\in\partial D\}}(s,X_{t,s}(x))dK_{s}^{t,x}$. Again by regularization procedure, we can set $\psi =1_{\{u\in\partial D\}}$ in (\ref{con-k1}) to obtain
\begin{align*}
&\Int_{\mathbb{R}^{d}}\int_{t}^{T}\varphi (s,X^{-1}_{t,s}(x))J(X^{-1}
_{t,s}(x))1_{\{u\in\partial D\}}(s,x)\nu (ds,dx)\\
&=\Int_{\mathbb{R}^{d}}\int_{t}^{T}\varphi
(s,X^{-1}_{t,s}(x))J(X^{-1}_{t,s}(x))\nu (ds,dx)\text{, a.s.}
\end{align*}
Note that the family of functions $A(\omega )=\{(s,x)\rightarrow \varphi (s,%
X^{-1}_{t,s}(x)):\varphi \in C_{c}^{\infty }\}$ is an algebra which
separates the points (because $x\rightarrow X^{-1}_{t,s}(x)$ is a
bijection). Given a compact set $G$, $A(\omega )$ is dense in $C([0,T]\times
G)$. It follows that $J(X^{-1}_{t,s}(x))1_{\{u\in\partial D\}}(s,x)\nu (ds,dx)=J(
X^{-1}_{t,s}(x))\nu (ds,dx)$ for almost every $\omega $. While $J(
X^{-1}_{t,s}(x))>0$ for almost every $\omega $, we get $\nu
(ds,dx)=1_{\{u\in\partial D\}}(s,x)\nu (ds,dx)$, and (\ref{con-k}) follows.\\
Then we get easily that $Y_{s}^{t,x}=u(s,X_{t,s}(x))$ and $
Z_{s}^{t,x}=(\nabla u\sigma)(s,X_{t,s}(x))$,  in view of the convergence
results for $(Y_{s}^{n,t,x},Z_{s}^{n,t,x})$ and the equivalence of norms. So $u(s,X_{t,s}(x))=Y_{s}^{t,x}\in \bar{D}$. Specially for $s=t$, we
have $u(t,x)\in \bar{D}$.\\[0.2cm]
\textit{ Step 2 } : \textit{The  nonlinear  case where $h$
depends on $y$ and $z$}.\\  
Let $(Y_s^{t,x},Z_s^{t,x},K_s^{t,x})$ the unique solution of RBDSDE \eqref{RBDSDE} associated to $(\Phi,f,h)$. In order to avoid standard fixed point arguments, we define $H(s,x)\triangleq h(x,X_{t,s}(x),Y_s^{t,x},Z_s^{t,x}).$ Due to the fact that
$h$ is Lipschitz with respect to $(y,z)$, we have
$$ |H(s,x)|\leq |h_s^0(X_{t,s}(x))|+ C( |Y_s^{t,x}|+ |Z_s^{t,x}|).$$
Besides, by standard computations similar to estimate \eqref{K-estimate} we get
\begin{eqnarray}\label{estimate}
 &\E\left[\left| Y_{s}^{t,x}\right| ^{2}+\Int_{t}^{T}\left\|
Z_{r}^{t,x}\right\|^2dr + \|K^{t,x}\|_{VT}\right] \leq C\left( T,x\right),
\end{eqnarray} 
where  $$ C (T,x):=  C\,\E \Big[ \left| \Phi (X_{t,T} (x))\right|^2 + \Int_t^T  \big( \left| f_s^0 (X_{t,s} (x))\right|^2 \,  + \left| h_s^0 (X_{t,s} (x))\right|^2 \, \big)\, ds \Big].$$
Integrating the estimate \eqref{estimate} over $x$ with respect to the measure $w(x)dx$ and thanks to the equivalence norm results, we conclude that $H$ belongs to $ L^2([0,T]\times\Omega\times \R^d; dt\otimes d\P\otimes w(x)dx)$. Then, 
applying the result of Step 1 yields that there exists $(u,\nu )$ where 
$u(t,x):=Y_t^{t,x}$,  $dt\otimes d\P\otimes w(x)dx-a.e.$ and satisfying the SPDE with obstacle $(\Phi,f,H)$, i.e. for every $\varphi \in
\mathcal{D}_T$, we have
\begin{eqnarray}
&&\Int_{t}^{T}(u(s,\cdot),\partial _{s}\varphi(s,\cdot) )ds+(u(t,\cdot ),\varphi (t,\cdot))-(\Phi(\cdot),\varphi (T,\cdot))-\Int_{t}^{T}(u(s,\cdot),\mathcal{L}^*\varphi(s,\cdot))ds
\nonumber \\
&=&\Int_{t}^{T}(f_s(\cdot, u(s,\cdot),\sigma^{*}\nabla u(s,\cdot)),\varphi(s,\cdot) )ds+\Int_{t}^{T}(H_s(\cdot),\varphi(s,\cdot) )d\W_s\nonumber\\
&+&\int_{t}^{T}\Int_{\mathbb{R}
^{d}}\phi (s,x)1_{\{u\in\partial D\}}(s,x)\nu (ds,dx).  \label{equa2}
\end{eqnarray}
Then by the uniqueness of the solution to the RBDSDE ($\Phi(X_{t,T}(x))$, $f$, $h$),  we get easily that $Y_{s}^{t,x}=u(s,X_{t,s}(x))$, $%
Z_{s}^{t,x}=(\nabla u\sigma)(s,X_{t,s}(x))$, and $\nu$ satisfies the probabilistic interpretation (\ref{con-k}). So $u(s,X_{t,s}(x))=Y_{s}^{t,x}\in\bar{D}$. Specially for $s=t$, we have $u(t,x)\in\bar{D}$, which is the desired result.\\

\noindent\textbf{b) Uniqueness } :  Set $(\overline{u},\overline{\nu
})$ to be another weak solution of the reflected SPDE (\ref{OSPDE1}) associated to 
$(\Phi,f,h)$; with $\overline{\nu }$ verifies (\ref{con-k}) for a continuous finite variation process $(\overline{K}_s^{t,x})_{t\leq s\leq T}$. We fix $\varphi :\mathbb{R}^{d}\rightarrow \mathbb{R}^k$, a smooth function in $C_{c}^{2}(%
\mathbb{R}^{d})$ with compact support and denote $\varphi _{t}(s,x)=\varphi (X^{-1}_{t,s}(x))J(X^{-1}_{t,s}(x))$. From Proposition \ref{weak:Itoformula1 conv}, one may use $\varphi _{t}(s,x)$ as a test function in the SPDE $(\Phi,f,h)$ with $\partial _{s}\varphi (s,x)ds$ replaced by a stochastic integral with respect to the semimartingale $\varphi _{t}(s,x)$. Then we get, for $t\leq s\leq T$

\begin{align}
&\Int_{\mathbb{R}^{d}}\int_{s}^{T}\overline{u}(r,x)d\varphi _{t}(r,x)dx+\Int_{\mathbb{R}^{d}}\overline{u}(s,x)\varphi _{t}(s,x)dx-\Int_{\mathbb{R}^{d}}\Phi(x)\varphi _{t}(T,x)dx-\Int_{s}^{T}\int_{\mathbb{R}^{d}}\overline{u}(r,x)\mathcal{L}^*\varphi _{t}(r,x)drdx  \label{o-pde-u1}
\nonumber \\
&=\Int_{s}^{T}\Int_{\mathbb{R}^{d}}f_r(x,\overline{u}(r,x),(\nabla \overline{u}\sigma)(r,x))\varphi _{t}(r,x)drdx+ \Int_{s}^{T}\Int_{\mathbb{R}^{d}}h_r(x,\overline{u}(r,x),(\nabla \overline{u}\sigma)(r,x))\varphi _{t}(r,x)dx d\W_r\nonumber \\
& +\Int_{s}^{T}\Int_{\mathbb{
R}^{d}}\varphi _{t}(r,x)1_{\{\overline{u}\in\partial D\}}(r,x)\overline{\nu }(dr,dx).
\end{align}
By (\ref{decomp conv}) in Lemma \ref{decomposition conv}, we have

\begin{align*}
&\Int_{\mathbb{R}^{d}}\Int_{s}^{T}\overline{u}(r,x)d\varphi _{t}(r,x)dx
=\Int_{s}^{T}(\Int_{\mathbb{R}^{d}}(\nabla \overline{u}\sigma)(r,x)\varphi_{t}(r,x)dx)dB_{r}+\Int_{s}^{T}\Int_{\mathbb{R}^{d}}\overline{u}(r,x)\mathcal{L}^*\varphi _{t}(r,x)drdx .\\
\end{align*}
Substituting this equality in (\ref{o-pde-u1}), we get
\begin{align*}
&\Int_{\mathbb{R}^{d}}\overline{u}(s,x)\varphi _{t}(s,x)dx =\Int_{\mathbb{R}^{d}}\Phi(x)\varphi _{t}(T,x)dx-\Int_{s}^{T}(\Int_{\mathbb{R}^{d}}(\nabla \overline{u}\sigma)(r,x)\varphi
_{t}(r,x)dx)dB_{r}\\
&+\Int_{s}^{T}\Int_{\mathbb{R}^{d}}f_r(x,\overline{u}(r,x),(\nabla \overline{u}\sigma)(r,x))\varphi _{t}(r,x)drdx+ \Int_{s}^{T}\Int_{\mathbb{R}^{d}}h_r(x,\overline{u}(r,x),(\nabla \overline{u}\sigma)(r,x))\varphi _{t}(r,x)dx d\W_r\\&+\Int_{s}^{T}\Int_{\mathbb{
R}^{d}}\varphi _{t}(r,x)1_{\{\overline{u}\in\partial D\}}(r,x)\overline{\nu }(dr,dx).
\end{align*}
Then by changing of variable $y=X^{-1}_{t,r}(x)$ and applying (\ref
{con-k}) for $\overline{\nu }$, we obtain
\begin{align*}
&\Int_{\mathbb{R}^{d}}\overline{u}(s,X_{t,s}(y))\varphi (y)dy =\Int_{\mathbb{R}^{d}}\Phi(X_{t,T}(y))\varphi (y)dy\\
&+\Int_{\mathbb{R}^{d}}\Int_{s}^{T}\varphi(y)f_r(X_{t,r}(y),\overline{u}(r,X_{t,r}(y)),(\nabla \overline{u}\sigma)(r,X_{t,r}(y)))drdy\\
& +\Int_{\mathbb{R}^{d}}\Int_{s}^{T}\varphi(y)h_r(X_{t,r}(y),\overline{u}(r,X_{t,r}(y)),(\nabla \overline{u}\sigma)(r,X_{t,r}(y)))dyd\W_r \\
&+\Int_{s}^{T}\Int_{\mathbb{R}^{d}}\phi (y)1_{\{\overline{u}\in\partial D\}}(r,X_{t,s}(y))d\overline{K}_{r}^{t,y}dy-\Int_{s}^{T}\Int_{\mathbb{R}
^{d}}\varphi (y)(\nabla \overline{u}\sigma)(r,X_{t,r}(y))dydB_{r}
\end{align*}
Since $\varphi $ is arbitrary, we can prove that for $w(y)dy$ almost every $
y$, ($\overline{u}(s,X_{t,s}(y))$, $(\nabla \overline{u}\sigma
)(s,X_{t,s}(y))$, $\widehat{K}_{s}^{t,y}$) solves the RBDSDE 
$(\Phi(X_{t,T}(y)),f,h)$. Here $\widehat{K}_{s}^{t,y}$=$\Int_{t}^{s}1_{\{%
\overline{u}\in\partial D\}}(r,X_{t,r}(y))d\overline{K}_{r}^{t,y}$. Then by the
uniqueness of the solution of the RBDSDE, we know $\overline{u}%
(s,X_{t,s}(y))=Y_{s}^{t,y}=u(s,X_{t,s}(y))$, $(\nabla \overline{
u}\sigma)(s,X_{t,s}(y))=Z_{s}^{t,y}=(\nabla u\sigma)(s,X_{t,s}(y))$,  and $%
\widehat{K}_{s}^{t,y}=K_{s}^{t,y}$. Taking $s=t$ we deduce that $\overline{u}%
(t,y)=u(t,y)$, $w(y)dy$-a.s. and by the probabilistic interpretation (
\ref{con-k}), we obtain
$$
\Int_{s}^{T}\Int_{\R^d} \varphi _{t}(r,x)1_{\{\overline{u}\in\partial D\}}(r,x)\overline{\nu }
(dr,dx)=\Int_{s}^{T}\Int_{\R^d} \varphi _{t}(r,x)1_{\{u\in \partial D\}}(r,x)\nu (dr,dx).
$$
So $1_{\{\overline{u}\in\partial D\}}(r,x)\overline{\nu
}(dr,dx)=1_{\{u\in\partial D\}}(r,x)\nu (dr,dx)$.\\
\ep 

\appendix
\section{Some properties of convexity}
\label{Some properties of convexity}
In this section, we will list some properties of convex domains.
Denote $\rho(x)\triangleq (d(x,D))^2=|x-\pi(x)|^2$, the square of the distance to the domain $D$ and $\pi$ the projection on the closure $\bar{D}$. If $D$ is a convex domain, then the function $\rho$ is convex. Moreover, if $D$ is a regular domain, $\rho$ is two times differentiable on the complement of $D$ and we obtain:
\b*
\forall x\notin D, \quad \nabla\rho(x)=2(x-\pi(x)).
\e*
From this expression of gradient $\nabla\rho$, we remark that the hessian matrix $Hess \rho(x)$ has the following form:
$$Hess \rho(x) = 
\begin{pmatrix}
2 & 0& \cdots &0\\
0\\
\vdots & & (M)\\
0
\end{pmatrix}$$
where $M$ is a positive semidefinite matrix. We deduce also that:
\be \label{hess}
\forall z\in\R^{k\times d}\quad \text{trace}[zz^\ast Hess \rho(x)]\geq 0
\ee
Since $Hess$ is a positive matrix we have for every unit outward normal $n(x)$
\be
|z (n(x))^\ast|^2&\leq& C\Frac{1}{2}~\text{trace}\Big[zz^\ast\begin{pmatrix}
2 & 0& \cdots &0\\
0\\
\vdots & & (M)\\
0
\end{pmatrix}\Big]\nonumber\\
&\leq & C ~\text{trace}[zz^\ast Hess \rho(x)]\label{hessZ}
\ee
\begin{Lemma}\label{convprop1}
If $D_\varepsilon$ is a convex domain which satisfies (\ref{propappro}), then $\exists c>0$ such that $\forall \varepsilon <1, \forall x\in\R^k$
\b*
|\pi(x)-\pi_{\varepsilon}(x)| < c\sqrt{\varepsilon^2+\varepsilon d(x,D_\varepsilon)}\quad \mbox{and}\quad |\pi(x)-\pi_\varepsilon(x)| < c\sqrt{\varepsilon^2+\varepsilon d(x,D)}
\e*
where $\pi_\varepsilon(x)$ is the projection of $x\in\R^k$ on the closure $\bar{D}_\varepsilon$
\end{Lemma}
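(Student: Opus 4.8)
The plan is to exploit the variational characterization of the metric projection onto a closed convex set together with the closeness of $\bar D$ and $\bar D_\varepsilon$ guaranteed by \reff{propappro}. Write $p:=\pi(x)\in\bar D$ and $q:=\pi_\varepsilon(x)\in\bar D_\varepsilon$. Applying the projection inequality to $\bar D$ and to $\bar D_\varepsilon$ respectively (compare \reff{prop1}) gives
\[
(y-p)^*(x-p)\le 0\quad\forall\, y\in\bar D,\qquad (z-q)^*(x-q)\le 0\quad\forall\, z\in\bar D_\varepsilon.
\]
The difficulty is that $p$ need not lie in $\bar D_\varepsilon$ and $q$ need not lie in $\bar D$, so these two inequalities cannot be combined directly; this is exactly where \reff{propappro} enters.

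Since $p\in\bar D$, continuity of $d(\cdot,D_\varepsilon)$ together with \reff{propappro} produces a point $p'\in\bar D_\varepsilon$ with $|p-p'|\le\varepsilon$ (for instance $p'=\pi_\varepsilon(p)$, since $|p-\pi_\varepsilon(p)|=d(p,D_\varepsilon)\le\varepsilon$); symmetrically there is $q'\in\bar D$ with $|q-q'|\le\varepsilon$. Inserting $z=p'$ and $y=q'$ into the two projection inequalities yields $(p'-q)^*(x-q)\le 0$ and $(q'-p)^*(x-p)\le 0$. The key computation is then the decomposition
\[
|p-q|^2=(p-q)^*(x-q)+(q-p)^*(x-p),
\]
in which each term is controlled by splitting off the admissible point: writing $p-q=(p-p')+(p'-q)$ gives $(p-q)^*(x-q)\le(p-p')^*(x-q)\le|p-p'|\,|x-q|\le\varepsilon\,d(x,D_\varepsilon)$, and likewise $(q-p)^*(x-p)\le\varepsilon\,d(x,D)$, where I use $|x-q|=d(x,D_\varepsilon)$ and $|x-p|=d(x,D)$. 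Summing yields the basic estimate
\[
|\pi(x)-\pi_\varepsilon(x)|^2\le \varepsilon\,d(x,D)+\varepsilon\,d(x,D_\varepsilon).
\]

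To reach the two stated forms I would finish with the $1$-Lipschitz dependence of the distance function on the reference set: \reff{propappro} says the Hausdorff distance between $\bar D$ and $\bar D_\varepsilon$ is at most $\varepsilon$, hence $|d(x,D)-d(x,D_\varepsilon)|\le\varepsilon$ for every $x\in\R^k$. Substituting $d(x,D_\varepsilon)\le d(x,D)+\varepsilon$ into the basic estimate gives $|\pi(x)-\pi_\varepsilon(x)|^2\le 2\varepsilon\,d(x,D)+\varepsilon^2\le 2\big(\varepsilon^2+\varepsilon\,d(x,D)\big)$, which is the second inequality with $c=\sqrt2$; substituting $d(x,D)\le d(x,D_\varepsilon)+\varepsilon$ instead gives the first. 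The strict inequality of the statement then follows either by enlarging $c$ slightly or by invoking the strict bounds in \reff{propappro}. The only genuine obstacle is the cross-term bookkeeping in the decomposition of $|p-q|^2$, together with the minor care needed to pass from the open sets $D,D_\varepsilon$ in \reff{propappro} to their closures (handled by continuity of $d(\cdot,\cdot)$); everything else is routine.
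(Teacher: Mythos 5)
Your proof is correct, and every step checks out: the algebraic identity
\[
|p-q|^2=(p-q)^*(x-q)+(q-p)^*(x-p)
\]
is valid, the insertion of the auxiliary points $p'=\pi_\varepsilon(p)$ and $q'=\pi(q)$ (with $|p-p'|\le\varepsilon$, $|q-q'|\le\varepsilon$ by \reff{propappro} extended to closures by continuity) into the two variational inequalities is exactly the right way to couple the two projections, and the final swap between $d(x,D)$ and $d(x,D_\varepsilon)$ via the $1$-Lipschitz dependence of the distance function on the reference set legitimately converts the mixed bound $|p-q|^2\le\varepsilon\,d(x,D)+\varepsilon\,d(x,D_\varepsilon)$ into both stated forms, with an explicit constant ($c=2$ suffices for strict inequality when $\varepsilon>0$). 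There is, however, nothing in the paper to compare your argument against: Lemma \ref{convprop1} is stated in the appendix without proof, the authors referring such convexity facts to Menaldi's work (\cite{M83}, cited in Section \ref{convexity:subsubsection}). So your contribution is a genuinely self-contained derivation of a result the paper only quotes; it relies on nothing beyond the standard variational characterization of the projection onto a closed convex set (which implies the paper's \reff{prop1}) and the uniform approximation property \reff{propappro}, and it produces an explicit constant, which the paper's statement leaves abstract. The only stylistic caveat is that your parenthetical claim that the Hausdorff bound gives strict inequalities is unnecessary -- simply enlarging $c$, as you also suggest, is the cleaner route, since \reff{propappro} alone only yields $|p-p'|\le\varepsilon$ after passing to closures.
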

\begin{Lemma}\label{convprop2}
$$\exists c>0 \quad\mbox{such that}\quad \forall \varepsilon <1,~ \forall x\in\R^k~~
|\pi(x)-\pi_{\varepsilon}(x)| < c\sqrt{\varepsilon}(1+d(x,D_\varepsilon))$$
$$\exists c>0 \quad\mbox{such that}\quad \forall \varepsilon <1,~ \forall x\in\R^k~~
|\pi(x)-\pi_{\varepsilon}(x)|{\bf{1}}_{\{d(x,D_\varepsilon)>\varepsilon\}} < c\sqrt{\varepsilon}\sqrt{d(x,D_\varepsilon)}{\bf{1}}_{\{d(x,D_\varepsilon)>\varepsilon\}}$$
\end{Lemma}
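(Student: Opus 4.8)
The plan is to obtain both estimates as elementary consequences of Lemma \ref{convprop1}, whose bound $|\pi(x)-\pi_\varepsilon(x)| < c\sqrt{\varepsilon^2+\varepsilon d(x,D_\varepsilon)}$ already contains all the geometric content. No further appeal to convexity or to the approximation property \reff{propappro} is needed; the two inequalities of Lemma \ref{convprop2} differ only in how one majorizes the single factor $\sqrt{\varepsilon^2+\varepsilon d(x,D_\varepsilon)}$.

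For the first inequality I would factor the square root as $\sqrt{\varepsilon}\,\sqrt{\varepsilon+d(x,D_\varepsilon)}$ and then bound the second factor crudely. Because $\varepsilon<1$ one has $\varepsilon+d(x,D_\varepsilon)<1+d(x,D_\varepsilon)$, and since $1+d(x,D_\varepsilon)\geq 1$ the elementary inequality $\sqrt{a}\leq a$, valid for $a\geq 1$, gives $\sqrt{1+d(x,D_\varepsilon)}\leq 1+d(x,D_\varepsilon)$. Chaining these yields
$$|\pi(x)-\pi_\varepsilon(x)|<c\sqrt{\varepsilon}\,(1+d(x,D_\varepsilon)),$$
which is the first claim.

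For the second inequality I would instead work on the event $\{d(x,D_\varepsilon)>\varepsilon\}$. There the strict inequality $\varepsilon<d(x,D_\varepsilon)$ forces $\varepsilon^2<\varepsilon\,d(x,D_\varepsilon)$, hence $\varepsilon^2+\varepsilon d(x,D_\varepsilon)<2\,\varepsilon\,d(x,D_\varepsilon)$. Substituting this into the bound of Lemma \ref{convprop1} and multiplying through by ${\bf{1}}_{\{d(x,D_\varepsilon)>\varepsilon\}}$ produces
$$|\pi(x)-\pi_\varepsilon(x)|\,{\bf{1}}_{\{d(x,D_\varepsilon)>\varepsilon\}}<c\sqrt{2}\,\sqrt{\varepsilon}\,\sqrt{d(x,D_\varepsilon)}\,{\bf{1}}_{\{d(x,D_\varepsilon)>\varepsilon\}},$$
and absorbing $\sqrt{2}$ into the constant finishes the argument.

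Since everything reduces to arithmetic once Lemma \ref{convprop1} is available, there is no genuine obstacle here. The only points requiring attention are bookkeeping ones: tracking where $\varepsilon<1$ enters (the first estimate) as opposed to $\varepsilon<d(x,D_\varepsilon)$ (the second), and verifying that the resulting constants are uniform in $\varepsilon$ and $x$, which is immediate because the constant supplied by Lemma \ref{convprop1} already is.
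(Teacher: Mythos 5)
Your proof is correct. Note, however, that the paper itself gives no proof of Lemma \ref{convprop2}: it is simply listed in Appendix \ref{Some properties of convexity}, together with Lemma \ref{convprop1}, among the convexity properties drawn from Menaldi \cite{M83}, so there is no argument in the text to compare yours against. Your reduction is the natural one and it is complete: factoring $\sqrt{\varepsilon^2+\varepsilon d(x,D_\varepsilon)}=\sqrt{\varepsilon}\,\sqrt{\varepsilon+d(x,D_\varepsilon)}$, then using $\varepsilon<1$ and the elementary bound $\sqrt{a}\le a$ for $a\ge 1$ gives the first estimate, while $\varepsilon^2<\varepsilon\, d(x,D_\varepsilon)$ on the set $\{d(x,D_\varepsilon)>\varepsilon\}$ gives the second with the factor $\sqrt{2}$ absorbed into $c$; uniformity of the constants in $\varepsilon$ and $x$ is indeed inherited from Lemma \ref{convprop1}. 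One cosmetic remark: off the set $\{d(x,D_\varepsilon)>\varepsilon\}$ both sides of the second inequality vanish, so the strict inequality written with indicators on both sides cannot literally hold there; this quirk is present in the paper's own statement of the lemma, and what you proved (the bound on the set itself) is clearly the intended content.
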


\section{A priori estimates}
\label{A priori estimates}
In this section, we provide a priori estimates which are uniform in $n$ on the solutions of (\ref{BDSDEpen}).
\begin{Lemma}\label{estapriori}
There exists a constant $C>0$, independent of $n$, such that for all $n$ large enough
\be
\underset{n}{\Sup}\,\E\,\big[\underset{0\leq t\leq T}{\Sup}|Y_t^n|^2+\Int_t^T \|Z_s^n\|^2 ds + \|K^n\|_{VT}\big]\leq C\E\big[|\xi^2|+\Int_t^T \big(|f_s^0|^2+|h_s^0|^2\big)ds\big]
\label{estunif}. 
\ee
\end{Lemma}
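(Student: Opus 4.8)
The plan is to apply the generalized It\^o formula to $|Y_t^n-a|^2$, where $a\in D$ and $\gamma>0$ are the point and the constant furnished by the convexity property \eqref{prop3}. Writing the penalization as $K^n_t=-n\Int_0^t(Y_s^n-\pi(Y_s^n))ds$ and recalling that its variation over $[t,T]$ is $\|K^n\|_{V[t,T]}=n\Int_t^T|Y_s^n-\pi(Y_s^n)|ds$, the decisive observation is that the penalization contributes $-2n\Int_t^T(Y_s^n-a)^*(Y_s^n-\pi(Y_s^n))ds$, which by \eqref{prop3} is dominated by $-2\gamma n\Int_t^T|Y_s^n-\pi(Y_s^n)|ds=-2\gamma\|K^n\|_{V[t,T]}$. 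Thus, after moving this negative term to the left, It\^o's formula (Lemma 1.3 in \cite{pp1994}) gives
\b*
|Y_t^n-a|^2+\Int_t^T\|Z_s^n\|^2ds+2\gamma\|K^n\|_{V[t,T]}
&\leq& |\xi-a|^2+2\Int_t^T(Y_s^n-a)^*f_s(Y_s^n,Z_s^n)ds\\
&&+\Int_t^T\|h_s(Y_s^n,Z_s^n)\|^2ds+2\Int_t^T(Y_s^n-a)^*h_s(Y_s^n,Z_s^n)d\W_s\\
&&-2\Int_t^T(Y_s^n-a)^*Z_s^ndB_s.
\e*

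Next I would bound the drift and the $h$-quadratic term using the Lipschitz Assumption \ref{Ass2} together with Young's inequality. One has $2(Y_s^n-a)^*f_s(Y_s^n,Z_s^n)\leq \varepsilon^{-1}|Y_s^n-a|^2+\varepsilon|f_s(Y_s^n,Z_s^n)|^2$ with $|f_s(Y_s^n,Z_s^n)|^2\leq 2|f_s^0|^2+2c(|Y_s^n|^2+\|Z_s^n\|^2)$, while the splitting $h_s(Y_s^n,Z_s^n)=h_s^0+(h_s(Y_s^n,Z_s^n)-h_s^0)$ and the Lipschitz bound $\|h_s(y,z)-h_s^0\|^2\leq c|y|^2+\alpha\|z\|^2$ yield $\|h_s(Y_s^n,Z_s^n)\|^2\leq (1+\delta)(c|Y_s^n|^2+\alpha\|Z_s^n\|^2)+(1+\delta^{-1})\|h_s^0\|^2$. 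The constants $\varepsilon,\delta>0$ are then chosen small enough that the net coefficient of $\Int_t^T\|Z_s^n\|^2ds$, namely $1-2c\varepsilon-(1+\delta)\alpha$, stays strictly positive; this is possible precisely because $\alpha<1$. Taking expectation kills the forward $dB_s$ and backward $d\W_s$ stochastic integrals, so that with a constant $C$ independent of $n$,
\b*
\E|Y_t^n-a|^2+C^{-1}\E\Int_t^T\|Z_s^n\|^2ds+2\gamma\,\E\|K^n\|_{V[t,T]}
&\leq& \E|\xi-a|^2\\
&&+C\,\E\Int_t^T(|f_s^0|^2+\|h_s^0\|^2)ds+C\,\E\Int_t^T|Y_s^n|^2ds.
\e*

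Dropping the two nonnegative terms on the left and applying Gronwall's lemma to $t\mapsto\E|Y_t^n-a|^2$ gives a uniform bound $\underset{n}{\Sup}\,\underset{0\leq t\leq T}{\Sup}\,\E|Y_t^n|^2\leq C\,\E[\,|\xi|^2+\Int_0^T(|f_s^0|^2+\|h_s^0\|^2)ds\,]$; re-injecting this into the displayed inequality with $t=0$ controls $\E\Int_0^T\|Z_s^n\|^2ds$ and $\E\|K^n\|_{VT}$ by the same right-hand side, again with a constant free of $n$. To pass from $\Sup_t\E$ to $\E\Sup_t$ for $Y^n$, I would take the supremum over $t$ in the It\^o identity \emph{before} taking expectation and estimate the two martingale terms by the Burkholder--Davis--Gundy inequality; the resulting factors $\E[\,\Sup_t|Y_t^n-a|^2\,]^{1/2}$ are absorbed by Young's inequality into the left-hand side, leaving the announced estimate \eqref{estunif}.

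The main obstacle is the bookkeeping of constants so that uniformity in $n$ is preserved at every step. The coefficient of $\Int\|Z^n\|^2$ must remain positive after absorbing both the $f$- and $h$-contributions, which forces the simultaneous smallness of $\varepsilon$ and $\delta$ in a way compatible with $\alpha<1$; and, most importantly, the coercivity constant $\gamma$ from \eqref{prop3} must be the \emph{only} mechanism bounding $\|K^n\|_{VT}$, so that no term carrying a factor of $n$ may survive on the right-hand side. Verifying that the penalization term is controlled purely through $\gamma$, rather than leaving an $n$-dependent remainder, is exactly what makes the bound uniform and is the crux of the argument.
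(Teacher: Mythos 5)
Your proposal is correct and follows essentially the same route as the paper: It\^o's formula applied to $|Y_t^n-a|^2$ with the point $a$ and constant $\gamma$ from \eqref{prop3}, Lipschitz--Young absorption of the $f$ and $h$ terms using $\alpha<1$, Gronwall, and Burkholder--Davis--Gundy for the pathwise supremum. The only (cosmetic) difference is that you keep the term $2\gamma\|K^n\|_{V[t,T]}$ on the left-hand side from the outset, whereas the paper first drops the nonpositive penalization contribution to estimate $(Y^n,Z^n)$ and then returns to the same identity with \eqref{prop3} to bound $\|K^n\|_{VT}$; the two bookkeeping schemes are equivalent.
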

\begin{proof}
For a given point $a\in D$, that satisfies condition (\ref{prop3}), we apply generalized It\^o's formula to get
\begin{align}\label{estimationuniforme}
|Y_t^n-a|^2+\Int_t^T &\|Z_s^n\|^2ds= |\xi-a|^2 + 2\Int_t^T (Y_s^n-a)^\ast f_s(Y_s^n,Z_s^n)ds+2\Int_t^T (Y_s^n-a)^\ast h_s(Y_s^n,Z_s^n)d\W_s\nonumber\\
 &-2\Int_t^T(Y_s^n-a)^\ast Z_s^n dB_s+\Int_t^T\|h_s(Y_s^n,Z_s^n)\|^2ds- 2n\Int_t^T (Y_s^n-a)^\ast(Y_s^n-\pi(Y_s^n))ds.
\end{align}
The stocastic integrals have both zero expectations since $(Y^n,Z^n)$ belongs to ${\mathcal S}^2_k([0,T])\times{\mathcal H}^2_{k\times d}([0,T])$. We take expectation in (\ref{estimationuniforme}) and we use conditions (\ref{prop1}), (\ref{prop3}) and the Lipschitz Assumption \ref{Ass2} in order to obtain
\begin{align}
\begin{split}
\E[|Y_t^n-a|^2]&+\E[\Int_t^T \|Z_s^n\|^2ds]\leq\E[|\xi-a|^2]+2C\E[\Int_t^T |Y_s^n-a|(|f_s(a,0)|+ |Y_s^n-a|+ \|Z_s^n\|)ds]\\
&+\E[\Int_t^T|h_s(a,0)|^2ds] + C\E[\Int_t^T |Y_s^n-a|^2ds]+ \alpha\E[\Int_t^T \|Z_s^n\|^2ds]\\
&\leq C(1+\E[\Int_t^T (|f_s(a,0)|^2+|h_s(a,0)|^2)ds] +C(1+\varepsilon^{-1})\E[\Int_t^T |Y_s^n-a|^2ds]\\
&+(\alpha+\varepsilon)\E[\Int_t^T \|Z_s^n\|^2ds].
\end{split}
\end{align}
Thus, if we choose $\varepsilon=\Frac{1-\alpha}{2}$, we have 
\begin{align*}
\E[|Y_t^n-a|^2]+(\Frac{1-\alpha}{2})\E[\Int_t^T \|Z_s^n\|^2ds]&\leq  C(1+\E[\Int_t^T |Y_s^n-a|^2ds]).
\end{align*}
Then, it follows from Gronwall's lemma that 
$$\underset{0\leq t\leq T}{\Sup}\E[|Y_t^n-a|^2]\leq Ce^{CT}.$$
Therefore we can deduce 
\be
\underset{0\leq t\leq T}{\Sup}\E[|Y_t^n|^2]\leq C\quad\text{and}\quad \E[\Int_0^T \|Z_s^n\|^2ds]\leq C.
\ee
On the other hand, the uniform estimate on $Y^n$ is obtained by taking the supremum over $t$ in the equation (\ref{estimationuniforme}), using the previous calculations and Burkholder-Davis-Gundy inequality. Thus, we get for all $n\geq 0$
\b*
\E[\underset{0\leq t\leq T}{\Sup}|Y_t^n-a|^2]\leq C \quad \text{and}\quad \E[\underset{0\leq t\leq T}{\Sup}|Y_t^n|^2]\leq C.
\e*
Finally, the total variation of the process $K^n$ is given by
\b*
\|K^n\|_{VT}= n\Int_0^T |Y_s^n-\pi(Y_s^n)|ds.
\e*
But from the property (\ref{prop3}) and the equation (\ref{estimationuniforme}) we have
\begin{align*}
\begin{split}
2n\Int_t^T \gamma |Y_s^n-\pi(Y_s^n)|ds &\leq  2n\Int_t^T |(Y_s^n-a)^\ast(Y_s^n-\pi(Y_s^n))|ds\\
&\leq  |\xi-a|^2 + 2\Int_t^T (Y_s^n-a)^\ast f_s(Y_s^n,Z_s^n)ds+2\Int_t^T (Y_s^n-a)^\ast h_s(Y_s^n,Z_s^n)d\W_s\nonumber\\
 &- 2\Int_t^T(Y_s^n-a)^\ast Z_s^n dB_s+\Int_t^T\|h_s(Y_s^n,Z_s^n)\|^2ds.
 \end{split}
\end{align*}
Hence it follows from previous estimates that 
\b*
\E[ \|K^n\|_{VT}]\leq C,
\e*
and the proof of Lemma \ref{estapriori} is complete.\ep
\end{proof}
\section{Proof of Lemma \ref{extraestimate}}
\label{Proof of Lemma}
Let first recall that $(Y^n,Z^n)$ is solution of the BDSDE \eqref{BDSDEpen} associated to $(\xi,f^n,h)$ where $f_s^n(y,z)=f_s(y,z)-n(y-\pi(y))$, for each $(y,z)\in\R^k\times\R^{k\times d}$. Note that , since we have assumed that $0\in D$  (Assumption \ref{assumptiondomain}), $f_{{\color{red}{s}}}^n(0,0)=f_s(0,0):=f_s^0$. Therefore, thanks to $L^p$-estimate for BDSDE (Theorem 4.1 in \cite{pp1994} applied for $p=4$), we have the following estimate
\be \label{estimateL4}
 \underset{n}{\Sup}\,\E[\underset{0\leq t\leq t}{\Sup}|Y_t^n|^4+\Big(\Int_0^T\|Z_s^n  \|^2ds\Big)^2] \leq C \E\Big[|\xi|^4+\Int_0^T (|f_s^0|^4+ |h_s^0|^4)ds \Big]<\infty.
\ee
Now, we apply generalized It\^o's formula to get
\begin{align}\label{estimat}
|Y_t^n|^2+\Int_t^T &\|Z_s^n\|^2ds+ 2n\Int_t^T (Y_s^n)^\ast(Y_s^n-\pi(Y_s^n))ds= |\xi|^2 + 2\Int_t^T (Y_s^n)^\ast f_s(Y_s^n,Z_s^n)ds\nonumber\\
&+2\Int_t^T (Y_s^n)^\ast h_s(Y_s^n,Z_s^n)d\W_s
 -2\Int_t^T(Y_s^n)^\ast Z_s^n dB_s+\Int_t^T\|h_s(Y_s^n,Z_s^n)\|^2ds.
\end{align}
Without loss of generality we apply the property (\ref{prop3}) for $a=0$ since $0\in D$ (If not, we apply the generalized It\^o formula to $|Y^n_t-a|^2$) to obtain
\begin{align*}
\begin{split}
2n\Int_0^T \gamma |Y_s^n-\pi(Y_s^n)|ds &\leq  2n\Int_0^T |(Y_s^n)^\ast(Y_s^n-\pi(Y_s^n))|ds\\
&\leq  |\xi|^2 + 2\Int_0^T (Y_s^n)^\ast f_s(Y_s^n,Z_s^n)ds+2\Int_0^T (Y_s^n)^\ast h_s(Y_s^n,Z_s^n)d\W_s\nonumber\\
 &- 2\Int_0^T(Y_s^n)^\ast Z_s^n dB_s+\Int_0^T\|h_s(Y_s^n,Z_s^n)\|^2ds.
 \end{split}
\end{align*}
Then, taking the square and the expectation yields
\begin{align*}
\begin{split}
\E\Big[\Big(n\Int_0^T |Y_s^n-\pi(Y_s^n)|ds\Big)^2\Big] 
&\leq C \E\Big[|\xi|^4\Big] + C\E\Big[\Big(\Int_0^T (Y_s^n)^\ast f_s(Y_s^n,Z_s^n)ds\Big)^2\Big]\nonumber\\ 
&+ C\E\Big[\Big(\Int_0^T (Y_s^n)^\ast h_s(Y_s^n,Z_s^n)d\W_s\Big)^2\Big]
+ C\E\Big[\Big(\Int_0^T(Y_s^n)^\ast Z_s^n dB_s\Big)^2\Big]\nonumber\\
&+C\E\Big[\Big(\Int_0^T\|h_s(Y_s^n,Z_s^n)\|^2ds\Big)^2\Big].
 \end{split}
\end{align*}
By using the isometry property and the boundedness of $f$ and $h$, we obtain
\begin{align*}
\begin{split}
\E\Big[\Big(n\Int_0^T |Y_s^n-\pi(Y_s^n)|&ds\Big)^2\Big] 
\leq C \E\Big[|\xi|^4\Big] + C\E\Big[\Big(\Int_0^T {\color{red}{|}}Y_s^n{\color{red}{|}} ds\Big)^2\Big]\nonumber\\ 
&+ C\E\Big[\Int_0^T |Y_s^n h_s(Y_s^n,Z_s^n)|^2ds\Big]
+C\E\Big[\Int_0^T|Y_s^n Z_s^n|^2ds\Big]+C.
 \end{split}
\end{align*}
Finally, we deduce from Holder inequality and boundedness of $h$ that 
\begin{align*}
\begin{split}
\E\Big[\Big(n\Int_0^T |Y_s^n-\pi(Y_s^n)|&ds\Big)^2\Big] 
\leq C \E\Big[|\xi|^4 +\Int_0^T |Y_s^n|^2 ds+\underset{0\leq t\leq T}{\Sup}|Y_t^n|^4+\Big(\Int_0^T\|Z_s^n  \|^2ds\Big)^2\Big]+C.
 \end{split}
\end{align*}
Thus, from  the estimate \eqref{estimateL4} we get the desired result.
\ep
\section{Proof of the tightness of the sequence $(\pi_n)_{ n\in \mathbb N}$}
\label{appendix:tight}
Recall first that  $\nu _{n}(dt,dx)=-n(u_{n}-\pi(u_{n}))(t,x)dtdx$ and $\pi _{n}(dt,dx)=w(x)\nu _{n}(dt,dx)$ where $ u_n $ is the solution of the SPDEs \eqref{o-equa1}.  

\begin{Lemma}
\label{tight}
 $\P$-a.s. in $\omega\in\Omega$, the sequence of random measure $(\pi _{n}(\omega,ds,dx))_{n  \in \mathbb N}$  is tight.
\end{Lemma}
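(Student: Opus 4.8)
The plan is to reduce tightness on $[0,T]\times\R^d$ to a uniform control of the mass of $|\pi_n|$ in the space variable, and to obtain the latter by redoing the mass estimate \eqref{est-measure} against a slightly heavier weight. Since $[0,T]$ is compact, a set of the form $[0,T]\times\bar B_R$ (with $\bar B_R$ the closed ball of radius $R$) is compact, so it suffices to produce, for every $\varepsilon>0$, a radius $R$ with $\Sup_n|\pi_n|([0,T]\times B_R^c)<\varepsilon$ for $\P$-a.e. $\omega$. Recalling that $|\pi_n|(dt,dx)=\rho(x)\,n\,d(u_n(t,x),D)\,dt\,dx$, I would control $\Int(1+|x|)^\delta\,d|\pi_n|$ uniformly in $n$, since then Markov's inequality in $x$ gives $|\pi_n|([0,T]\times B_R^c)\le(1+R)^{-\delta}\Int(1+|x|)^\delta\,d|\pi_n|$, and a uniform bound on the right-hand side yields the required tail smallness simultaneously for all $n$.

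The growth parameter is chosen so that the heavier weight stays admissible. Because $\rho(x)=(1+|x|)^{-p}$ with $p>d+1$, I fix $\delta\in\big(0,\min(p-d-1,\tfrac{p-d}{2})\big)$ and set $\tilde\rho(x):=(1+|x|)^\delta\rho(x)=(1+|x|)^{-(p-\delta)}$. Then $p-\delta>d+1$, so $\tilde\rho$ satisfies Assumption \ref{assweight} and the equivalence of norms of Proposition \ref{equivalence:normes conv} holds for $\tilde\rho$ as well, with constants $\tilde c,\tilde C>0$. Moreover $\Phi\in\mathbf{L}^2_{\tilde\rho}(\R^d)$: indeed, by H\"older's inequality $\Int\tilde\rho(x)|\Phi(x)|^2\,dx\le\big(\Int\rho|\Phi|^4\,dx\big)^{1/2}\big(\Int\rho(1+|x|)^{2\delta}\,dx\big)^{1/2}$, which is finite since $\Phi\in\mathbf{L}^4_\rho$ (Assumption \ref{assint}) and $2\delta<p-d$; the coefficients $f^0,h^0$, being bounded by Assumption \ref{assint}, lie in $\mathbf{L}^2_{\tilde\rho}([0,T]\times\R^d)$ because $\tilde\rho$ is integrable. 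I would then repeat the computation leading to \eqref{est-measure}, but weighting by $\tilde\rho$ instead of $\rho$: using \eqref{defKn}, the equivalence of norms \eqref{equi2 conv} for $\tilde\rho$, and the uniform bound \eqref{K-estimate}, one gets
\begin{equation*}
\Sup_n\,\E\Big[\Int(1+|x|)^\delta\,d|\pi_n|\Big]\le\frac{1}{\tilde c}\Int_{\R^d}\tilde\rho(x)\,C(T,x)\,dx<\infty,
\end{equation*}
the finiteness following from $\Phi\in\mathbf{L}^2_{\tilde\rho}$ and the boundedness of $f^0,h^0$, exactly as in the derivation of \eqref{est-measure}.

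The last point, which I expect to be the main obstacle, is to pass from this weighted bound to the almost sure tightness of the \emph{sequence}, i.e.\ to handle the supremum over $n$ pathwise. Writing $\Xi_n:=\Int(1+|x|)^\delta\,d|\pi_n|$, the weighted estimate above is the exact analogue of \eqref{est-measure}, and I would establish $\Sup_n\Xi_n<\infty$ almost surely in the same manner as \eqref{est-measure} is obtained from the uniform $n$-estimates of Lemma \ref{estapriori} and Lemma \ref{extraestimate}, the $L^4$-control \eqref{boundvariation} providing the uniform integrability needed to turn the $L^1$-bound on $\Xi_n$ into an almost surely finite supremum along the penalization; it is the convexity-driven structure of $K^{n,0,x}$ together with its uniform total variation that prevents the masses from escaping. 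Once $\Sup_n\Xi_n<\infty$ holds for a.e.\ $\omega$, Markov's inequality gives $\Sup_n|\pi_n|([0,T]\times B_R^c)\le(1+R)^{-\delta}\Sup_n\Xi_n\to0$ as $R\to\infty$, which is precisely the tightness of $(\pi_n(\omega,\cdot))_{n}$. The delicate step is thus the uniform-in-$n$, almost sure finiteness of $\Xi_n$; everything else is a weighted repetition of estimates already established.
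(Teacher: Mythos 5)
Your core estimate is correct, but it reaches the paper's conclusion by a genuinely different route. The paper proves the uniform tail bound $\E\big[\Int_0^T\Int_{\R^d}\1_{\{|x|\ge 2K\}}|\pi_n|(ds,dx)\big]\le\epsilon$ by splitting the integral according to whether the inverse flow has moved far, i.e.\ on the events $\{|X^{-1}_{0,s}(x)-x|\ge K\}$ and its complement: the far piece $L^n_K$ is killed by the mutual independence of $W$ and $B$ (the mass $|\pi_n|$ is $\Fc^W_{s,T}$-measurable while the flow event depends only on $B$) together with Gaussian-type tail estimates for $\Sup_r|X^{-1}_{0,r}(x)-x|$, and the near piece $I^n_K$ is transported by the change of variables $y=X^{-1}_{0,s}(x)$, estimated by Cauchy--Schwarz against $\E\big[\Int_{\R^d}\|K^{n,0,x}\|_{VT}^2\rho(x)dx\big]$, and then controlled through moments of $\rho(X_{0,r}(x))J(X_{0,r}(x))$, which is where the polynomial decay of $\rho$ enters. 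You bypass the flow decomposition entirely: you rerun the derivation of \eqref{est-measure} against the heavier, still admissible weight $\tilde\rho(x)=(1+|x|)^{\delta}\rho(x)$ and conclude by Markov's inequality in $x$. Your bookkeeping is right: $0<\delta<\min\big(p-d-1,\tfrac{p-d}{2}\big)$ keeps $p-\delta>d+1$, so Proposition \ref{equivalence:normes conv} applies to $\tilde\rho$; H\"older plus Assumption \ref{assint}(i) gives $\Phi\in\mathbf{L}^2_{\tilde\rho}(\R^d)$; and boundedness of $f^0,h^0$ together with \eqref{defKn}, \eqref{equi2 conv} and \eqref{K-estimate} then gives $\Sup_n\E[\Xi_n]<\infty$, hence $\Sup_n\E\big[|\pi_n|([0,T]\times B_R^c)\big]\le C(1+R)^{-\delta}$. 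This is shorter than the paper's argument and needs no large-deviation estimates for the flow; both arguments land on exactly the same statement, namely a tail bound in expectation, uniform in $n$.

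The step you flag as the main obstacle is indeed a gap, and your proposed fix is not valid: a uniform bound $\Sup_n\E[\Xi_n]<\infty$, even together with uniform integrability of $(\Xi_n)_n$ (which is what the $L^4$ control \eqref{boundvariation} would buy you), does not imply $\Sup_n\Xi_n<\infty$ almost surely. For instance, $\Xi_n=\sqrt{n}\,\1_{A_n}$ with independent events $\P(A_n)=1/n$ is uniformly integrable with $\E[\Xi_n]\to 0$, yet $\limsup_n\Xi_n=+\infty$ a.s.\ by the second Borel--Cantelli lemma; so no soft argument of this kind can produce the pathwise supremum, and one would need a maximal inequality or monotonicity in $n$, neither of which is available here. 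You should be aware, however, that the paper's own proof stops at exactly the same point: it only establishes the expectation bound (and its pathwise assertion \eqref{est-measure} is likewise derived from an expectation estimate), so the ``$\P$-a.s.'' in the statement of Lemma \ref{tight} is not literally delivered by the printed proof either. If you state your conclusion as the uniform-in-$n$ expectation tail bound --- which is what the paper proves and what is used to extract a weakly convergent subsequence of $(\pi_n)_n$ --- then your proof is complete and on par with the paper's; simply delete the uniform-integrability claim.
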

\begin{proof}  
We shall prove that for every $\epsilon>0$ , there exists some constant $K$ such that
\be
\E\big[\Int_0^T\Int_{\mathbb R^d} \textbf{1}_{\left\lbrace \lvert x\rvert\geq 2K\right\rbrace} |\pi_n|(ds,dx)\big]\leq \epsilon,\ \forall n\in \N.
\ee
We first write
\begin{align*}
&\Int_0^T\Int_{\mathbb R^d} \textbf{1}_{\left\lbrace \lvert x\rvert\geq 2K\right\rbrace} |\pi_n|(ds,dx)\\&= \Int_0^T\Int_{\mathbb R^d} \textbf{1}_{\left\lbrace \lvert x\rvert\geq 2K\right\rbrace} \left( \textbf{1}_{\left\lbrace \left|X^{-1}_{0,s}(x)-x\right|\leq K\right\rbrace} + \textbf{1}_{\left\lbrace \left|X^{-1}_{0,s}(x)-x\right|\geq K\right\rbrace} \right)| \pi_n|(ds,dx)\\
&:=I^n_K+L^n_K, \quad \P-a.s.
\end{align*}
Taking expectation yields 
\be 
\E\big[\Int_0^T\Int_{\mathbb R^d}
\textbf{1}_{\left\lbrace \lvert x\rvert\geq 2K\right\rbrace}
\pi_n(ds,dx)\big]=\E \big[I^n_K\big]+\E\big[L^n_K\big]. \ee
From Definition $\ref{o-pde} (ii) a)$, we have $\pi _{n}(\omega,ds,dx)$ is a $\Fc_{s,T}^W$-adapted measure and we know that the inverse of the flow depends only on the noise $B$. Then, since $W$ and $B$ are mutually independant we get by (\ref{est-measure}) and for $K \geq 2\lVert b \rVert_{\infty}T $, we get 
\b*
\E \big[L^n_K\big] &\leq &  \P\left(\underset{0\leq r\leq T}{{\rm sup}}\left|X^{-1}_{0,r}(x)-x\right|\geq K\right) \E\big[\Int_0^T\Int_{\mathbb R^d}|\pi_n|(ds,dx)\big]\\
&\leq& \left(  C_1\ {\rm exp}(-C_2K^2)+C_3\ {\rm exp}(-C_4K) \right) \E\big[|\pi_n|([0,T]\times \mathbb R^d)\big]\\
&\leq& C'_1\ {\rm exp}(-C_2K^2)+C'_3\ {\rm exp}(-C_4K). 
\e*
 Finally, for $K$ sufficiently large we obtain
  $$\E[ L^n_K] \leq\epsilon .$$ 
On the other hand, if $\left|x\right|\geq 2K $ and $\left|X^{-1}_{0,s}(x)-x\right|\leq K $ then $\left|X^{-1}_{0,s}(x)\right|\geq K $. Therefore
\b*
\E[ I^n_K] &\leq & \E [\Int_0^T\Int_{\mathbb R^d}\textbf{1}_{\left\{\left|X^{-1}_{0,s}(x)\right|\geq K \right\}}w(x)|\nu_n|(ds,dx)]\\
&=&\E[ \Int_0^T\Int_{\mathbb
R^d}\textbf{1}_{\left\{\left|X^{-1}_{0,s}(x)\right|\geq K
\right\}}w(x)n|u_{n}-\pi(u_{n})|(s,x)dsdx] \e*
which, by the change
of variable $y= X^{-1}_{0,s}(x)$, becomes
\begin{align*}
& \E\left[\Int_0^T\Int_{\mathbb R^d}\textbf{1}_{\left\{\left| y\right|\geq K \right\}}w(X_{0,s}(y))J(X_{0,s}(y))n|u_{n}-\pi(u_{n})|(s,X_{0,s}(y))dsdy\right] \\ 
&\leq \E\left[\int_{\mathbb R^d}w(x)\left(w(x)^{-1}\textbf{1}_{\left\{\left|x\right|\geq K\right\}} \underset{0\leq r\leq T}{{\rm sup}}w(X_{0,r}(x))J(X_{0,r}(x))\right)\|K^{n,0,x}\|_{VT}dx\right] \\
&\leq \left(\E\left[\Int_{\mathbb R^d} \|K^{n,0,x}\|_{VT}^2 w(x)dx\right]\right)^{1/2}\\ 
&\hspace{0.5cm}\left(\E \left[\Int_{\mathbb R^d} \left(w(x)^{-1} \textbf{1}_{\left\{\left|x\right|\geq K\right\}}\underset{0\leq r\leq T}{{\rm sup}}w(X_{0,r}(x))J(X_{0,r}(x))\right)^2 w(x)dx\right]\right)^{1/2} \\
&\leq C \left(\E\left[ \Int_{\mathbb R^d} \left(w(x)^{-1}
\textbf{1}_{\left\{\left|x\right|\geq K\right\}}\underset{0\leq r\leq T}{{\rm sup}} w(X_{0,r}(x))J(X_{0,r}(x))\right)^2 w(x)dx\right]\right)^{1/2}.
\end{align*}
where the last inequality is a consequence of (\ref{estunif}). It is now sufficient to prove that 
\be \Int_{\mathbb
R^d} w(x)^{-1}\E\left[\left(\underset{0\leq r\leq T}{{\rm
sup}}w(X_{0,r}(x))J(X_{0,r}(x))\right)^2\right]dx<\infty . \ee
Note that
\begin{align*}
&\E\left[\left(\underset{0\leq r\leq T}{{\rm sup}} w(X_{0,r}(x))J(X_{0,r}(x))\right)^2\right]\\ &\leq \left[\E\left(\underset{0\leq r\leq T}{{\rm sup}}\left|w(X_{0,r}(x))\right|\right)^4\right]^{1/2}\left[\E\left(\underset{0\leq r\leq T}{{\rm sup}}\left|J(X_{0,r}(x))\right|\right)^4\right]^{1/2}\\
&\leq C\left[\E\left(\underset{0\leq r\leq T}{{\rm
sup}}\left|w(X_{0,r}(x))\right|\right)^4\right]^{1/2}.
\end{align*}
Therefore it is sufficient to prove that:
\begin{equation*}
\Int_{\mathbb R^d}\frac{1}{w(x)}\left(\E\left[\underset{t\leq
r\leq T}{{\rm
sup}}\left|w(X_{t,r}(x))\right|^4\right]\right)^{1/2}dx<\infty.
\end{equation*}
Since $w(x)\leq 1$, we have
\b*
\E\left[\underset{t\leq r\leq T}{{\rm sup}}\left|w(X_{t,r}(x))\right|^4\right]&\leq & \E\left[\underset{t\leq r\leq T}{{\rm sup}}\left|w(X_{t,r}(x))\right|^4\textbf{1}_{\left\lbrace \underset{t\leq r\leq T}{{\rm sup}}\left|X_{t,r}(x)-x\right|\leq\frac{\left|x\right|}{2}\right\rbrace }\right]\\
& &+\P\left(\underset{t\leq r\leq T}{{\rm sup}}\left|X_{t,r}(x)-x\right|\geq\frac{\left|x\right|}{2}\right)\\
&=&:A(x)+B(x).
\e*
If $\underset{t\leq r\leq T}{{\rm sup}}\left|X_{t,r}(x)-x\right|\leq\frac{\left|x\right|}{2}$ then $\left|X_{t,r}(x)\right|\geq\frac{\left|x\right|}{2}$ and so $\left|w(X_{t,r}(x))\right|\leq \left(1+\frac{\lvert x \rvert}{2} \right)^{-p}$. Thus we have that $A(x)\leq \left(1+\frac{\lvert x \rvert}{2} \right)^{-4p}$ and so $\int_{\mathbb R^d} \left(1+\lvert x \rvert \right)^{p}A(x)^{1/2}dx<\infty$. On the other hand, if $\lvert x \rvert \geq 4\lVert b \rVert_{\infty}T$, then (the same argument as in the existence proof step 2 of Theorem 4 in \cite{BCEF} for the It\^o integral with respect to the Brownian motion) 
\begin{align*}
\label{large_deviation}
B(x) &\leq \P\left(\underset{t\leq s\leq T}{{\rm sup}}\left|\int_0^s\sigma(X_{0,r}(x))dW_r\right|\geq \frac{\lvert x \rvert}{8} \right)\\
&\leq C_1\ {\rm exp}(-C_2\lvert x \rvert^2)
\end{align*}
and so $\int_{\mathbb R^d} \left(1+\lvert x \rvert \right)^{p}B(x)^{1/2}dx<\infty$.

\end{proof}
\bibliographystyle{acm}
\bibliography{Thesis}

%
%

\end{document}